\newtheorem{thm}{Theorem}[section]
\newtheorem{cor}[thm]{Corollary}
\newtheorem{lem}[thm]{Lemma}
\newtheorem{prop}[thm]{Proposition}
\newtheorem{rem}[thm]{Remark}
\newtheorem{defn}[thm]{Definition}
\newtheorem{example}[thm]{Example}
\newtheorem*{thmletter}{Theorem 0}
\numberwithin{equation}{section}
\newcommand{\ceil}[1]{\left\lceil {#1} \right\rceil}
\newcommand{\floor}[1]{\left\lfloor {#1} \right\rfloor}
\newcommand{\Span}{\operatorname{span}}  
\newcommand{\Div}{\operatorname{div}}
\begin{document}

\begin{frontmatter}



\title{Weierstrass Semigroups From a Tower of Function Fields 
	Attaining the Drinfeld-Vl{\u{a}}du{\c{t}} Bound
	\tnoteref{mytitlenote}
}
\tnotetext[mytitlenote]{This work is partly supported by the NSFC (11701317, 11531007, 11801303) and Tsinghua University Start-up fund. This work is also partly supported by Guangzhou Science and Technology Program (201607010144) and the Natural Science Foundation of Shandong Province of China (ZR2016AM04, ZR2019QA016).}

\author[mymainadress]{Shudi Yang}
\ead{yangshudi@qfnu.edu.cn}
\address[mymainadress]{School of Mathematical	Sciences, Qufu Normal University, Shandong 273165, P R China }

\author[mysecondaryaddress]{Chuangqiang Hu\corref{mycorrespondingauthor}}
\cortext[mycorrespondingauthor]{Corresponding author}
\ead{huchq@tsinghua.edu.cn}

\address[mysecondaryaddress]{Yau Mathematical Sciences Center, Tsinghua University, Peking 100084, P R China}
\begin{abstract}
For applications in algebraic geometric codes, an explicit description of bases of Riemann-Roch spaces
of divisors on function fields over finite fields is needed. We investigate the third function field $ F^{(3)} $ in a tower of Artin-Schreier extensions  described by Garcia and
Stichtenoth reaching the Drinfeld-Vl{\u{a}}du{\c{t}}
bound. We construct bases for the related Riemann-Roch spaces on $ F^{(3)} $ and present some basic properties of divisors on a line. From the bases, we
explicitly calculate the Weierstrass semigroups and pure gaps at several places on $  F^{(3)} $. All of these results can be viewed as a generalization of the previous work done by Voss and H\o{}holdt (1997).
\end{abstract}

\begin{keyword}
Riemann-Roch space, tower of function fields, Weierstrass semigroup, pure gap.

\MSC  11R58 \sep 14H55 \sep 94B27 

\end{keyword}

\end{frontmatter}

\section{Introduction}
Goppa constructed error-correcting linear codes by using tools from algebraic geometry (AG): a non-singular, projective, geometrically irreducible, algebraic curve $\mathcal{X}$ of genus $g$ defined over $\mathbb{F}_{q}$, the finite field with $q$ elements, and two rational divisors $D$ and $G$ on $\mathcal{X}$. These divisors are chosen in such a way that they have disjoint supports, and $D$ is equal to a sum of pairwise distinct rational places, $D=P_{1}+P_2+\cdots+P_{n}$. The algebraic geometric code is defined as
\[C_{\mathcal{L }}( D,G):=\Big\{\, (f(P_{1}), f(P_{2}),\cdots, f(P_{n})) \,\Big|\,f\in\mathcal{L}(G)\,\Big\} ,\]
where $\mathcal{L}(G)$ denotes the Riemann-Roch space associated to $ G $, see \cite{stichtenoth2009algebraic} as general references for all facts concerning algebraic geometric codes.

There are several problems arising from the construction of AG codes with good parameters.
\begin{itemize}
	\item Choose a function field $F$ with many rational places and small genus.
	\item Describe the places (especially the rational places) of $F$ explicitly.
	\item Determine a basis for the Riemann-Roch space of a given divisor. 
	\item Determine the minimal distance of AG codes.
\end{itemize}

In 1995 and 1996, Garcia and Stichtenoth \cite{Garcia1995,Garcia1996O} showed that two families of explicitly described curves also achieve the Drinfeld-Vl{\u{a}}du{\c{t}} bound $ A(q^2) = q-1 $. These curves are presented in the language of function
fields, specifically as towers of extensions of the rational function field $ K(x_1) $, where $  K:= \mathbb{F}_{q^2} $. One example they gave in \cite{Garcia1996O} is the tower 
$ \mathcal{T}=(\mathcal{T}_1 , \mathcal{T}_2 ,  \cdots,\mathcal{T}_n ) $ that has description as the following sequence of Artin-Schreier extensions:
\[\mathcal{T}_1 = K(x_1),\]
and for $ 1 \leqslant k \leqslant n-1 $, 
\[\mathcal{T}_{k+1} = \mathcal{T}_{k}(x_{k+1}),\]
where \begin{align*}
x_{k+1}^q+x_{k+1}=\dfrac{x_k^q}{x_k^{q-1}+1 }. 
\end{align*}
Identifying the generator matrix for one-point AG codes constructed
on this tower requires the determination of a basis for the
vector spaces $\{\mathcal{L}(rP)|0\leqslant r \in \mathbb{Z}\}$, which comprise functions having
poles only at a specified point $ P $. Let $ P_{\infty}^{(1)} $ denote the unique pole of $ x_1 $ in $ \mathcal{T}_1 $ and $ P_{\infty}^{(n)} $
the unique place in $ \mathcal{T}_n $ lying above  $ P_{\infty}^{(1)} $. In \cite{Aleshnikov1999}, the authors constructed basis for these spaces, using techniques borrowed from algebraic number theory. Later, Aleshnikov {\emph{et al.}} \cite{Aleshnikov2001On} gave a description of how places split in this asymptotically optimal tower of function fields and they provided
an exact count of the number of places of degree one.

Our interest here is in the second tower of Artin-Schreier extensions over $ K $. According to \cite{Garcia1995}, the AG codes derived from these function fields have better coefficients in excess of Weil bound. We introduce the function fields of this tower in the following way. 
\begin{defn}[\cite{Garcia1995}, Definition 0.1]\label{def:tow}
	Let $ F^{(1)} := K(x_1) $ be the rational function field over $ K $. For $ i \geqslant 1 $ let 
	\[ 	F^{(i+1)} := F^{(i)} (z_{i+1}) , \]
	where $ z _ {i+1}  $ satisfies the equation
	\[ z_{i+1}^q + z_{i+1} =x_i^{q+1 } , \]
	with 
	\[   	x_{i+1} := \frac{z_{i+1} }{x_{i}} \in F^{(i+1)} . \]
\end{defn}

Another construction of a recursive tower is due to Bassa, Beelen, Garcia and Stichtenoth \cite{Bassa2015}. Using this tower they obtained an improvement of the Gilbert-Varshamov (GV) bound for all non-prime fields $ \mathbb{F}_l $ with $ l \leqslant 49 $, except possibly $ l = 125 $ in \cite{Bassa2014}. The curve associated to the second function fields of this tower is called BBGS curve according to \cite{Bassa2015}. Recently, Hu \cite{Hu2017} presented multi-point AG codes overstepping the GV bound from BBGS curves in \cite{Bassa2015}.

In the work \cite{Garcia1995}, the authors pointed out all the rational places on such function fields. However, it is really complicated to compute the bases of Riemann-Roch spaces related to these function fields.
The second function field $  {F}^{(2)} $ is the Hermitian function field, which is known to be the unique function field over $ K $ of genus $ g = q(q - 1)/2  $ that attains the Weil bound \cite{Ruck1994}. The basic fact about Hermitian function field is that there exists only one rational place $P_{\infty} $ on the infinity. A basis of the Riemann-Roch space attached to this place can be given by the monomial polynomials, i.e.,     
\[ \mathcal{L}(rP_{\infty} ) = \Span \Big\{\, x_1^i z_2^j \,\Big|\, 0 \leqslant i,\,\,  0 \leqslant j\leqslant q-1, \,\, iq+j(q+1)\leqslant r \,\Big\}. \]   
See Lemma 6.4.4 in \cite{stichtenoth2009algebraic} for more information. Now we come to the third function field $ F^{(3)} $. Let us introduce two rational places  $ P^{(3)} $ and $ Q^{(3)} $ and two divisors $ S_0^{(3)} $ and $ S_1^{(3)} $. They can be expressed formally as  
\begin{enumerate}
	\item $ P^{(3)} = (x_1 = \infty )$,
	
	\item $ Q^{(3)} = (x_1 = x_2 =x_3 =0 )$,
	
	\item $ S_0^{(3)} = (x_1 = x_2 =0, x_3 \not =0 )$, and
	
	\item  $ S_1^{(3)} = (x_1 = 0, x_2 \not =0 )$.
\end{enumerate}
Actually, the divisor $  S_0^{(3)} $ (resp. $  S_1^{(3)} $) contains $ q-1 $ places in $ F^{(3)} $ denoted by $  S_{0,\mu}^{(3)} $  (resp. $  S_{1,\mu}^{(3)} $) for $ 1\leqslant \mu \leqslant q-1 $. This means that $  S_{0,\mu}^{(3)} $  (resp. $  S_{1,\mu}^{(3)} $) is the unique zero of $  z_3-\alpha_\mu $  (resp. $  z_2-\alpha_\mu $), where $\alpha_\mu \in \mathbb{F}_{q^2}^*  $ and $ \alpha_\mu^{q-1}+1=0 $. In other words, we have the following decompositions used when we consider divisors on a line:
\[S_0^{(3)}= \sum_{\mu=1}^{q-1}  S_{0,\mu}^{(3)}, \quad  S_1^{(3)}= \sum_{\mu=1}^{q-1}  S_{1,\mu}^{(3)} .\] 

In general, it is a hard problem to determine a basis for a Riemman-Roch space. However, Voss and H\o{}holdt \cite{Voss1997} gave a basis of the Riemann-Roch space $ \mathcal{L}( t S_1^{(3)} + u P^{(3)} ) $.
\begin{prop} [\cite{Voss1997}, Theorem 4.3]\label{prop:basi1}
	The set 
	\[ B(t,u): = \Big\{ \,x_1 ^{i_1 } x_2 ^{i_2 } z_3 ^j\, \Big|\, (i_1, i_2, j)\in I(t,u) \, \Big\} \]
	is a basis of $ \mathcal{L}( t S_1^{(3)} + u P^{(3)} ) $ over $ K $, where
	$I(t,u):= I_1(t,u)\cup I_2(t,u)\cup I_3(t,u) $ with
	\begin{align*}
	I_1(t,u) &= \Big\{(i_1,i_2,j) \,|\,0\leqslant i_1,\, 0\leqslant i_2,j\leqslant q\!-\!1, 
	i_2 q\!+\!j(q+1) \leqslant u-i_1 q^2,\,
	i_2 q+j(q+1) \leqslant t+i_1 q \Big\},\\
	I_2(t,u) &= \Big\{(i_1,-i_2,j) \,|\,1\leqslant i_2\leqslant q,\, i_1 \geqslant i_2 q,\, 0\leqslant j\leqslant q-1,\,
	j(q+1) \leqslant u + i_2 q - i_1 q^2,\,\\
	&\phantom{= \{(i_1,-i_2,j)  \}}
	j(q+1) \leqslant t + i_2 q + i_1 q,\,
	(i_1-1,-i_2+q,j) \notin I_1(t,u) \Big\}  ,\\
	I_3(t,u) &= \Big\{(-i_1,i_2,j) \,|\,1\leqslant i_1,\,1\leqslant i_2 \geqslant   q-1,\, i_1 \geqslant i_2 q,\, 0\leqslant j\leqslant q-1,\\
	&\phantom{= \{(i_1,-i_2,j) \}} i_2 q+j(q+1) \leqslant u+i_1 q^2,\,
	i_2 q+j(q+1) \leqslant t-i_1 q \Big\}  . 
	\end{align*}
\end{prop}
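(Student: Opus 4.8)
The plan is to prove the statement in four stages --- assembling the local divisor data, checking that every monomial indexed by $I(t,u)$ lies in $\mathcal{L}(tS_1^{(3)}+uP^{(3)})$, proving that $B(t,u)$ is $K$-linearly independent, and proving that it spans --- using throughout the facts from the excerpt: $F^{(2)}=K(x_1,z_2)$ is the Hermitian function field with $z_2^q+z_2=x_1^{q+1}$ and $x_2=z_2/x_1$; $F^{(3)}/F^{(2)}$ has degree $q$ with $\{1,z_3,\dots,z_3^{q-1}\}$ an $F^{(2)}$-basis; and $\mathcal{L}(rP^{(2)})=\Span\{x_1^a z_2^b\mid 0\le b\le q-1,\ aq+b(q+1)\le r\}$. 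For the local data, descending through the two Artin-Schreier steps and using $\gcd(q,q+1)=1$, the place $(x_1=\infty)$ stays totally ramified, giving $P^{(3)}$ with $v_{P^{(3)}}(x_1)=-q^2$, $v_{P^{(3)}}(x_2)=-q$, $v_{P^{(3)}}(z_3)=-(q+1)$, while $(x_1=0)$ of $K(x_1)$ splits completely in $F^{(2)}$ (the residue roots of $T^q+T$ are simple and lie in $K$) into $Q^{(2)}$ and the $S_{1,\mu}^{(2)}$; in $F^{(3)}/F^{(2)}$ the place $Q^{(2)}$ splits again, whereas each $S_{1,\mu}^{(2)}$, where $x_2$ has a pole, ramifies totally, so $z_3$ acquires a pole at $S_{1,\mu}^{(3)}$ --- which is exactly why the term $tS_1^{(3)}$ enters. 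This yields a short table of $v_P(x_1),v_P(x_2),v_P(z_3)$ for $P\in\{P^{(3)},Q^{(3)},S_{0,\mu}^{(3)},S_{1,\mu}^{(3)}\}$, together with the remark that $x_1,x_2,z_3$ are regular on $F^{(3)}$ away from these places.

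With the table, membership of a monomial $x_1^{i_1}x_2^{i_2}z_3^j$ becomes a finite list of linear inequalities: $v_{P^{(3)}}\ge -u$, $v_{S_{1,\mu}^{(3)}}\ge -t$ for all $\mu$, and $v_P\ge 0$ at $Q^{(3)}$, at each $S_{0,\mu}^{(3)}$, and at the remaining places. The three index sets correspond to the three sign patterns of $(i_1,i_2)$: in $I_1$ both exponents are $\ge 0$, the only poles are at $P^{(3)}$ and the $S_{1,\mu}^{(3)}$, and the two displayed inequalities are literally $v_{P^{(3)}}\ge -u$ and $v_{S_{1,\mu}^{(3)}}\ge -t$; in $I_2$ the exponent of $x_2$ is negative, so $x_2^{i_2}$ contributes a further pole along $S_1^{(3)}$ while $i_1\ge i_2q$ (together with the exclusion clause) secures regularity at $Q^{(3)}$; in $I_3$ the exponent of $x_1$ is negative, so $x_1^{i_1}$ gains a zero at $P^{(3)}$ but a pole at $Q^{(3)}$, again controlled by $i_1\ge i_2q$ and the displayed inequalities. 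Each case is a short computation against the table.

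For linear independence, write a putative relation among the monomials in $B(t,u)$ as $\sum_{j=0}^{q-1}f_j z_3^j=0$ with $f_j\in F^{(2)}$; since the $z_3^j$ are $F^{(2)}$-independent, every $f_j=0$, and it remains to see that the monomials $x_1^{i_1}x_2^{i_2}$ in a fixed $z_3^j$-layer are $K$-independent in $F^{(2)}$. For this it suffices that $(i_1,i_2)\mapsto\bigl(v_{P^{(3)}}(x_1^{i_1}x_2^{i_2}),\,v_{Q^{(3)}}(x_1^{i_1}x_2^{i_2})\bigr)$ be injective on the relevant exponent set --- which holds because the valuation vectors of $x_1$ and $x_2$ at $P^{(3)}$ and $Q^{(3)}$ are not proportional --- so that in any nontrivial relation the summand of least $P^{(3)}$-valuation, with ties broken by least $Q^{(3)}$-valuation, cannot be cancelled.

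Finally, spanning amounts to $|I(t,u)|=\ell(tS_1^{(3)}+uP^{(3)})$. One route is to count lattice points in $I_1,I_2,I_3$ and compare with the Riemann-Roch value, using the genus of $F^{(3)}$ from the Garcia-Stichtenoth tower, the vanishing of the index of speciality for $u$ large, and a direct treatment of the finitely many small cases; a more structural alternative is to write $f=\sum_{j=0}^{q-1}f_j z_3^j\in\mathcal{L}(tS_1^{(3)}+uP^{(3)})$ with $f_j\in F^{(2)}$, push the divisor bound on $f$ down to bounds on $\Div(f_j)$, expand each $f_j$ against the Hermitian basis $\{x_1^a z_2^b\}=\{x_1^{a+b}x_2^b\}$, and check that the resulting exponents land in $I(t,u)$. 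I expect the crux to be exactly this step, concretely the role of the conditions that look merely technical: the clause $(i_1-1,-i_2+q,j)\notin I_1(t,u)$ in $I_2$ and the sharp inequalities in $I_3$. In $F^{(2)}$ one has $x_1x_2+x_1^qx_2^q=x_1^{q+1}$, i.e. $x_2+x_1^{q-1}x_2^q=x_1^q$, whence
\[x_1^{i_1}x_2^{-i_2}=x_1^{i_1-1}x_2^{\,q-i_2}+x_1^{i_1-q}x_2^{\,1-i_2},\]
so a monomial with a negative $x_2$-exponent is a combination of two, the first of which may already lie in $I_1$; keeping exactly one representative of each such function --- so that membership is preserved, $B(t,u)$ stays independent, and $|I(t,u)|$ meets the Riemann-Roch dimension on the nose --- is the real content. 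A clean way to organize this is to put $B(t,u)$ in echelon form with respect to a suitable pair of valuations (say $v_{P^{(3)}}$ and $v_{Q^{(3)}}$, or $v_{P^{(3)}}$ and $v_{S_{1,\mu}^{(3)}}$), so that independence and spanning both follow from a single triangularity statement rather than two separate counts.
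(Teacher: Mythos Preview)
The paper does not prove this proposition at all: it is quoted verbatim from Voss--H\o{}holdt \cite{Voss1997} as background, and the paper's own contribution is a \emph{different} basis for the same space (Proposition~\ref{prop:basis} and Corollary~\ref{cor:basis}), built from the monomials $x_1^i w^j v^k$ with $w=x_1^q/x_2$ and $v=x_1^q x_2^{q-1}/x_3$. So your proposal is not a reconstruction of the paper's proof but an attempt to re-derive the Voss--H\o{}holdt result directly.

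Comparing the two strategies: the paper's basis is engineered so that $v_{Q^{(3)}}(x_1^i w^j v^k)=i$, and the index set $\Omega_{r,s,t,u}$ forces the $i$-values to be pairwise distinct. Linear independence is then one line, and spanning is reduced to a pure lattice-point count $\#\Omega_{r,s,t,u}=1-g^{(3)}+\deg G$ (Proposition~\ref{prop:Omega_num}), first for $r$ large via Riemann--Roch and then for general $r$ by a filtration argument. This sidesteps entirely the awkward exclusion clause $(i_1-1,-i_2+q,j)\notin I_1(t,u)$ and the overlapping sign regions $I_1,I_2,I_3$ that you correctly identify as the crux of the Voss--H\o{}holdt description. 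Your plan is sound in outline---the valuation table, the membership check, and the $z_3^j$-layer decomposition for independence are all fine, and you have pinpointed the relation $x_1^{i_1}x_2^{-i_2}=x_1^{i_1-1}x_2^{q-i_2}+x_1^{i_1-q}x_2^{1-i_2}$ that explains why the exclusion clause is needed---but the spanning step is left as a programme rather than an argument, and carrying it out amounts to redoing the combinatorics of \cite{Voss1997}. The paper's point is precisely that choosing $w,v$ instead of $x_2,z_3$ makes that combinatorics disappear.
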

 Voss and H\o{}holdt \cite{Voss1997} succeeded in counting the elements in the set $ B(t,u) $, see Proposition 4.9 in \cite{Voss1997}. The set $ I(t,u) $ above is constructed explicitly, but it seems complicated and unnatural.
  So this work is intended as an attempt to motivate the result of \cite{Voss1997}. Precisely speaking, we would give another natural and brief exposition of the bases for various Riemann-Roch spaces $ \mathcal{L}(rQ^{(3)} + s S_0^{(3)}+t S_1^{(3)} + u P^{(3)}  ) $ in $ F^{(3)} $ and $\mathcal{L}( r Q^{(3)}+ \sum_{\mu=1}^{q-1} s_{\mu}  S_{0,\mu}^{(3)}  + \sum_{\nu=1}^{q-1}  t_{\nu} S_{1,\nu}^{(3)}+ u P^{(3)}) $. This is briefly described below.
  
  \begin{thmletter}\label{thm:basis}
  	Let $ w=  {x_1^{q}}/{x_2 } $ and $ v= {x_1^{q }x_2^{q-1} }/{  x_3  } $. 
  	\begin{enumerate}
  		\item The elements $ x_1^i w^j v^k $ with $ (i,j,k) $ satisfying certain conditions form a basis of the Riemann-Roch space $\mathcal{L}(r Q^{(3)} + s S_0^{(3)}+t S_1^{(3)} + u P^{(3)} )$.
  		\item The elements 
  		$ 	x_1^{i} \prod_{\nu=1}^{q-1} {(z_2 -\alpha_\nu)^ {j_\nu} } \prod_{\mu=1}^{q-1} {(z_3 -\alpha_\mu)^ {k_\mu} } $ with $ (i, j_1,\cdots,j_{q-1} , k_1,\cdots,k_{q-1} )$ satisfying certain conditions form a basis of the Riemann-Roch space $\mathcal{L}( r Q^{(3)}+ \sum_{\mu=1}^{q-1} s_{\mu}  S_{0,\mu}^{(3)}  + \sum_{\nu=1}^{q-1}  t_{\nu} S_{1,\nu}^{(3)}+ u P^{(3)}) $ .
  	\end{enumerate} 
  \end{thmletter}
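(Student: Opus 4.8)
\emph{Strategy.} My plan is to reduce everything to explicit divisor computations together with a descent through the two Artin--Schreier steps $F^{(1)}\subset F^{(2)}\subset F^{(3)}$. A short manipulation of the defining equations gives
\[
w=\frac{x_1^q}{x_2}=z_2^{q-1}+1=\prod_{\nu=1}^{q-1}(z_2-\alpha_\nu),\qquad
v=\frac{x_1^qx_2^{q-1}}{x_3}=\frac{z_2^q}{z_3}=\prod_{\nu=1}^{q-1}(z_2-\alpha_\nu)\cdot\prod_{\mu=1}^{q-1}(z_3-\alpha_\mu),
\]
so that $v/w=z_3^{q-1}+1=\prod_{\mu}(z_3-\alpha_\mu)$. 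Thus the products occurring in part~(2) are precisely the factorizations of $w$ and of $v/w$, and part~(1) is the special case of part~(2) in which all the $s_\mu$, and all the $t_\nu$, are taken equal. I would therefore prove the two statements in parallel, the first being a coarsening of the second.

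\emph{Step 1: principal divisors.} Using the ramification of the tower from \cite{Garcia1995} --- $P^{(3)}$ is totally ramified over the pole of $x_1$, $Q^{(3)}$ over its zero, while the places of $S_0^{(3)}$ and $S_1^{(3)}$ lie over the splitting locus --- I would compute, place by place, the principal divisors in $F^{(3)}$ of $x_1$, of each $z_2-\alpha_\nu$ and each $z_3-\alpha_\mu$, and hence of $w$ and $v$. The outcome is that all of these divisors are supported only on $P^{(3)}$, $Q^{(3)}$, the $q-1$ places $S_{0,\mu}^{(3)}$ and the $q-1$ places $S_{1,\nu}^{(3)}$. Consequently every monomial $x_1^{\,i}w^jv^k$, and more generally $x_1^{\,i}\prod_\nu(z_2-\alpha_\nu)^{j_\nu}\prod_\mu(z_3-\alpha_\mu)^{k_\mu}$, is regular away from those places; hence it lies in the Riemann-Roch space under consideration if and only if finitely many valuation inequalities hold --- four in case~(1), at $P^{(3)},Q^{(3)},S_0^{(3)},S_1^{(3)}$, and $2q$ in case~(2), at $P^{(3)},Q^{(3)}$ and at each $S_{0,\mu}^{(3)},S_{1,\nu}^{(3)}$. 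Writing these out with the valuations of $x_1,w,v$ (which are linear forms in the exponents) substituted in yields exactly the ``certain conditions'' of the statement, and shows that every element of the proposed generating set does belong to the space.

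\emph{Step 2: independence and spanning.} Linear independence follows from Step~1: in case~(1) the monomials $x_1^{\,i}w^jv^k$ have pairwise distinct pole orders at $P^{(3)}$ --- a computation gives $v_{P^{(3)}}(x_1^{\,i}w^jv^k)=-q^2i-(q^3-q)j-(q-1)(q+1)^2k$, whose residue modulo $q^2$ already separates the admissible pairs $(j,k)$, while the term $-q^2i$ separates the rest --- so no nontrivial $K$-combination can vanish; in case~(2) one argues the same way, now using that the valuations at $P^{(3)},Q^{(3)}$ and at the places of $S_0^{(3)},S_1^{(3)}$ together recover the entire exponent vector. For spanning, take $f$ in the Riemann-Roch space and descend. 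Over $F^{(3)}/F^{(2)}$ write $f=\sum_{k=0}^{q-1}f_k\,(v/w)^k$ with $f_k\in F^{(2)}$ (legitimate since $\{1,z_3^{-1},\dots,z_3^{-(q-1)}\}$, hence $\{(v/w)^k\}$, is an $F^{(2)}$-basis of $F^{(3)}$). As $z_3^q+z_3=x_2^{q+1}$ is an Artin--Schreier relation, at every place $R$ of $F^{(3)}$ lying over a place of $F^{(2)}$ the $q$ summands $f_k(v/w)^k$ have pairwise distinct valuations, whence $v_R(f)=\min_k v_R(f_k(v/w)^k)$, and this forces each $f_k$ into an explicitly determined Riemann-Roch space of $F^{(2)}$. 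Repeating the argument for $F^{(2)}/F^{(1)}$ --- the bottom step, where Riemann-Roch spaces of $K(x_1)$ are spanned by the monomials $x_1^{\,i}$, in the spirit of the monomial description for the Hermitian function field $F^{(2)}$ (Lemma~6.4.4 of \cite{stichtenoth2009algebraic}) --- expresses each $f_k$ through the admissible powers $x_1^{\,i}$ and factors $z_2-\alpha_\nu$ (resp.\ $w^j$), and reassembling the pieces shows $f$ is a $K$-combination of the claimed basis.

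\emph{Main obstacle.} The substantive work, and the only genuinely delicate point, is Step~1: pinning down the exact ramification indices and the exact principal divisors of $x_1$, $w$, $v$ and of every linear factor at all four families of places. Closely tied to this is the ``no cancellation'' fact used in the descent, namely that in $f=\sum_k f_k(v/w)^k$ no two leading terms coincide at the ramified or split places over $(x_1=\infty)$, $(x_1=0)$, $S_0^{(3)}$ and $S_1^{(3)}$. Once these are settled, translating the valuation inequalities into the stated index conditions --- and, as a consistency check, matching the number of basis elements with $\dim\mathcal{L}$ via the Riemann-Roch theorem and the known genus of $F^{(3)}$ --- is routine bookkeeping, being exactly the count carried out by Voss and H\o{}holdt \cite{Voss1997} in the special case $tS_1^{(3)}+uP^{(3)}$.
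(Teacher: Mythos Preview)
Your strategy differs genuinely from the paper's. The paper does not argue by descent through the tower; it proves spanning by a direct lattice-point count. Having observed that $v_{Q^{(3)}}(x_1^iw^jv^k)=i$, the authors take an index set $\Omega_{r,s,t,u}$ in which $j$ and $k$ are \emph{uniquely determined} by $i$ (via ceiling functions), so linear independence is immediate from the distinct $Q^{(3)}$-valuations. Spanning is then obtained by proving $\#\Omega_{r,s,t,u}=1-g^{(3)}+\deg G$ for $r$ large enough, invoking Riemann--Roch, and filtering by $v_{Q^{(3)}}$ to pass to general $r$. That count (Propositions~\ref{prop:Omega_num} and \ref{prop:Omega2}, established through Lemmas~\ref{lem:Omega_reduction}--\ref{lem:Omega_main} and \ref{lem:set_inv2}--\ref{lem:Omeru}) is the heart of the paper and is not the routine bookkeeping you suggest; Voss--H{\o}holdt treated only the divisor $tS_1^{(3)}+uP^{(3)}$, and extending the count to four and then to $2q$ points is precisely the new content.

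Your descent has a concrete gap at the upper Artin--Schreier step. In $F^{(3)}/F^{(2)}$ the place $Q^{(2)}$ is \emph{not} ramified: it splits completely into $Q^{(3)}$ and the $q-1$ places $S_{0,\mu}^{(3)}$. At $Q^{(3)}$ one has $v_{Q^{(3)}}(v/w)=0$ (since $z_3$ has residue $0$ there, so $v/w=z_3^{q-1}+1$ has residue $1$), and the extension is unramified, so every summand $f_k(v/w)^k$ has valuation $v_{Q^{(2)}}(f_k)$ at $Q^{(3)}$, independent of $k$: your ``pairwise distinct valuations'' claim fails outright, and $v_{Q^{(3)}}(f)=\min_k v_{Q^{(3)}}\!\left(f_k(v/w)^k\right)$ need not hold. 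The distinct-valuation trick works only at the totally ramified places $P^{(3)}$ and $S_{1,\nu}^{(3)}$. Consequently the bound $v_{Q^{(3)}}(f)\geqslant -r$ (and likewise the bounds at the $S_{0,\mu}^{(3)}$) cannot be converted into individual pole bounds $v_{Q^{(2)}}(f_k)\geqslant -N_k$, so the $f_k$ do not land in any prescribed Riemann--Roch space of $F^{(2)}$, and the second descent step never gets started. Repairing this would require a separate analysis of the split place, which is essentially the work the paper does by counting; the difficulty you flag as the ``main obstacle'' is real, but it lives in Step~2 rather than Step~1, and your proposed resolution of Step~2 does not go through as written.
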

  It seems that the difficult part is to find enough linearly independent elements of a special form. Fortunately we can find them by considering a natural form of elements and counting these elements. Then we utilize these bases to calculate the Weierstrass semigroups and the pure gap sets at certain places on $  F^{(3)} $. It should be noted that Weierstrass semigroups and pure gaps are of vital use in finding AG codes with good parameters. We refer the reader to the works \cite{Bartoli2017,Bartoli2018Pure,Bartoli2018Kummer,HuYang2016Kummer,Hu2017multi,matthews2001weierstrass,matthews2005weierstrass,YangHu2017,Yang2018Pure} for the technique to improve the parameters of AG codes using Weierstrass semigroups and pure gaps.

The paper is organized as follows. In Section \ref{sec:arith}, we introduce some arithmetic properties of the function fields of the second tower introduced in Definition \ref{def:tow} and construct a basis for the Riemann-Roch space $ \mathcal{L}(rQ^{(3)} + s S_0^{(3)}+t S_1^{(3)} + u P^{(3)}  ) $. Section \ref{sec:divi} is
devoted to investigating the properties of divisors on a line and finding a basis of the Riemann-Roch space $\mathcal{L}( r Q^{(3)}+ \sum_{\mu=1}^{q-1} s_{\mu}  S_{0,\mu}^{(3)}  + \sum_{\nu=1}^{q-1}  t_{\nu} S_{1,\nu}^{(3)}+ u P^{(3)}) $. Finally, we demonstrate the Weierstrass semigroups and pure gaps at certain places on $  F^{(3)} $ in Section \ref{sec:Weiersemipureg}.

\section{The arithmetic properties of the tower}\label{sec:arith}

We start with some notations. Let $ q  $ be a power of a prime $p $ and $ K= \mathbb{F}_{q^{2}} $ be a finite field of cardinality $ q^{2}  $. Let $\mathcal{F} = \Big\{F^{(1)},F^{(2)},F^{(3)},\cdots \Big\}$ be the tower of function fields as defined in Definition \ref{def:tow}. 
We denote by $ P^{(1)}$ and $Q^{(1)}$  the pole and the zero of $ x_1 $ in $ F^{(1)} $ respectively. Let $ \mathbb{P}(F^{(n)}) $ be the set of places of the function field $ F^{(n)}/K $ for $ n \geqslant 1 $. The following diagram Figure \ref{Fig:Sets of places} gives a classification of rational places over $ P^{(1)} $ and $ Q^{(1)}$ in $ \mathbb{P}( F^{(n)} )$ and distinguishes the ramifications of distinct rational places, where $ P^{(n)} $
and $ Q^{(n)} $ are rational places and the divisors $ S_k^{(n)} $ are combined with some rational places for $ n \geqslant 1 $ and $ k\geqslant 0 $. The integers in Figure \ref{Fig:Sets of places} are the ramification indices of related places, for instance, the ramification index $ e(P^{(2)}|P^{(1)} )=q $.
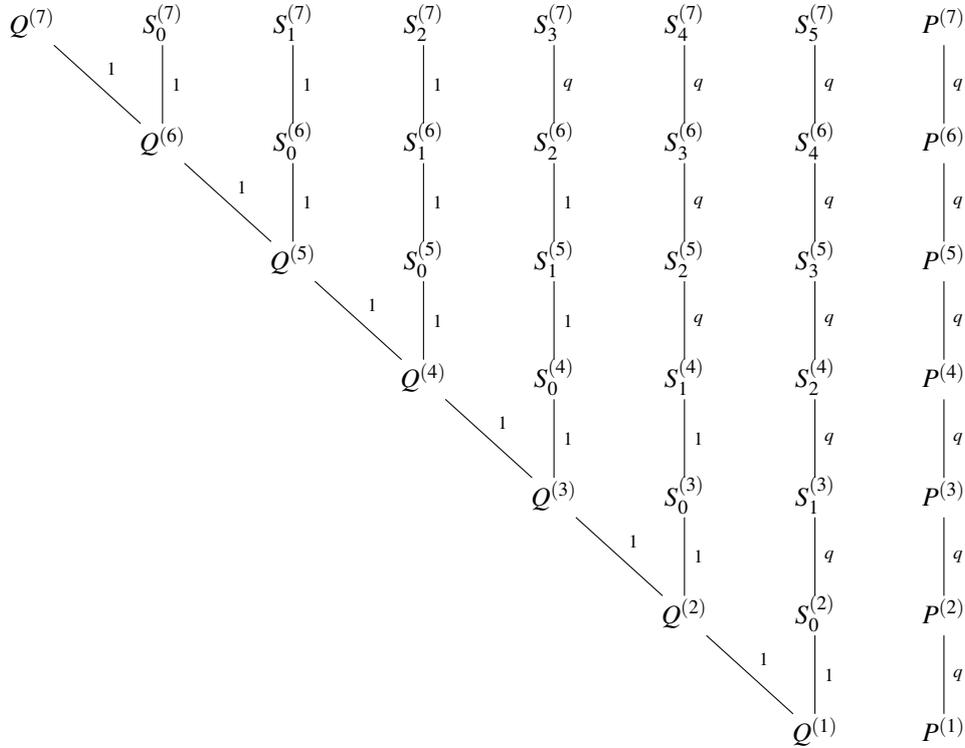
\begin{figure}[H]
	\centering
	\begin{tikzpicture}[description/.style={fill=white,inner sep=2pt}]
	\matrix (m) [matrix of math nodes, row sep=3em,
	column sep=2.5em, text height=1.5ex, text depth=0.25ex]
	{ Q^{(7)} & S_{0}^{(7)} & S_{1}^{(7)} & S_{2}^{(7)} & S_{3}^{(7)} & S_{4}^{(7)} & S_{5}^{(7)} & P^{(7)} \\ 
		&     Q^{(6)} & S_{0}^{(6)} & S_{1}^{(6)} & S_{2}^{(6)} & S_{3}^{(6)} & S_{4}^{(6)} & P^{(6)} \\  
		&             &     Q^{(5)} & S_{0}^{(5)} & S_{1}^{(5)} & S_{2}^{(5)} & S_{3}^{(5)} & P^{(5)} \\
		&             &	           &     Q^{(4)} & S_{0}^{(4)} & S_{1}^{(4)} & S_{2}^{(4)} & P^{(4)} \\  
		&	         &             &             &     Q^{(3)} & S_{0}^{(3)} & S_{1}^{(3)} & P^{(3)} \\
		&	         &             &             &             &     Q^{(2)} & S_{0}^{(2)} & P^{(2)} \\
		&             &             &             &             &             &     Q^{(1)} & P^{(1)} \\};
	\path[-,font=\scriptsize]
	(m-1-1) edge node[auto] {$ 1 $} (m-2-2)
	(m-1-2) edge node[auto] {$ 1 $} (m-2-2)
	(m-1-3) edge node[auto] {$ 1 $} (m-2-3)
	(m-1-4) edge node[auto] {$ 1 $} (m-2-4)
	(m-1-5) edge node[auto] {$ q $} (m-2-5)
	(m-1-6) edge node[auto] {$ q $} (m-2-6)
	(m-1-7) edge node[auto] {$ q $} (m-2-7)
	(m-1-8) edge node[auto] {$ q $} (m-2-8)
	(m-2-2) edge node[auto] {$ 1 $} (m-3-3)
	(m-2-3) edge node[auto] {$ 1 $} (m-3-3)
	(m-2-4) edge node[auto] {$ 1 $} (m-3-4)
	(m-2-5) edge node[auto] {$ 1 $} (m-3-5)
	(m-2-6) edge node[auto] {$ q $} (m-3-6)
	(m-2-7) edge node[auto] {$ q $} (m-3-7)
	(m-2-8) edge node[auto] {$ q $} (m-3-8)
	(m-3-3) edge node[auto] {$ 1 $} (m-4-4)
	(m-3-4) edge node[auto] {$ 1 $} (m-4-4)
	(m-3-5) edge node[auto] {$ 1 $} (m-4-5)
	(m-3-6) edge node[auto] {$ q $} (m-4-6)
	(m-3-7) edge node[auto] {$ q $} (m-4-7)
	(m-3-8) edge node[auto] {$ q $} (m-4-8)
	(m-4-4) edge node[auto] {$ 1 $} (m-5-5)
	(m-4-5) edge node[auto] {$ 1 $} (m-5-5)
	(m-4-6) edge node[auto] {$ 1 $} (m-5-6)
	(m-4-7) edge node[auto] {$ q $} (m-5-7)
	(m-4-8) edge node[auto] {$ q $} (m-5-8)
	(m-5-5) edge node[auto] {$ 1 $} (m-6-6)
	(m-5-6) edge node[auto] {$ 1 $} (m-6-6)
	(m-5-7) edge node[auto] {$ q $} (m-6-7)
	(m-5-8) edge node[auto] {$ q $} (m-6-8)
	(m-6-6) edge node[auto] {$ 1 $} (m-7-7)
	(m-6-7) edge node[auto] {$ 1 $} (m-7-7)
	(m-6-8) edge node[auto] {$ q $} (m-7-8);
	\end{tikzpicture}
	\protect\caption{Ramifications for towers  $F^{(n)}/K $}
	\label{Fig:Sets of places}
\end{figure}

For the convenience of the reader we paraphrase several  results in \cite{Garcia1995}. In the next proposition, we denote by
\[\Div_n(x_n)= \sum_{P \in \mathbb{P}(F^{(n)})} v_P(x_n) P \]
the principal divisor of $ x_n $ in the function field $ F^{(n)} $, where $ v_P $ is the normalized discrete valuation associated with $ P $.
\begin{prop}[\cite{Garcia1995}, Lemma 2.9 and Theorem 2.10]\label{prop:div_deg}
	\label{prop:valuation}
	For $ n\geqslant 1 $, we have the following assertions.
	\begin{enumerate}
		\item	$ \deg (Q^{(n)})= 1 $, $ \deg(P^{(n)})=1 $ and 
		\[ 
		\deg (S_i^{(n)})=\begin{cases} 
		q^{i}(q-1) & \text{ for } 2i+3\leqslant n,\\
		q^{n-i-2}(q-1) &  \text{ for }  2i+3 > n.
		\end{cases}
		\]
		\item   The principle divisor of $ x_n $ in $ F^{(n)} $ is
		\[
		\Div_{n} (x_n) = q^{n-1} Q^{(n)}-\sum_{i=0}^{\floor{\frac{n-3}{2}}} q^{n-2-2i}S_{i}^{(n)}- \sum_{i={\floor{\frac{n-1}{2}}}}^{n-2} S_{i}^{(n)}- P^{(n)}.
		\]
		\item The genus of $ F^{(n)} $ is given by
		\[
		g^{(n)}=\begin{cases}
		q^n +q^{n-1}-q^{\frac{n+1}{2}}-2 q^{\frac{n-1}{2}}+1 & \text{
			when $ n $ is odd,}\\
		q^n +q^{n-1}- \frac{1}{2} q^{\frac{n}{2} +1}-\frac{3}{2} q^{\frac{n}{2}}-q^{\frac{n}{2}-1}+1 &\text{
			when $ n $ is even. }
		\end{cases} 
		\]
		
	\end{enumerate}
\end{prop}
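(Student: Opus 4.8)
The plan is to prove the three assertions together by induction on $n$, the only real tool being the ramification theory of the generalized Artin--Schreier step $F^{(n)}=F^{(n-1)}(z_n)$ with $z_n^q+z_n=x_{n-1}^{q+1}$; this is essentially the argument of Garcia and Stichtenoth \cite{Garcia1995}. The base cases are $n=1$ (the rational function field $K(x_1)$, where $\Div_1(x_1)=Q^{(1)}-P^{(1)}$ and $g^{(1)}=0$) and $n=2$ (the Hermitian function field: the unique ramified place of $F^{(2)}/F^{(1)}$ is $P^{(1)}$, totally ramified with different exponent $(q-1)(q+2)$, while $Q^{(1)}$ splits completely into the $q$ rational places $Q^{(2)},S_{0,1}^{(2)},\dots,S_{0,q-1}^{(2)}$; Riemann--Hurwitz then gives $g^{(2)}=q(q-1)/2$, and comparing valuations in $z_2^q+z_2=x_1^{q+1}$ gives $\Div_2(x_2)=qQ^{(2)}-S_0^{(2)}-P^{(2)}$).

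For the inductive step I would read off the ramification of $F^{(n)}/F^{(n-1)}$ from the pole divisor of $x_{n-1}^{q+1}$, which by the inductive form of $\Div_{n-1}(x_{n-1})$ is supported on $P^{(n-1)}$ and the divisors $S_i^{(n-1)}$, distinguishing cases by $v(x_{n-1})$. Where $v(x_{n-1})=-1$, that is at $P^{(n-1)}$ and at the block of $S_i^{(n-1)}$ appearing with coefficient $-1$ in $\Div_{n-1}(x_{n-1})$, the function $x_{n-1}^{q+1}$ has pole order $q+1$, which is prime to $p$; hence the place is totally ramified with different exponent $(q-1)(q+2)$ and produces $P^{(n)}$, respectively a shifted divisor $S_{i+1}^{(n)}$ of the same degree. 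Where $v(x_{n-1})=-q^{m}$ with $m\geq 1$, the block of $S_i^{(n-1)}$ appearing with a coefficient divisible by $q$, I would show the place is unramified and splits completely; the point is to use the identity $x_{n-1}^{q+1}=z_{n-1}^{q}/(z_{n-1}^{q-1}+1)$ (obtained from $z_{n-1}^q+z_{n-1}=x_{n-2}^{q+1}$ together with $x_{n-1}=z_{n-1}/x_{n-2}$) and the self-similarity of the tower, namely that $K(x_2,\dots,x_n)$ is $K$-isomorphic to $F^{(n-1)}$ via $x_j\mapsto x_{j-1}$, to exhibit a function $w\in F^{(n-1)}$ with $v\big(x_{n-1}^{q+1}-(w^q+w)\big)\geq 0$, indeed with the reduced element vanishing at the place; since $T^q+T$ has its $q$ roots in $K$ (the nonzero ones being the $\alpha_\mu$ with $\alpha_\mu^{q-1}+1=0$), the place then splits into $q$ rational places. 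Finally, where $x_{n-1}$ has a zero, in particular at $Q^{(n-1)}$, the reduced local equation is $z^q+z\equiv 0$, so the place splits completely; above $Q^{(n-1)}$ this yields $Q^{(n)}$ (where $z_n=0$) and the $q-1$ rational places of $S_0^{(n)}$.

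Reading off the ramification indices gives $\deg Q^{(n)}=\deg P^{(n)}=1$ at once, together with the two-case formula for $\deg S_i^{(n)}$ (the dichotomy $2i+3\leqslant n$ versus $2i+3>n$ records whether $S_i^{(n)}$ has, up to level $n$, passed only through \emph{splitting} steps or has already gone through a \emph{totally ramified} step). For $\Div_n(x_n)$ I would use $\Div_n(x_n)=\Div_n(z_n)-\Div_n(x_{n-1})$: the divisor of $x_{n-1}$ pulls up from $F^{(n-1)}$ via the ramification indices just found, while the orders of $z_n$ are forced by $z_n^q+z_n=x_{n-1}^{q+1}$ (at a pole of $x_{n-1}$ the pole order of $z_n$ is the unique possibility; at $Q^{(n)}$ one gets $v_{Q^{(n)}}(z_n)=(q+1)\,v_{Q^{(n)}}(x_{n-1})$; elsewhere $z_n$ is a unit or has controlled order), and matching coefficients produces the stated formula. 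Then Riemann--Hurwitz for $F^{(n)}/F^{(n-1)}$, in which only the totally ramified places contribute to the different, gives
\[
2g^{(n)}-2=q\big(2g^{(n-1)}-2\big)+(q-1)(q+2)\,q^{\floor{(n-1)/2}},
\]
the exponent arising because $1+\sum\deg S_i^{(n-1)}$, summed over the totally ramified $S$-block, is a geometric sum in $q$ equal to $q^{\floor{(n-1)/2}}$; solving this linear recursion with $g^{(2)}=q(q-1)/2$ yields the two closed forms for $g^{(n)}$.

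I expect the crux to be the middle case: showing that a place of $F^{(n-1)}$ at which $x_{n-1}$ has a pole of order divisible by $q$ is unramified, in fact totally split, in $F^{(n)}/F^{(n-1)}$. This cannot be read off pole orders as in the prime-to-$p$ case; it genuinely requires the recursive structure of the tower, equivalently the isomorphic subfield $K(x_2,\dots,x_n)\cong F^{(n-1)}$ together with Abhyankar's lemma, to produce the Artin--Schreier-reducing functions. Everything else is bookkeeping with valuations, ramification indices, and the genus formula.
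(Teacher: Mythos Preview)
The paper does not prove this proposition at all: it is stated with the attribution ``\cite{Garcia1995}, Lemma~2.9 and Theorem~2.10'' and used as input for the rest of the argument. So there is no in-paper proof to compare against; your proposal is, in effect, a sketch of the original Garcia--Stichtenoth argument, and it tracks that argument faithfully (the Artin--Schreier ramification analysis splitting into the three cases $v(x_{n-1})<0$ prime to $p$, $v(x_{n-1})<0$ divisible by $q$, and $v(x_{n-1})\geqslant 0$; the self-similarity $K(x_2,\dots,x_n)\cong F^{(n-1)}$ to handle the middle case; and the Riemann--Hurwitz recursion for the genus).

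Your identification of the crux is exactly right: the nontrivial step is showing that the places where $x_{n-1}$ has a pole of order divisible by $q$ are unramified, and this is precisely where the recursive identity $x_{n-1}^{q+1}=z_{n-1}^{\,q}/(z_{n-1}^{\,q-1}+1)$ and the isomorphic subtower are used in \cite{Garcia1995}. The Riemann--Hurwitz recursion you wrote down,
\[
2g^{(n)}-2=q\bigl(2g^{(n-1)}-2\bigr)+(q-1)(q+2)\,q^{\lfloor (n-1)/2\rfloor},
\]
checks against the stated genus formulas, so the bookkeeping is consistent. In short: the proposal is correct and is the same proof as in the cited source; the present paper simply quotes the result.
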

With Figure \ref{Fig:Sets of places} and Proposition \ref{prop:div_deg} we get the principle divisors of $x_1$, $x_2$ and $x_3$ in $ F^{(1)} $, $ F^{(2)} $ and $ F^{(3)} $:
\begin{align*}
\Div_1(x_1)&=Q^{(1)}-P^{(1)},\\
\Div_2(x_1)&=Q^{(2)}+S_0^{(2)}-q P^{(2)},\\
\Div_3(x_1)&=Q^{(3)}+S_0^{(3)}+qS_1^{(3)}-q^2P^{(3)},\\
\Div_2(x_2)&=q Q^{(2)}-S_0^{(2)}-P^{(2)},\\
\Div_3(x_2)&=q Q^{(3)}+q S_0^{(3)}- qS_1^{(3)}-q P^{(3)} ,\\
\Div_3(x_3)&=q^2 Q^{(3)} - q S_0^{(3)}-  S_1^{(3)}- P^{(3)} .
\end{align*}

Let $ w= \dfrac{x_1^{q}}{x_2 } $ and $ v=\dfrac{x_1^{q }x_2^{q-1} }{  x_3  }   $. Then
\begin{align*}
\Div_3 (w)& =(q^2+q) S_1^{(3)}  -(q^3-q) P^{(3)},\\
\Div_3 (v)&= (q^2+q)S_0^{(3)} +(q+1)  S_1^{(3)} - (q^3+q^2-q-1) P^{(3)}.
\end{align*}
It then follows that
\begin{align}\label{eq:E}
\Div_3 (x_1^i w^j v^k ) &=  i Q^{(3)}+ \big( i+ (q^2+q)k \big) S_0^{(3)}
+ \big( q i+ (q^2+q)j+ (q +1)k \big) S_1^{(3)} \nonumber \\
& \phantom{=} + \big( -q^2 i- (q^3-q)j- (q^3+q^2-q-1)k \big) P^{(3)}.
\end{align}
For convenience, we denote by 
$\ceil{x}$ the smallest integer not less than $ x $. Then $ j = \ceil{ {\alpha}/{\beta}\,} $ means that 
\[  j \in \mathbb{Z} \quad \text{and} \quad \alpha\leqslant \beta j < \alpha + \beta .
\]
Let us define a lattice point set
\begin{align}\label{eq:O1}
\Omega_{r,s, t, u} =\Big\{\, (i, j, k )\,\Big|\, 
-r &\leqslant i, \,
-s  \leqslant i+ (q^2+q)k <-s+(q^2+q),  \nonumber\\
-t & \leqslant q i+ (q^2+q)j+ (q +1)k <-t+(q^2+q)   ,\nonumber \\
-u & \leqslant -q^2 i- (q^3-q)j- (q^3+q^2-q-1)k    \,\Big\}, 
\end{align}
or equivalently, 
\begin{align*}
\Omega _{r,s,t,u}: =\Big\{\,(i,j,k) \,\Big| \, -r & \leqslant i ,\,\, k = \ceil{\frac{-s- i}{q^2+q}}, \,\,j  = \ceil{\frac{-t -qi -(q+1)k}{q^2+q} }, \\ 
\, -u & \leqslant -q^2 i- (q^3-q)j- (q^3+q^2-q-1)k \,   \Big\} .
\end{align*}

The following proposition is useful in calculating bases of Riemman-Roch spaces in $ F^{(3)} $, whose proof is very technical and so is given later after some preparations.
\begin{prop}\label{prop:Omega_num}
	There exists a constant $R$ depending on $ s,t, u$, such that for all $ r \geqslant R $, 
	\[ \# \Omega_{ r, s , t , u } = 1-g^{(3)} +r+(q-1)s+ (q-1)t + u . \] 
\end{prop}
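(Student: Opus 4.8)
The plan is to interpret $\#\Omega_{r,s,t,u}$ as the dimension of a Riemann-Roch space and then invoke the Riemann-Roch theorem. More precisely, by the equivalent description of $\Omega_{r,s,t,u}$ via ceiling functions, for each integer $i$ with $i \geqslant -r$ there is exactly one pair $(j,k)$ with $k = \ceil{(-s-i)/(q^2+q)}$ and $j = \ceil{(-t-qi-(q+1)k)/(q^2+q)}$, and the triple $(i,j,k)$ lies in $\Omega_{r,s,t,u}$ precisely when the last inequality, $-u \leqslant -q^2 i - (q^3-q)j - (q^3+q^2-q-1)k$, holds. Using the principal divisor computation \eqref{eq:E}, this says exactly that $x_1^i w^j v^k \in \mathcal{L}(rQ^{(3)} + sS_0^{(3)} + tS_1^{(3)} + uP^{(3)})$, since the coefficients of $Q^{(3)}$, $S_0^{(3)}$ and $S_1^{(3)}$ in $\Div_3(x_1^i w^j v^k)$ automatically satisfy the required lower bounds by the choices of $k$ and $j$ (these are the tightest choices making $i + (q^2+q)k \geqslant -s$ and $qi + (q^2+q)j + (q+1)k \geqslant -t$), and the only remaining constraint is the pole order at $P^{(3)}$ being at most $u$, together with $i \geqslant -r$.

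Next I would argue that the functions $\{x_1^i w^j v^k : (i,j,k) \in \Omega_{r,s,t,u}\}$ are $K$-linearly independent. This follows from the fact that distinct triples $(i,j,k)$ in $\Omega_{r,s,t,u}$ give functions with distinct valuations at $S_0^{(3)}$, $S_1^{(3)}$, or $Q^{(3)}$ — indeed, within a fixed $i$ the triple is uniquely determined, and varying $i$ changes the $Q^{(3)}$-coefficient $i$, so all these functions have pairwise distinct pole/zero behavior and hence are independent over $K$. Thus $\#\Omega_{r,s,t,u} \leqslant \dim \mathcal{L}(rQ^{(3)} + sS_0^{(3)} + tS_1^{(3)} + uP^{(3)})$. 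Conversely, one needs the reverse inequality, i.e., that these functions actually span the space; for the purposes of this proposition it may be cleaner to establish the equality of counts directly rather than the spanning property, postponing the full basis statement (Theorem~0) to its own argument. The degree of the divisor $G := rQ^{(3)} + sS_0^{(3)} + tS_1^{(3)} + uP^{(3)}$ is $\deg G = r + (q-1)s + (q-1)t + u$ by Proposition~\ref{prop:div_deg}(1), so Riemann-Roch gives $\dim \mathcal{L}(G) = \deg G + 1 - g^{(3)} + \ell(W - G)$, and for $\deg G$ large enough (which is guaranteed once $r \geqslant R$ for a suitable $R$ depending on $s,t,u$) the index-of-specialty term vanishes, yielding exactly the claimed formula $1 - g^{(3)} + r + (q-1)s + (q-1)t + u$.

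It therefore remains to show $\#\Omega_{r,s,t,u} = \dim \mathcal{L}(G)$ for $r$ large, and the natural route is a counting argument on $i$: once $i$ is large enough relative to $s,t,u$, the last inequality defining $\Omega_{r,s,t,u}$ fails (the dominant term $-q^2 i$ drives the right side to $-\infty$), so only finitely many values of $i \geqslant -r$ contribute, and for $r$ beyond some threshold $R$ the count stabilizes to a value I would show equals $\deg G + 1 - g^{(3)}$ by a direct, if tedious, lattice-point count — grouping the admissible $i$ into residue classes modulo $q^2+q$ (or modulo $q$, according to how $k$ and $j$ jump) and summing an arithmetic-progression-type expression. The main obstacle I anticipate is precisely this bookkeeping: tracking how the ceiling functions $k(i)$ and $j(i)$ behave as $i$ ranges over a long interval, so that the net contribution to $-q^2 i - (q^3-q)j - (q^3+q^2-q-1)k$ telescopes correctly against the bound $-u$. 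The independence and Riemann-Roch parts are routine; the delicate part is verifying that the lattice count matches $\deg G + 1 - g^{(3)}$ exactly, with no off-by-one discrepancy, which is likely why the paper defers the proof "after some preparations."
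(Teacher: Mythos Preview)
Your linear-independence argument correctly gives $\#\Omega_{r,s,t,u}\leqslant \ell(G)$, and for $r$ large Riemann--Roch gives $\ell(G)=1-g^{(3)}+\deg G$. But this is only an upper bound, and you acknowledge that the reverse inequality needs either the spanning property or a direct count. In the paper the logic runs the opposite way: Proposition~\ref{prop:Omega_num} is proved by a purely combinatorial count with no appeal to Riemann--Roch, and then Proposition~\ref{prop:basis} (the basis statement) is deduced from it via the very inequality you wrote down. So the Riemann--Roch portion of your proposal is not a route to the proposition but rather a consequence of it; the entire burden of the proof is the lattice count you sketch only vaguely at the end.

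The paper's count proceeds by three reductions. Lemma~\ref{lem:Omega_reduction} gives an explicit bijection $\Omega_{r,s,t,u}\cong\Omega_{\widehat r,0,\widehat t,\widehat u}$ with $0\leqslant\widehat t<q+1$, arising from the divisor equivalences of Proposition~\ref{prop:div_equi}. Lemma~\ref{lem:Omega_large} (together with the symmetry $\#\Omega_{r,s,t,u}=\#\Omega_{u,t,s,r}$ of Lemma~\ref{lem:set_inv}) shows that past fixed thresholds $r^*=q^3-q$ and $u^*=q^3+q^2$ each unit increase in $r$ or $u$ adds exactly one lattice point, reducing to a single anchor value $\#\Omega_{r^*,0,t,u^*}$. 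Finally Lemma~\ref{lem:Omega_main} computes this anchor by slicing on $k$ (written $k=-qa+b$) and reducing to two planar counts handled in Lemmas~\ref{lem:Psi_a} and~\ref{lem:Phi_b}; the key device is to introduce an auxiliary parameter $l$ via $u=u^*+(q^2+q)l$ and sum over $l=0,\dots,q-1$, so that the split pieces $\Phi^{(1)}$, $\Phi^{(2)}$ recombine into sets $\Psi$ whose sizes telescope. Your suggestion to group $i$ into residue classes is in the right spirit but underspecifies this structure: the paper slices on $k$ rather than $i$, and the matching of pieces across the auxiliary index $l$ is where the argument is genuinely delicate, not merely tedious bookkeeping.
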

Our main result is given in the next proposition, which states the bases of various Riemman-Roch spaces.
\begin{prop}\label{prop:basis}
	The elements $ x_1^i w^j v^k $ with $ (i,j,k)\in \Omega_{r,s,t,u} $ form a basis of the Riemann-Roch space $\mathcal{L}(G)$, where we denote $ G := r Q^{(3)} + s S_0^{(3)}+t S_1^{(3)} + u P^{(3)}  $.
\end{prop}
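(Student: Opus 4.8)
## Proof Proposal

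\noindent\textbf{Strategy.} The plan is to prove the two standard inclusions implicit in any ``basis of a Riemann-Roch space'' claim, namely (a) every element $x_1^i w^j v^k$ with $(i,j,k)\in\Omega_{r,s,t,u}$ lies in $\mathcal{L}(G)$ and these elements are $K$-linearly independent, and (b) they span $\mathcal{L}(G)$. I would obtain (b) not by a direct spanning argument but by a dimension count: once (a) is established, the span is a subspace of $\mathcal{L}(G)$ of dimension $\#\Omega_{r,s,t,u}$, so it suffices to show this number equals $\dim\mathcal{L}(G)$. For $r$ large this is exactly Proposition \ref{prop:Omega_num} combined with the Riemann-Roch theorem, since $\deg G = r + (q-1)s + (q-1)t + u$ is then large enough that $\dim\mathcal{L}(G) = 1 - g^{(3)} + \deg G$. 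The remaining (small-$r$) case I would handle by a descending induction on $r$, peeling off one unit of $r$ at a time.

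\medskip
\noindent\textbf{Step 1: membership.} From the principal divisor formula \eqref{eq:E}, the condition $(i,j,k)\in\Omega_{r,s,t,u}$ is designed precisely so that $\Div_3(x_1^i w^j v^k) + G \geqslant 0$. Indeed, the four defining inequalities of $\Omega_{r,s,t,u}$ say that the coefficients of $Q^{(3)}, S_0^{(3)}, S_1^{(3)}, P^{(3)}$ in $\Div_3(x_1^i w^j v^k)$ are respectively $\geqslant -r$, $\geqslant -s$, $\geqslant -t$, $\geqslant -u$ (the upper bounds in the definition are redundant for membership but pin down $j,k$ uniquely given $i$). Hence $x_1^i w^j v^k \in \mathcal{L}(G)$; this step is a routine substitution.

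\medskip
\noindent\textbf{Step 2: linear independence.} Here is where the ``ceiling'' normalization does the work. For each admissible $i$ the pair $(j,k)$ is uniquely determined by $k = \ceil{(-s-i)/(q^2+q)}$ and $j = \ceil{(-t-qi-(q+1)k)/(q^2+q)}$, so distinct lattice points in $\Omega_{r,s,t,u}$ have distinct values of $i$. I would then show the functions have pairwise distinct pole orders at $P^{(3)}$: from \eqref{eq:E} the pole order at $P^{(3)}$ of $x_1^i w^j v^k$ is $q^2 i + (q^3-q)j + (q^3+q^2-q-1)k$, and one checks using the tight ranges forced by the ceilings (the quantities $i + (q^2+q)k$ and $qi+(q^2+q)j+(q+1)k$ each vary over a window of length $q^2+q$) that this is a strictly monotone function of $i$. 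Distinct pole orders at a single place force linear independence over $K$.

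\medskip
\noindent\textbf{Step 3: the dimension count and the descent.} For $r \geqslant R$ (with $R$ as in Proposition \ref{prop:Omega_num}) we are done: $\#\Omega_{r,s,t,u} = 1 - g^{(3)} + r + (q-1)s + (q-1)t + u = \dim\mathcal{L}(G)$ by Riemann-Roch, and a linearly independent family of the right size inside $\mathcal{L}(G)$ is a basis. For the general case I would induct downward on $r$: suppose the statement holds for $G' = (r+1)Q^{(3)} + sS_0^{(3)} + tS_1^{(3)} + uP^{(3)}$. Comparing $\Omega_{r,s,t,u}$ with $\Omega_{r+1,s,t,u}$, the latter contains exactly the points of the former together with (at most) the points whose $i$-value equals the single extra admissible value $i = -(r+1)$; and $\dim\mathcal{L}(G')/\mathcal{L}(G) \leqslant 1$ since $G' - G = Q^{(3)}$ has degree $1$. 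The inductive basis of $\mathcal{L}(G')$, restricted to those elements already lying in $\mathcal{L}(G)$, then has the right cardinality $\#\Omega_{r,s,t,u}$, is linearly independent by Step 2, and sits inside $\mathcal{L}(G)$ of dimension $\geqslant \dim\mathcal{L}(G') - 1 = \#\Omega_{r,s,t,u}$, forcing equality.

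\medskip
\noindent\textbf{Main obstacle.} The genuinely delicate point is Step 3's descent bookkeeping: one must verify that $\Omega_{r+1,s,t,u}\setminus\Omega_{r,s,t,u}$ consists of \emph{exactly} one lattice point (not zero, not several) precisely when $\dim\mathcal{L}(G') = \dim\mathcal{L}(G) + 1$, i.e. when $P^{(3)}$ is not a pole number of $G$ at that step — equivalently, when $(-(r+1), j, k)$ with the forced $j, k$ satisfies the final inequality $-u \leqslant -q^2 i - (q^3-q)j - (q^3+q^2-q-1)k$. Matching this combinatorial dichotomy against the jump of $\dim\mathcal{L}$ is the crux; it is cleaner to bypass it entirely by first proving Proposition \ref{prop:Omega_num} for \emph{all} $r$ (not just large $r$) via the same monotonicity estimates, after which Step 3 collapses to a one-line application of Riemann-Roch and Steps 1--2. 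I would therefore aim to strengthen Proposition \ref{prop:Omega_num} as a lemma in passing, using that $\deg G \geqslant 2g^{(3)} - 1$ fails only for boundedly many $(r,s,t,u)$ and treating those by the explicit lattice-point count.
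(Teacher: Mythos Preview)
Your outline is broadly on the right track, but you are working much harder than necessary, and the final ``bypass'' suggestion is actually wrong.

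\textbf{Linear independence.} You overlook the simplest invariant: from \eqref{eq:E} one has $v_{Q^{(3)}}(x_1^i w^j v^k)=i$, so distinct lattice points (which, as you note, have distinct $i$) give elements with distinct $Q^{(3)}$-valuations. That already forces linear independence. Your detour through pole orders at $P^{(3)}$ requires a monotonicity claim that is neither obvious nor needed; the ceilings make $j,k$ jump as $i$ varies, and checking that $q^2 i+(q^3-q)j+(q^3+q^2-q-1)k$ is strictly monotone in $i$ is real work you have not done.

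\textbf{The general case.} The paper avoids the descent entirely. Once the statement is known for some large $r'$, one uses the same observation $v_{Q^{(3)}}(x_1^i w^j v^k)=i$ a second time: since the basis elements of $\mathcal{L}(G')$ have pairwise distinct $Q^{(3)}$-valuations, any $f=\sum a_i\,x_1^i w^j v^k\in\mathcal{L}(G')$ satisfies $v_{Q^{(3)}}(f)=\min\{i:a_i\neq 0\}$. The condition $f\in\mathcal{L}(G)$ is $v_{Q^{(3)}}(f)\geqslant -r$, which forces $a_i=0$ whenever $i<-r$; hence $f$ lies in the span of the sub-basis indexed by $\Omega_{r,s,t,u}$. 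No induction, no case split. Your descent can be patched (split on whether $i=-(r+1)$ gives an admissible point, and in the affirmative case observe that the extra element has $Q^{(3)}$-valuation $-(r+1)$ so $\mathcal{L}(G)\subsetneq\mathcal{L}(G')$), but as written the line ``$\dim\mathcal{L}(G')-1=\#\Omega_{r,s,t,u}$'' is false exactly when no new point appears.

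\textbf{The proposed bypass is wrong.} You cannot strengthen Proposition~\ref{prop:Omega_num} to all $r$: the formula $\#\Omega_{r,s,t,u}=1-g^{(3)}+\deg G$ is simply false for small $r$ (the left side is a nonnegative integer, the right side goes to $-\infty$; and in the range $0\leqslant\deg G<2g^{(3)}-1$ one has $\dim\mathcal{L}(G)\geqslant 1-g^{(3)}+\deg G$ with strict inequality whenever $G$ is special). Nor does ``$\deg G\geqslant 2g^{(3)}-1$ fails for boundedly many $(r,s,t,u)$'' hold: there are infinitely many such tuples. The whole point of the filtration argument is that it handles all $r$ in one stroke without ever computing $\dim\mathcal{L}(G)$ in the special range.
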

\begin{proof}
	Let $ (i,j,k) \in \Omega_{r,s, t,u} $. Then one easily gets $ x_1^i w^j v^k  \in \mathcal{L}(G) $.
	We have from \eqref{eq:E} that
	$ v_{Q^{(3)}}( x_1^i w^j v^k ) = i$, which indicates that the valuation of $  x_1^i w^j v^k  $ at the rational place $Q^{(3)}$ uniquely depends on $i$. Since the lattice points in $ \Omega_{r,s, t,u} $ provide distinct values of $i$, the elements $ x_1^i w^j v^k  $
	are linearly independent of each other, with $ (i,j,k) \in \Omega_{r,s,t,u} $. In order to show that they constitute a basis for the Riemann-Roch space $\mathcal{L}( G )$, the only thing is to prove that
	\[
	\ell (G)
	= \#\Omega_{r,s,t,u},
	\]	
	where $ \ell (G) $ is the dimension of $\mathcal{L}( G )$.
	For the case of $ r $ sufficiently large, it follows from the Riemann-Roch Theorem and Propositions \ref{prop:div_deg} and \ref{prop:Omega_num} that
	\begin{align*}
	\ell (G) & = 1-g^{(3)} + \deg(G)\\
	& = 1-g^{(3)} +r+ (q-1)s + (q-1)t+u\\
	&= \#\Omega_{r,s,t,u}.
	\end{align*}
	This implies that
	$ \mathcal{L}(G) $
	is spanned by the elements $ x_1^i w^j v^k  $ with $ (i,j,k) \in \Omega_{r,s, t,u} $.
	
	For the general case, we choose $ r' > r $ large enough and set 
	$ G' :=  r' Q^{(3)} + s S_0^{(3)}+t S_1^{(3)} + u P^{(3)}  $.
	From above argument, we know that the elements $ x_1^i w^j v^k  $ with $ (i,j,k) \in \Omega_{r',s,t,u} $ span the whole space of	$ \mathcal{L}(G') $.
	Remember that
	$ \mathcal{L}(G) $ is a linear subspace of $ \mathcal{L}(G') $, which can be written as
	\begin{equation*}
	\mathcal{L}(G) = \Big\{\, f \in \mathcal{L}(G') \,\Big|\,v_{Q^{(3)}}(f)\geqslant -r\,\Big\}.
	\end{equation*}
	Thus, we choose $ f \in \mathcal{L}(G)  $ and suppose that
	\begin{equation*}
	f=\sum_{(i,j,k) \in \Omega_{r',s,t,u}} a_{i}  x_1^i w^j v^k  ,
	\end{equation*}
	since $ f \in \mathcal{L}(G') $ by definition. The valuation of $ f $ at $ Q^{(3)} $ is $ v_{Q^{(3)}}(f)=\min_{a_i\neq 0} \{ i \}$. Then the inequality $ v_{Q^{(3)}}(f)\geqslant -r $ gives that, if $ a_i \neq 0 $, then $ i \geqslant -r $. Equivalently, if $ i < -r $, then $ a_i=0 $.  From the definition of $ \Omega_{r,s,t,u} $ and $ \Omega_{r',s,t,u} $, we get that
	\begin{equation*}
	f=\sum_{(i,j,k) \in \Omega_{r,s,t,u} } a_{i}  x_1^i w^j v^k  .
	\end{equation*}
	Thus $ \mathcal{L}(G) $ is also spanned by the elements $ x_1^i w^j v^k  $ with $ (i,j,k) \in \Omega_{r,s, t,u} $ and the theorem then follows.
\end{proof}

\begin{cor}\label{cor:basis}
	The elements $ x_1^i x_2^j x_3^k $ with $ (i,j,k)\in \Omega'_{r,s,t,u} $ form a basis of the Riemann-Roch space $\mathcal{L}(G )$, where $ G =  r Q^{(3)} + s S_0^{(3)}+t S_1^{(3)} +u P^{(3)} $ and
	\begin{align*}
	\Omega_{r,s, t, u}^{\prime} =\Big\{\, (i, j, k )\,\Big|\, -r &\leqslant i+ qj +q^2 k,\\
	-s &\leqslant i+ qj -q k< -s+(q^2+q),\\
	-t &\leqslant qi-qj-k< -t+(q^2+q),\\
	-u &\leqslant -q^2 i-qj-k  \, \Big\}.
	\end{align*}
	
\end{cor}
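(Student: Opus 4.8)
The plan is to deduce Corollary~\ref{cor:basis} directly from Proposition~\ref{prop:basis} by a change of variables, expressing the monomials $x_1^i x_2^j x_3^k$ in terms of the monomials $x_1^{i'} w^{j'} v^{k'}$ used in the proposition. First I would record the explicit relations between the two coordinate systems. Since $w = x_1^q/x_2$ and $v = x_1^q x_2^{q-1}/x_3$, one solves for $x_2$ and $x_3$: namely $x_2 = x_1^q/w$ and $x_3 = x_1^q x_2^{q-1}/v = x_1^q (x_1^q/w)^{q-1}/v = x_1^{q^2} w^{-(q-1)} v^{-1}$. Substituting, a monomial $x_1^i x_2^j x_3^k$ equals $x_1^{i} \cdot x_1^{qj} w^{-j} \cdot x_1^{q^2 k} w^{-(q-1)k} v^{-k} = x_1^{\,i+qj+q^2 k}\, w^{\,-j-(q-1)k}\, v^{\,-k}$. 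Hence the map $(i,j,k)\mapsto (i',j',k') := (i+qj+q^2k,\ -j-(q-1)k,\ -k)$ sends the monomial basis in the $x$-variables to the monomial basis in the $w,v$-variables. This map is an invertible $\mathbb Z$-linear change of coordinates on $\mathbb Z^3$ (its matrix is upper/lower triangular with $\pm 1$ on the diagonal), so it is a bijection of $\mathbb Z^3$ onto itself.

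Next I would verify that this bijection carries $\Omega'_{r,s,t,u}$ onto $\Omega_{r,s,t,u}$. This is a routine substitution: plug $i' = i+qj+q^2k$, $j' = -j-(q-1)k$, $k'=-k$ into each of the four defining inequalities of $\Omega_{r,s,t,u}$ in~\eqref{eq:O1} and check that one recovers exactly the four inequalities defining $\Omega'_{r,s,t,u}$. Concretely, $-r\leqslant i'$ becomes $-r\leqslant i+qj+q^2k$; the expression $i' + (q^2+q)k'$ becomes $i+qj+q^2k - (q^2+q)k = i+qj-qk$, giving the second line; the expression $qi' + (q^2+q)j' + (q+1)k'$ becomes $q(i+qj+q^2k) - (q^2+q)(j+(q-1)k) - (q+1)k = qi - qj - k$ after collecting the $j$- and $k$-coefficients, giving the third line; and $-q^2 i' - (q^3-q)j' - (q^3+q^2-q-1)k'$ becomes $-q^2 i - qj - k$, giving the fourth line. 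Since these coincide with the $\Omega'$-conditions, the change of variables restricts to a bijection $\Omega'_{r,s,t,u}\xrightarrow{\sim}\Omega_{r,s,t,u}$.

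Finally I would conclude: by Proposition~\ref{prop:basis} the set $\{x_1^{i'}w^{j'}v^{k'} : (i',j',k')\in\Omega_{r,s,t,u}\}$ is a $K$-basis of $\mathcal L(G)$, and under the bijection above each such element equals, up to the substitution, a monomial $x_1^i x_2^j x_3^k$ with $(i,j,k)\in\Omega'_{r,s,t,u}$; conversely every such monomial arises this way. Hence $\{x_1^i x_2^j x_3^k : (i,j,k)\in\Omega'_{r,s,t,u}\}$ is exactly the same subset of $F^{(3)}$, which is therefore a basis of $\mathcal L(G)$. (As a consistency sanity check one can also recompute $\Div_3(x_1^i x_2^j x_3^k)$ directly from the principal divisors $\Div_3(x_1),\Div_3(x_2),\Div_3(x_3)$ listed above and see that the membership condition $x_1^i x_2^j x_3^k\in\mathcal L(G)$ together with the ``$\ceil{\cdot}$-normalization'' yields precisely $\Omega'_{r,s,t,u}$.) I do not anticipate a genuine obstacle here; the only thing requiring care is the bookkeeping of the coefficients of $j$ and $k$ when expanding $qi' + (q^2+q)j' + (q+1)k'$, where the $(q-1)$ inside $j'$ must be tracked so that the $j$-coefficient collapses to $-q$ and the $k$-coefficient collapses to $-1$; everything else is a triangular linear substitution.
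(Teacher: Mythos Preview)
Your proposal is correct and follows essentially the same approach as the paper: a linear change of indices showing that the two families of monomials coincide as subsets of $F^{(3)}$, then invoking Proposition~\ref{prop:basis}. The paper runs the substitution in the opposite direction, writing $x_1^i w^j v^k = x_1^{i+qj+qk} x_2^{-j+(q-1)k} x_3^{-k}$ and setting $I = i+qj+qk$, $J=-j+(q-1)k$, $K=-k$, but this is exactly the inverse of your map, so the arguments are the same.
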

\begin{proof}
	By definition, we have 
	\[ x_1^i w^j v^k=x_1^{i+qj+q k} x_2^{-j+(q-1) k } x_3^{-k}. \]
	Let $ I:= i+qj+q k $, $ J:=-j+(q-1) k $, and $ K:=-k $. Then, we find that
	\[ \Big\{\,x_1^I x_2^J x_3^K \,\Big|\,(I,J,K)\in \Omega_{r,s,t,u}^{\prime} \,\Big\}=\Big\{\,x_1^i w^j z^k \,\Big|\,(i,j,k)\in \Omega_{r,s,t,u}  \,\Big\} .\]
	Now the corollary follows from Proposition \ref{prop:basis}.
\end{proof}
\begin{rem}
	When $ r=0 $ and $ s=0 $, we can obtain the bases of the Riemman-Roch space $\mathcal{L}(t S_1^{(3)} +u P^{(3)} )$ by Proposition \ref{prop:basis} and Corollary \ref{cor:basis}. Note that these are new bases different from the ones in Proposition \ref{prop:basi1}. 
\end{rem}	

The proof of Proposition \ref{prop:Omega_num} will be given in several steps.
\begin{enumerate}
	\item After transformation indicated in  Lemma \ref{lem:Omega_reduction}, we may assume that $s = 0$, $ 0 \leqslant t <q+1 $ and $q^3+q^2 \leqslant u < 2(q^3+q^2)$.
	\item Let $ r ^* := q^3-q $ and $ u^*:=  q^3+q^2  $. Lemma \ref{lem:Omega_large} tells us that the difference of  $\# \Omega_{r,0,t,u} $ and   $ \# \Omega_{r^*,0,t,u^*} $ is $(r - r^*)+(u - u^*) $. So it is enough to give a formula for $ \# \Omega_{r^*,0,t,u^*} $.
	\item Lemma \ref{lem:Omega_main} is devoted to reduce  our three-dimensional lattice point set $ \Omega_{r^*,0,t,u^*} $ to several two-dimensional lattice point sets.
	\item Using Pick's theorem,  one can easily solve the related two-dimensional counting problems,
	which is done in Lemmas \ref{lem:Psi_a} and \ref{lem:Phi_b}.
\end{enumerate}

In order to prove Proposition \ref{prop:Omega_num}, we need to do some preparations. 
\begin{prop} \label{prop:div_equi}
	Let $ \alpha,\beta \in \mathbb{Z} $. Suppose that $  t - q s  = \widehat t+  (q+1)\alpha$ with $ 0 \leqslant \widehat t < q+1 $, and $ u + q^2 s -  (q+1)\alpha = (q^3+q^2) \beta + \widehat u$. Let $ \widehat r = r-s+(q^2+q )\alpha+(q^3+q^2 )\beta $. Then 
	\begin{align*}
	r Q^{(3)}+  s S_0^{(3)}+ t S_1^{(3)}+ u P^{(3)} \sim 
	\widehat r Q^{(3)}+ \widehat{t} S_1^{(3)} + \widehat{u} P^{(3)}.
	\end{align*}  
\end{prop}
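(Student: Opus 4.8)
The plan is to realize the difference of the two divisors in the statement as a principal divisor; the linear equivalence then follows at once. Of the three functions $x_1$, $w$, $v$, only $x_1$ contributes to $Q^{(3)}$, only $x_1$ and $v$ contribute to $S_0^{(3)}$, and $w$ alone can be used to correct the $S_1^{(3)}$-part, so it is natural to look for $h:=x_1^i w^j v^k$ with $i,j,k\in\mathbb{Z}$ such that
\[
\Div_3(h)=\bigl(r Q^{(3)}+ s S_0^{(3)}+ t S_1^{(3)}+ u P^{(3)}\bigr)-\bigl(\widehat r Q^{(3)}+ \widehat{t}\, S_1^{(3)} + \widehat{u}\, P^{(3)}\bigr).
\]
By the formula \eqref{eq:E} for $\Div_3(x_1^i w^j v^k)$, this amounts to a system of linear equations in $i,j,k$: the $S_0^{(3)}$-, $S_1^{(3)}$-, $P^{(3)}$- and $Q^{(3)}$-coefficients of $\Div_3(h)$ must equal $s$, $t-\widehat t$, $u-\widehat u$ and $r-\widehat r$ respectively, one of which is automatic since both divisors have degree zero (this last fact is itself a short check against the definitions of $\widehat r$, $\widehat t$, $\widehat u$).

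Solving this system is essentially the whole argument. Matching the $Q^{(3)}$- and $S_0^{(3)}$-coefficients and using $q^3+q^2=q(q^2+q)$ forces $i=s-(q^2+q)\alpha-(q^3+q^2)\beta$ and $k=\alpha+q\beta$; matching the $S_1^{(3)}$-coefficient and using $q^3+q^2-q-1=(q+1)(q^2-1)$, so that $q(q^3+q^2-q-1)/(q^2+q)=q^2-1$, then forces $j=q\alpha+(q^2-1)\beta$. All three exponents are integers because $\alpha,\beta\in\mathbb{Z}$, so $h=x_1^i w^j v^k$ is a genuine element of $F^{(3)}$. It remains to confirm the last coefficient identity, namely that the $P^{(3)}$-coefficient $-q^2 i-(q^3-q)j-(q^3+q^2-q-1)k$ of $\Div_3(h)$ equals $u-\widehat u$; substituting the above values and collecting the coefficients of $s$, of $\alpha$, and of $\beta$ separately, this reduces exactly to the two defining relations $t-qs=\widehat t+(q+1)\alpha$ and $u+q^2 s-(q+1)\alpha=(q^3+q^2)\beta+\widehat u$.

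I expect no conceptual obstacle: after writing down the correct $h$ the proof is a single finite verification. The only points needing a little care are guessing the form of $h$, which as indicated is essentially forced by which of $x_1$, $w$, $v$ contribute to each place, and the two elementary factorizations $q^3+q^2=q(q^2+q)$ and $q^3+q^2-q-1=(q+1)(q^2-1)$ that make the exponents $j$ and $k$ integral. I would state these explicitly, exhibit $h$ with the exponents above, and then simply record the four coefficient equalities computed from \eqref{eq:E}.
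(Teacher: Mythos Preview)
Your proposal is correct and essentially the same as the paper's argument: both exhibit an explicit element $x_1^i w^j v^k$ whose principal divisor is the difference of the two sides. The paper packages the construction slightly differently, first recording the two auxiliary principal divisors
\[
\Div_3\bigl(x_1^{q^2+q}w^{-q}v^{-1}\bigr)=(q^2+q)Q^{(3)}-(q+1)S_1^{(3)}-(q+1)P^{(3)},\qquad
\Div_3\bigl(x_1^{q^3+q^2}w^{1-q^2}v^{-q}\bigr)=(q^3+q^2)Q^{(3)}-(q^3+q^2)P^{(3)},
\]
and then taking $f=x_1^{-s}\bigl(x_1^{q^2+q}w^{-q}v^{-1}\bigr)^{\alpha}\bigl(x_1^{q^3+q^2}w^{1-q^2}v^{-q}\bigr)^{\beta}$, which is exactly the inverse of your $h$; your direct linear-system solve arrives at the same exponents. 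One small wording point: by the time you reach the $P^{(3)}$-check you have already used the relation $t-qs=\widehat t+(q+1)\alpha$ to compute $j$, so only the second defining relation (for $\widehat u$) is actually needed there---or, as you note, you can skip that check entirely by the degree-zero observation.
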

\begin{proof}
	It follows  from the divisors of $ x_1 $, $ w $ and $ v $ that
	\begin{equation*}
	\Div_3 (x_1^{q^2+q} w^{-q}  v^{-1} ) = (q^2+q) Q^{(3)} -(q+1) S_1^{(3)} -(q+1) P^{(3)}
	\end{equation*}
	and
	\begin{equation*}
	\Div_3 (x_1^{q^3+q^2} w^{1-q^2}  v^{-q} ) = (q^3+q^2) Q^{(3)}   -(q^3+q^2) P^{(3)}.
	\end{equation*}
	Consider an  element $ f \in F^{(3)}  $, say
	\begin{align*}
	f  & \coloneqq
	x_1^{-s} \left(x_1^{q^2+q} w^{-q}  v^{-1} \right)^{\alpha} \left(x_1^{q^3+q^2} w^{1-q^2}  v^{-q} \right)^{\beta } \\
	&\;=x_1^{-s+(q^2+q)\alpha + (q^3+q^2)\beta }  \,\, w ^{-q\alpha + (1-q^2)\beta } \,\, v^{-\alpha -q\beta }  . 
	\end{align*}
	Then we obtain
	\begin{align*}
	& r Q^{(3)} +  s S_0^{(3)}+ t S_1^{(3)}+ u P^{(3)}+ \Div_3 (f)\\
	&  =(r-s) Q^{(3)} + (t- qs) S_1^{(3)}+ (u + q^2 s) P^{(3)}+(q^2+q )\alpha  Q^{(3)}-  (q+1)\alpha S_1^{(3)} \\
	&\phantom{=} \, -  (q+1)\alpha P^{(3)} +(q^3+q^2 )\beta  Q^{(3)} -  (q^3+q^2)\beta P^{(3)}\\
	&=\left(r-s+(q^2+q )\alpha+(q^3+q^2 )\beta \right)    Q^{(3)} + \widehat t S_1^{(3)} + \widehat u P^{(3)}.
	\end{align*}
	The desired conclusion then follows.
\end{proof}

\begin{lem}\label{lem:Omega_reduction}
	Suppose that $  t - q s  = \widehat t+  (q+1)\alpha$ with $ 0 \leqslant \widehat t < q+1 $ and  $ u + q^2 s -  (q+1)\alpha = (q^3+q^2) \beta + \widehat u$.  Let $ \widehat r = r-s+(q^2+q )\alpha+(q^3+q^2 )\beta $. Then we have $  \# \Omega_{r,s, t,u} =\# \Omega_{\widehat r,0 , \widehat t,\widehat u}  $ and
	\[ 	r+(q-1)s+(q-1)t+u =  \widehat{r}+ (q-1)   \widehat{t}+\widehat{u}.  \]
\end{lem}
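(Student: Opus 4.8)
\textbf{Proof proposal for Lemma \ref{lem:Omega_reduction}.}
The plan is to exhibit a single translation of $\mathbb Z^3$ matching $\Omega_{r,s,t,u}$ with $\Omega_{\widehat r,0,\widehat t,\widehat u}$ condition by condition, where the translation vector is read off directly from the proof of Proposition~\ref{prop:div_equi}; the numerical identity will then fall out of the invariance of degree under linear equivalence.

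First I would fix notation. Write $\Lambda_Q,\Lambda_0,\Lambda_1,\Lambda_P$ for the four linear forms in $(i,j,k)$ that cut out $\Omega_{r,s,t,u}$ in \eqref{eq:O1}. By \eqref{eq:E} these are precisely the coefficients of the principal divisor $\Div_3(x_1^i w^j v^k)$ at the places $Q^{(3)},S_0^{(3)},S_1^{(3)},P^{(3)}$, so in particular each $\Lambda_\bullet$ is additive in $(i,j,k)$. Now let $f=x_1^a w^b v^c$ be the function appearing in the proof of Proposition~\ref{prop:div_equi}; the computation carried out there shows
\[ \Div_3(f)=(\widehat r-r)\,Q^{(3)}-s\,S_0^{(3)}+(\widehat t-t)\,S_1^{(3)}+(\widehat u-u)\,P^{(3)}, \]
and, again by \eqref{eq:E}, the four coefficients on the right are exactly $\Lambda_Q(a,b,c),\Lambda_0(a,b,c),\Lambda_1(a,b,c),\Lambda_P(a,b,c)$. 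Hence the translation $\tau(i,j,k):=(i-a,\,j-b,\,k-c)$ acts on the forms by
\[ \Lambda_Q\circ\tau=\Lambda_Q-(\widehat r-r),\qquad \Lambda_0\circ\tau=\Lambda_0+s, \]
\[ \Lambda_1\circ\tau=\Lambda_1-(\widehat t-t),\qquad \Lambda_P\circ\tau=\Lambda_P-(\widehat u-u). \]

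Next I would check that $\tau$ turns each of the four defining conditions of $\Omega_{r,s,t,u}$ into the corresponding condition of $\Omega_{\widehat r,0,\widehat t,\widehat u}$: the condition $\Lambda_Q\ge -r$ becomes $\Lambda_Q\circ\tau\ge -\widehat r$; the half-open window $-s\le \Lambda_0<-s+(q^2+q)$ becomes $0\le \Lambda_0\circ\tau<q^2+q$ (the case $s=0$); the window $-t\le \Lambda_1<-t+(q^2+q)$ becomes $-\widehat t\le \Lambda_1\circ\tau<-\widehat t+(q^2+q)$; and $-u\le \Lambda_P$ becomes $-\widehat u\le \Lambda_P\circ\tau$. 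Each of these is an equivalence, and $\tau$ is a bijection of $\mathbb Z^3$, so $\tau$ restricts to a bijection $\Omega_{r,s,t,u}\to\Omega_{\widehat r,0,\widehat t,\widehat u}$; in particular $\#\Omega_{r,s,t,u}=\#\Omega_{\widehat r,0,\widehat t,\widehat u}$.

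For the identity $r+(q-1)s+(q-1)t+u=\widehat r+(q-1)\widehat t+\widehat u$ the quickest route is to invoke Proposition~\ref{prop:div_equi} itself: the divisors $rQ^{(3)}+sS_0^{(3)}+tS_1^{(3)}+uP^{(3)}$ and $\widehat rQ^{(3)}+\widehat t S_1^{(3)}+\widehat u P^{(3)}$ are linearly equivalent, hence have equal degree, and reading off degrees with $\deg Q^{(3)}=\deg P^{(3)}=1$ and $\deg S_0^{(3)}=\deg S_1^{(3)}=q-1$ (Proposition~\ref{prop:div_deg}) is exactly the claimed equation; alternatively one just substitutes the definitions of $\widehat r,\widehat t,\widehat u$ and verifies that the $\alpha$- and $\beta$-terms cancel while the $s$-term collapses to $(q-1)s$. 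I do not expect a genuine obstacle here: the whole argument rests on the additivity of $\Lambda_Q,\Lambda_0,\Lambda_1,\Lambda_P$ and on identifying $(a,b,c)$ with the coefficient vector of $\Div_3(f)$. The one spot that wants a second look is the re-centring of the half-open $S_0^{(3)}$-window from base point $-s$ to base point $0$, which is what forces the second coordinate of the shift to be $c$ with $\Lambda_0(a,b,c)=-s$.
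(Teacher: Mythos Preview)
Your proposal is correct and is essentially the same as the paper's proof: the paper defines the explicit change of variables $I=i+s-(q^2+q)\alpha-(q^3+q^2)\beta$, $J=j+q\alpha-(1-q^2)\beta$, $K=k+\alpha+q\beta$, which is precisely your translation $\tau(i,j,k)=(i-a,j-b,k-c)$ with $(a,b,c)$ the exponent vector of the function $f$ from Proposition~\ref{prop:div_equi}, and then checks the four conditions transform as required. Your conceptual packaging via the linear forms $\Lambda_\bullet$ and the use of degree invariance for the numerical identity are clean touches that the paper leaves implicit (it simply asserts ``which implies the lemma''), but the underlying argument is identical.
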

\begin{proof}
	Proposition \ref{prop:div_equi} leads us to define the transformation $ {I} = i   + s- (q^2+q)\alpha - (q^3+q^2)\beta $, $ {J} =j + q\alpha - (1-q^2)\beta $ and ${K} =k+ \alpha + q\beta $, then $\Omega_{r,s,t,u }$ becomes
	\begin{align*}
	\Big\{ \,(I,J, K )\,\Big|\, 
	-\widehat r  &\leqslant I ,\,
	0  \leqslant I+ (q^2+q)K <   q^2+q , \\
	-\widehat t & \leqslant q I+ (q^2+q)J+ (q +1)K <-\widehat t+(q^2+q)   ,\\
	-   {\widehat u}  & \leqslant -q^2 I- (q^3-q)J- (q^3+q^2-q-1)K     \,\Big\},
	\end{align*}
	which implies the lemma.
\end{proof}

\begin{lem}\label{lem:set_inv}
	The lattice point set $ \Omega_{r,s,t,u} $ is symmetric, i.e., $ \# \Omega_{r,s,t,u}=\# \Omega_{u,t,s,r} $.
\end{lem}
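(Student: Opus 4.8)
The plan is to construct an explicit automorphism of the lattice $\mathbb{Z}^{3}$ that carries $\Omega_{r,s,t,u}$ bijectively onto $\Omega_{u,t,s,r}$, so the equality of cardinalities becomes immediate. Write $L_{0}(i,j,k)=i$, $L_{1}(i,j,k)=i+(q^{2}+q)k$, $L_{2}(i,j,k)=qi+(q^{2}+q)j+(q+1)k$ and $L_{3}(i,j,k)=-q^{2}i-(q^{3}-q)j-(q^{3}+q^{2}-q-1)k$ for the four linear forms occurring in \eqref{eq:E} as the coefficients of $Q^{(3)}$, $S_{0}^{(3)}$, $S_{1}^{(3)}$, $P^{(3)}$ in $\Div_{3}(x_{1}^{i}w^{j}v^{k})$. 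By definition $\Omega_{r,s,t,u}$ consists of the $(i,j,k)\in\mathbb{Z}^{3}$ with $L_{0}\geqslant -r$, $-s\leqslant L_{1}<-s+(q^{2}+q)$, $-t\leqslant L_{2}<-t+(q^{2}+q)$ and $L_{3}\geqslant -u$; since a principal divisor has degree $0$ and $\deg S_{0}^{(3)}=\deg S_{1}^{(3)}=q-1$, these forms satisfy the single relation $L_{0}+L_{3}+(q-1)(L_{1}+L_{2})=0$.

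First I would introduce the linear map $T\colon\mathbb{Z}^{3}\to\mathbb{Z}^{3}$ given by
\[
T(i,j,k)=\bigl(\,-q^{2}i-(q^{3}-q)j-(q^{3}+q^{2}-q-1)k,\;\;(q-1)i+(q^{2}-q-1)j+(q^{2}-1)k,\;\;i+qj+qk\,\bigr),
\]
and verify the three identities $L_{0}\circ T=L_{3}$, $L_{1}\circ T=L_{2}$, $L_{2}\circ T=L_{1}$ by direct substitution; the fourth identity $L_{3}\circ T=L_{0}$ then drops out of the degree relation $L_{0}+L_{3}+(q-1)(L_{1}+L_{2})=0$. (The factorizations $q^{4}-2q^{2}-q=q(q+1)(q^{2}-q-1)$ and $q^{4}+q^{3}-q^{2}-q=q(q+1)^{2}(q-1)$ are exactly what forces the middle coordinate of $T$ to have integer coefficients.) Since $L_{0},L_{1},L_{2}$ are linearly independent and $L_{i}\circ T^{2}=L_{i}$ for $i=0,1,2$ by those identities, it follows that $T^{2}=\mathrm{id}$; hence $T=T^{-1}$ has an integer matrix and $T\in\mathrm{GL}_{3}(\mathbb{Z})$ is a bijection of $\mathbb{Z}^{3}$.

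With $T$ in hand the lemma is immediate: unwinding the membership $T(i,j,k)\in\Omega_{u,t,s,r}$ through the four identities turns it into $L_{3}(i,j,k)\geqslant -u$, $-t\leqslant L_{2}(i,j,k)<-t+(q^{2}+q)$, $-s\leqslant L_{1}(i,j,k)<-s+(q^{2}+q)$, $L_{0}(i,j,k)\geqslant -r$, which is precisely the defining system of $(i,j,k)\in\Omega_{r,s,t,u}$ (the half-open intervals line up with no off-by-one shift). Hence $T$ restricts to a bijection $\Omega_{r,s,t,u}\to\Omega_{u,t,s,r}$, and $\#\Omega_{r,s,t,u}=\#\Omega_{u,t,s,r}$.

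The only step that is not a routine computation is discovering $T$; it is, however, forced by geometry, and I would record this as motivation. The map $T$ is the action on exponent vectors of the order-two automorphism $\sigma$ of $F^{(3)}$ sending $x_{1}\mapsto 1/x_{3}$, $x_{2}\mapsto 1/x_{2}$, $x_{3}\mapsto 1/x_{1}$, which preserves both Artin--Schreier equations and interchanges $Q^{(3)}\leftrightarrow P^{(3)}$ and $S_{0}^{(3)}\leftrightarrow S_{1}^{(3)}$, so that $\sigma(x_{1}^{i}w^{j}v^{k})$ is again a monomial in $x_{1},w,v$ with exponent vector $T(i,j,k)$. One could alternatively run the whole argument through $\sigma$ and the induced isomorphism $\mathcal{L}(D)\cong\mathcal{L}(\sigma D)$ of Riemann--Roch spaces, but since Proposition \ref{prop:basis} is not yet available at this point in the paper, the self-contained lattice argument above is the cleaner route.
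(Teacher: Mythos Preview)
Your proof is correct and is essentially the same as the paper's: the linear substitution $T(i,j,k)=(I,J,K)$ you write down is exactly the change of variables the paper uses, and both arguments conclude by reading off that the transformed system of inequalities is the defining system of $\Omega_{u,t,s,r}$. Your version is a bit more explicit---you verify $T^{2}=\mathrm{id}$ to justify bijectivity on $\mathbb{Z}^{3}$ and you record the geometric origin of $T$ as the automorphism $x_i\mapsto 1/x_{4-i}$---whereas the paper simply asserts the equivalence of the lattice sets; but the substance of the argument is identical.
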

\begin{proof}
	Let $I = -q^2 i - (q^3-q) j - (q^3+q^2-q-1)k $, $J = (q-1)i +(q^2-q-1)j+(q^2-1)k  $, $K = i + qj +q k$, then 
	$ \Omega_{r,s,t,u} $ is equivalent to 
	\begin{align*}
	\Big\{ \,(I,J,K )\,\Big|\, 
	-r &\leqslant -q^2 I -(q^3-q)J-(q^3+q^2-q-1)K,   \\
	-s & \leqslant qI+(q^2+q)J+(q+1)K <-s+  (q^2+q), \\
	-t & \leqslant I+(q^2+q)K <-t+(q^2+q)   ,    \,
	-u \leqslant  I  \,\Big\},
	\end{align*}
	which is identical with $ \Omega_{u,t,s,r} $.
\end{proof}

\begin{lem}\label{lem:Omega_large}
	Let $ r^{*}: =  q^3-q $, and $u^{*}:= q^3 + q^2   $.
	For  $ r\geqslant r^{*} $, $ u\geqslant u ^{*} $ and $0 \leqslant s,t<q+1$, we have
	$ \# \Omega_{ r, s , t, u } =  \# \Omega_{r ^{*},s, t, u ^{*} }  + (r-r^{*})+ (u-u^{*} ) $.
\end{lem}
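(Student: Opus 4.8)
The plan is to eliminate two of the three coordinates of $\Omega$ and count with respect to the first one. In the equivalent description of $\Omega_{r,s,t,u}$ recorded above, for every integer $i$ the quantities $k=k(i):=\ceil{(-s-i)/(q^2+q)}$ and $j=j(i):=\ceil{(-t-qi-(q+1)k)/(q^2+q)}$ are uniquely determined by the second and third defining conditions, so the map $(i,j,k)\mapsto i$ identifies $\Omega_{r,s,t,u}$ with
\[
\bigl\{\,i\in\mathbb Z \ \big|\ -r\le i \ \text{ and }\ \phi(i)\ge -u\,\bigr\},\qquad \phi(i):=-q^2i-(q^3-q)j(i)-(q^3+q^2-q-1)k(i),
\]
and, crucially, $\phi$ depends only on $s,t$ and $i$, never on $r$ or $u$. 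The first step is to put $\phi$ in closed form. Writing $\delta_1:=i+(q^2+q)k(i)+s$ and $\delta_2:=qi+(q^2+q)j(i)+(q+1)k(i)+t$ --- the ``slacks'' in the second and third defining conditions of $\Omega$, so that $0\le\delta_1,\delta_2<q^2+q$ --- and using $(q^3-q)/(q^2+q)=q-1$ and $(q^2-1)/(q^2+q)=(q-1)/q$ to clear $j$ and $k$, I expect to reach
\[
\phi(i)=-i+(q-1)\bigl(s+t-\delta_1-\delta_2\bigr).
\]
Since $s,t\in\{0,1,\dots,q\}$, this yields the crude bound $\phi(i)>-i-2q^3$ for all $i$, which is all that will be needed.

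For the first reduction, observe that for $r\ge r^*$ the set corresponding to $\Omega_{r^*,s,t,u}$ sits inside the one corresponding to $\Omega_{r,s,t,u}$, the difference being the integers $i$ with $-r\le i<-r^*$ that still satisfy $\phi(i)\ge -u$. I claim this last condition is automatic under the hypotheses: such $i$ obey $-i\ge r^*+1=q^3-q+1$, hence $\phi(i)>-i-2q^3\ge-(q^3+q-1)\ge -u$, using $u\ge u^*=q^3+q^2\ge q^3+q-1$. Therefore exactly $r-r^*$ new lattice points are gained, i.e. $\#\Omega_{r,s,t,u}=\#\Omega_{r^*,s,t,u}+(r-r^*)$.

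For the second reduction, from $u$ down to $u^*$, the variable $u$ now bounds $\phi(i)$ rather than $i$ itself, so a direct count is clumsy; here I would invoke the symmetry Lemma \ref{lem:set_inv} to write $\#\Omega_{r^*,s,t,u}=\#\Omega_{u,t,s,r^*}$ and $\#\Omega_{r^*,s,t,u^*}=\#\Omega_{u^*,t,s,r^*}$. In $\Omega_{u,t,s,r^*}$ the parameter $u$ occupies the first slot, so running the same argument on it --- parametrizing by the first coordinate, with the roles of $s$ and $t$ swapped (harmless, since only $s+t$ enters the bound on $\phi$) --- recasts ``increasing $u$'' as another ``relax a lower bound'' problem. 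Passing from $-u^*$ to $-u$ adds the integers $i$ with $-u\le i<-u^*$, and for these $-i\ge u^*+1=q^3+q^2+1$, so the same crude estimate gives $\phi(i)>-i-2q^3\ge-q^3+q^2+1\ge-(q^3-q)=-r^*$; again the side condition is automatic. Hence $\#\Omega_{u,t,s,r^*}=\#\Omega_{u^*,t,s,r^*}+(u-u^*)$, that is, $\#\Omega_{r^*,s,t,u}=\#\Omega_{r^*,s,t,u^*}+(u-u^*)$. Chaining the two reductions gives the stated formula.

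The main obstacle is computational rather than conceptual: it is the closed-form evaluation of $\phi(i)$, i.e. the algebra collapsing the fourth defining condition of $\Omega$ into $-i+(q-1)(s+t-\delta_1-\delta_2)\ge -u$ once the ceiling expressions for $j$ and $k$ are substituted. With that identity --- and the trivial lower bound on $\phi$ it provides --- in hand, both halves of the lemma become the same elementary observation, namely widening the range of a single integer variable by steps that stay safely inside the region cut out by the remaining, now inert, inequality; and Lemma \ref{lem:set_inv} is exactly what lets the $u$-half be phrased in that form. I would expect the only real care to be needed in tracking $\delta_1,\delta_2$ through the substitution and in checking the few polynomial inequalities in $q$.
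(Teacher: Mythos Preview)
Your proof is correct and follows the same overall strategy as the paper: parametrize $\Omega_{r,s,t,u}$ by the first coordinate $i$, show that the fourth defining inequality is automatically satisfied for $-r\le i<-r^*$ (so that passing from $r^*$ to $r$ adds exactly $r-r^*$ points), and then invoke the symmetry $\#\Omega_{r,s,t,u}=\#\Omega_{u,t,s,r}$ of Lemma~\ref{lem:set_inv} to convert the $u$-reduction into an identical ``first slot'' count. The only cosmetic difference is in how the redundancy of the fourth condition is established: you derive the clean closed form $\phi(i)=-i+(q-1)(s+t-\delta_1-\delta_2)$ via the slacks and bound it crudely, whereas the paper rewrites the expression as $-qi-(q-1)N-(q^3-q)k$ with $N:=qi+(q^2+q)j+(q+1)k$ and combines the bounds $N<q^2+q$, $i<(q^2+q)(1-k)$, and $k\ge q-1$ separately; both routes arrive at the same conclusion.
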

\begin{proof}
	It is clearly that $ \Omega_{r^*,s,t,u}  \subseteq   \Omega_{r,s,t,u} $ for $ r \geqslant r^* $. We have by definition that  the complement of $ \Omega_{r^*,s,t,u} $ in $ \Omega_{r,s,t,u} $ is given by
	\begin{align}
	\Omega_{r,s,t,u}\setminus \Omega_{r^*,s,t,u} =\Big\{\, (i, j, k )\,\Big|\, 
	-r &\leqslant i \leqslant -(r^* +1)\label{eq:set_minus_1}, \tag{$A_1$} \\
	-s & \leqslant i+ (q^2+q)k < -s+ (q^2+q),\label{eq:set_minus_2}  \tag{$A_2$} \\
	-t & \leqslant q i+ (q^2+q)j+ (q +1)k <-t+(q^2+q)   ,\label{eq:set_minus_3}\tag{$A_3$}   \\
	-u & \leqslant -q^2 i- (q^3-q)j- (q^3+q^2-q-1)k \label{eq:set_minus_4} \tag{$A_4$}   \,\Big\}. 
	\end{align}
	Firstly, we shall show that Condition (\ref{eq:set_minus_4}) is invalid under the other conditions. Let $ N: = q i+ (q^2+q)j+ (q +1)k $. We obtain by Condition (\ref{eq:set_minus_3}) that 
	\[ N < -t+(q^2+q) \leqslant q^2+q ,\]
	and by Condition (\ref{eq:set_minus_2}) that 
	\[   i < -s+ (q^2+q)-(q^2+q)k \leqslant (q^2+q)(1-k). \] 	
	The right hand side of (\ref{eq:set_minus_4}) verifies that
	\begin{align}
	&-q^2 i  - (q^3-q)j- (q^3+q^2-q-1)k  \nonumber
	\\
	& =-q  i   - (q-1)N  - (q^3 -q)k \nonumber \\
	& >  (q^2+q)(k - q+ 1 ) - u^* .\label{eq:invalid}
	\end{align}
	It follows from  Conditions (\ref{eq:set_minus_1}) and (\ref{eq:set_minus_2}) that
	\[ k = \ceil{\frac{-s -i}{q^2+q }} \geqslant \ceil{\frac{r^*-q+1 }{q^2+q}} = q-1 .  \] 
	So we can omit Condition (\ref{eq:set_minus_4}) according to Equation (\ref{eq:invalid}). Since Conditions (\ref{eq:set_minus_2}) and (\ref{eq:set_minus_3}) only tell  us that $j$ and $k$ depend on $i$; namely
	\[ k=  \ceil{\frac{-s-i}{q^2+q}}, \quad j=\ceil{\frac{-t-qi-(q+1)k}{q^2+q}} .
	\]
	So we can omit them too.
	Therefore, the set $\Omega_{r,s,t,u}\setminus \Omega_{r^*,s,t,u}$ is equivalent to the lattice point set 
	\begin{equation*}
	\Big\{ \, i \,\Big|\,  -r  \leqslant  i \leqslant -(r^*+1)\,  \Big\}.  
	\end{equation*}
	It follows that
	\begin{equation}\label{eq:Omega_r}
	\# \Omega_{r,s,t,u}-\#  \Omega_{r^*,s,t,u} = \# \left(\Omega_{r,s,t,u}\setminus \Omega_{r^*,s,t,u} \right) = r-r^* .  
	\end{equation}
	Similarly, by Lemma \ref{lem:set_inv} we get 
	\begin{equation}\label{eq:Omega_u}
	\# \Omega_{r ,s,t,u}-\#  \Omega_{r ,s,t,u^* } = u-u^* . 
	\end{equation}
	Combining Equations (\ref{eq:Omega_r}) and (\ref{eq:Omega_u}), we obtain 
	\begin{align*}
	\# \Omega_{r ,s,t,u}-\#  \Omega_{r^*,s,t,u^* } & = \#\Omega_{r,s,t,u}-   \#\Omega_{r^*,s,t,u  }+\#\Omega_{r^*,s,t,u}-\#  \Omega_{r^*,s,t,u^* } \\
	& = (r-r^*) + (u-u^*),
	\end{align*}
	which implies the lemma.
\end{proof}

\begin{lem} \label{lem:Psi_a}
	Let $a$ and $u$ be two non-negative  integers  and let  $ {\Psi}_{a}(u)  $ be a lattice point set 
	\begin{align}
	{\Psi}_{a}(u): =\Big\{\, ( {i}, {j}) \,\Big|\,
	& \, 0 \leqslant   {i} < q^2+q, \label{eq:Psi_a_1}\tag{$B_1$} \\
	& \, 0 \leqslant q {i} +(q^2+q) {j}   < q^2+q , \label{eq:Psi_a_2} \tag{$B_2$}\\
	& \, q^2   {i} + (q^3-q)    {j}     \leqslant   {u}   -(q^3+q^2)a    \, \Big\}.\nonumber
	\end{align}
	Then for $u \geqslant u^*:=q^3+q^2 $, we have
	\[   \sum_{a=0}^{\infty}\# \Psi_a (u)=  ( -q^2 +  q +2  )/2 +  \floor{\frac{u}{q}}   . \]
\end{lem}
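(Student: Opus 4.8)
Dividing condition (\ref{eq:Psi_a_2}) by $q$ turns it into $0\leqslant i+(q+1)j<q+1$, so $j$ is uniquely determined by $i$. Writing $i=(q+1)m+\ell$ with $0\leqslant\ell\leqslant q$ (Euclidean division by $q+1$), condition (\ref{eq:Psi_a_1}) is equivalent to $0\leqslant m\leqslant q-1$, and one checks directly that $j=-m$ (for $\ell=0$ this is immediate, and for $1\leqslant\ell\leqslant q$ the quantity $-i/(q+1)$ lies in $(-m-1,-m)$). Substituting $i=(q+1)m+\ell$ and $j=-m$, the unwieldy expression $q^2 i+(q^3-q)j$ collapses to $(q^2+q)m+q^2\ell$. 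Hence $\#\Psi_a(u)$ counts the pairs $(m,\ell)$ with $0\leqslant m\leqslant q-1$, $0\leqslant\ell\leqslant q$ and $(q^3+q^2)a+(q^2+q)m+q^2\ell\leqslant u$, and therefore
\[\sum_{a=0}^{\infty}\#\Psi_a(u)=\#\bigl\{(a,m,\ell)\;:\;a\geqslant0,\ 0\leqslant m\leqslant q-1,\ 0\leqslant\ell\leqslant q,\ (q^3+q^2)a+(q^2+q)m+q^2\ell\leqslant u\bigr\}.\]

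\textbf{Step 2: Collapse to a single inequality.}
Factoring $q$ out of the last inequality, it reads $q\bigl[(q+1)(qa+m)+q\ell\bigr]\leqslant u$, i.e.\ $(q+1)(qa+m)+q\ell\leqslant\floor{u/q}$ since the bracketed quantity is an integer. The map $(a,m)\mapsto b:=qa+m$ is a bijection from $\{a\geqslant0\}\times\{0,\dots,q-1\}$ onto $\mathbb{Z}_{\geqslant0}$, so with $U:=\floor{u/q}$ (note $u\geqslant u^*=q^3+q^2$ gives $U\geqslant q^2+q$) we obtain
\[\sum_{a=0}^{\infty}\#\Psi_a(u)=N(U):=\#\bigl\{(b,\ell)\;:\;b\geqslant0,\ 0\leqslant\ell\leqslant q,\ (q+1)b+q\ell\leqslant U\bigr\}.\]

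\textbf{Step 3: Count $N(U)$ by first differences.}
For $U\geqslant1$, the difference $N(U)-N(U-1)$ equals the number of solutions of $(q+1)b+q\ell=U$ with $b\geqslant0$ and $0\leqslant\ell\leqslant q$. Since $\gcd(q,q+1)=1$ and $\{0,\dots,q\}$ is a complete residue system modulo $q+1$, there is exactly one $\ell$ in range with $\ell\equiv-U\pmod{q+1}$; then $b=(U-q\ell)/(q+1)\in\mathbb{Z}$, and $b\geqslant0$ holds as soon as $U\geqslant q\ell$, which is automatic when $U\geqslant q^2$ because $\ell\leqslant q$. Thus $N(U)-N(U-1)=1$ for every $U\geqslant q^2$. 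Telescoping from $q^2-1$ up to $U\geqslant q^2$ gives $N(U)=N(q^2-1)+U-q^2+1$, and a direct summation finishes: the slice $\ell=q$ contributes nothing to $N(q^2-1)$, while for $0\leqslant\ell\leqslant q-1$ there are exactly $q-\ell$ admissible $b$ (using $(q^2-1-q\ell)/(q+1)=(q-1)-\ell+\ell/(q+1)$), so $N(q^2-1)=\sum_{\ell=0}^{q-1}(q-\ell)=q(q+1)/2$. Hence
\[\sum_{a=0}^{\infty}\#\Psi_a(u)=N(U)=\frac{q(q+1)}{2}+U-q^2+1=\floor{\frac{u}{q}}+\frac{-q^2+q+2}{2},\]
which is the claim.

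\textbf{Where the work lies.}
The argument is entirely elementary bookkeeping; the two spots that need a little care are the reduction in Step 1 (verifying that (\ref{eq:Psi_a_1})--(\ref{eq:Psi_a_2}) genuinely force $j=-m$ and that the coefficient $q^2i+(q^3-q)j$ simplifies so cleanly) and the observation in Step 3 that the first difference of $N$ stabilizes to $1$ exactly on the range $U\geqslant q^2$, which is guaranteed by the hypothesis $u\geqslant u^*$. One could alternatively evaluate $N(U)$ as the number of lattice points in a planar trapezoid via Pick's theorem, as the surrounding roadmap suggests, but the first-difference computation sidesteps having to track whether the vertices of that trapezoid are integral.
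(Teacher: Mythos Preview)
Your proof is correct and follows essentially the same route as the paper: both write $i=(q+1)m+\ell$ to force $j=-m$, simplify $q^2i+(q^3-q)j$ to $(q^2+q)m+q^2\ell$, merge $a$ and $m$ into a single nonnegative integer $b=qa+m$, and then count by first differences. The only cosmetic difference is that you divide through by $q$ up front (working with $U=\lfloor u/q\rfloor$ and anchoring the telescoping at $U=q^2-1$), whereas the paper keeps $u$ and anchors at $u=u^*$ via a Pick-type triangle count; your choice is slightly cleaner but not a different idea.
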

\begin{proof}
	According to Condition (\ref{eq:Psi_a_1}), we may assume $ i=c(q+1)+d $, with $0 \leqslant c < q$, and $0 \leqslant d < q+1$. Then Condition (\ref{eq:Psi_a_2}) can be restated as
	\[ 0 \leqslant  q d + (q^2+q)(j+c) < q^2+q ,\]
	which is equivalent to  $j=-c$. Hence,  we get
	\begin{align*}
	{\Psi}_{a}(u) \cong \Big\{ \,( c,d )\,\Big|\, 
	0 \leqslant c < q,  \,  0 \leqslant d   < q+1, \,
   {(q^2+q)(qa + c )}  +	q^2 d    \leqslant   {u}  \,\Big\}.
	\end{align*}
	
	Let $e= qa+ c$. We find that	$ {\Psi}_{a}(u)  $ is equivalent to 
	\begin{align}
	{\Psi}_{a}^{\prime} (u) := \Big\{ \,(d,e) \,\Big|\,  
	\,q a & \leqslant e < q+qa, \label{eq:a1} \tag{$ C_1 $} \\
	0 & \leqslant d   < q+1,\label{eq:a2}  \tag{$ C_2 $}  \\
	& \hspace{-10pt} (q^2   +q)e   +	q^2 d    \leqslant   {u}    \,\Big\}.\label{eq:a3}  \tag{$ C_3 $} 
	\end{align}
	For abbreviation we set $\Psi (u) := \bigcup_{a=0}^{\infty }{\Psi}_{a}^{\prime}(u)    $.
	Note that Conditions (\ref{eq:a2}) and (\ref{eq:a3}) are  independent of $ a$. So we can combine Condition (\ref{eq:a1}) for $ a \geqslant 0$. We can write down Condition (\ref{eq:a1})  for various $ a $  as follows
	\begin{align*}
	0 \leqslant e &< q\hphantom{1} \quad  \text{for } a=0,\\
	q \leqslant e &<2 q \quad \text{for } a=1,\\
	2q \leqslant e &<3 q \quad \text{for } a=2,\\
	& \cdots 
	\end{align*}
	This gives a total condition   $ e \geqslant 0$ and   a new expression for $ \Psi (u) $
	\begin{align*}
	\Psi(u) := \Big\{ \,(d,e)\,\Big|\,    0 \leqslant e ,   \, 0 \leqslant d   < q+1, \, {(q^2+q)e}  +	q^2 d    \leqslant   {u}    \,\Big\}.
	\end{align*}
	
	Using the fact that $ \Psi(u^* ) $ contains exactly the points of the triangle $ \triangle OAB $ except the vertex $B$, one can easily show that
	\begin{equation}
	\# \Psi(u^* )   = \# \triangle OAB -1   =  ( q^2 +3 q +2  )/2.
	\end{equation}
	Denote by $ M_{\lambda}  $ the lattice point set 
	\begin{align*}
	M_{\lambda} := \Big\{ \,(d,e)\,\Big|\,    0 \leqslant d   < q+1,  \,  {(q^2+q)e}  +	q^2 d  = {\lambda}  \, \Big\}.
	\end{align*}
	Then we obtain
	\[
	\Psi(u)=\Psi(u^* ) \bigcup  \left(  \bigcup_{{\lambda}=u^*+1  }^{u} M_{\lambda} \right) .
	\]
	The set $ M_{\lambda} $ is non-empty if and only if $   q \mid \lambda $. For such $\lambda $,  there exists only one lattice point on   $ M_{\lambda } $.
	So we have
	\[  
	\# M_{\lambda}  =\begin{cases}
	1  & \text{for $ q \mid \lambda $, }\\
	0  & \text{for $ q \nmid  \lambda  $. } 
	\end{cases}
	\]
	Then we get
	\[ \# \Psi(u) =    \# \Psi(u^* )+  \floor{\frac{u-u^* }{q}}  =  ( -q^2 +  q +2  )/2 +  \floor{\frac{u}{q}} ,  \]
	which finishes the proof.
\end{proof}
\begin{figure}[H]
	\centering
	\begin{tikzpicture}[scale=0.40]
	\path [fill=orange] (0,0)--(0,4)--(5,0)--(0,0);
	\draw [->](0,-1)--(0,8);
	\draw [dashed] (5,-1)--(5,8);
	\draw [->](-1,0)--(7,0);
	\draw [blue](0,4)--(5,0);
	\draw [red](0,7)--(5,3);
	\draw [fill] (0,0) circle [radius=0.1];
	
	\draw[color=black] node [yshift=-1ex,xshift=+1ex]  at (5,0) { $ B $ };		
	\draw[color=black] node [yshift=0ex,xshift=-1ex] at (0,4){ $ A $ };
	\draw[color=black] node [yshift=-1ex,xshift=-1ex] at (0,0){ $ O $ };
	\draw[color=black] node   at (-1.8,2){ $ \Psi(u^*) $ };
	\draw [->]   (-0.8, 1.6)--(0.5,1.2);
	\draw[color=black] node   at (3,6.8){ $ M_{\lambda} $ };
	\draw [->]   (2.5, 6.5)--(1.7,5.7);
	\draw[color=black] node [yshift=0ex,xshift=+1ex] at (7,0){ $ d $ };
	\draw[color=black] node [yshift=0ex,xshift=-1ex] at (0,8){ $ e $ };
	\draw [fill] (0,0) circle [radius=0.1];
	\draw [fill] (0,1) circle [radius=0.1];
	\draw [fill] (0,2) circle [radius=0.1];
	\draw [fill] (0,3) circle [radius=0.1];
	\draw [fill] (0,4) circle [radius=0.1];
	\draw [fill] (0,5) circle [radius=0.1];
	\draw [fill] (0,6) circle [radius=0.1];
	\draw [fill] (0,7) circle [radius=0.1];
	\draw [fill] (1,0) circle [radius=0.1];
	\draw [fill] (1,1) circle [radius=0.1];
	\draw [fill] (1,2) circle [radius=0.1];
	\draw [fill] (1,3) circle [radius=0.1];
	\draw [fill] (1,4) circle [radius=0.1];
	\draw [fill] (1,5) circle [radius=0.1];
	\draw [fill] (1,6) circle [radius=0.1];
	
	\draw [fill] (2,0) circle [radius=0.1];
	\draw [fill] (2,1) circle [radius=0.1];
	\draw [fill] (2,2) circle [radius=0.1];
	\draw [fill] (2,3) circle [radius=0.1];
	\draw [fill] (2,4) circle [radius=0.1];
	\draw [fill] (2,5) circle [radius=0.1];
	
	\draw [fill] (3,0) circle [radius=0.1];
	\draw [fill] (3,1) circle [radius=0.1];
	\draw [fill] (3,2) circle [radius=0.1];
	\draw [fill] (3,3) circle [radius=0.1];
	\draw [fill] (3,4) circle [radius=0.1];
	
	\draw [fill] (4,0) circle [radius=0.1];
	\draw [fill] (4,1) circle [radius=0.1];
	\draw [fill] (4,2) circle [radius=0.1];
	\draw [fill] (4,3) circle [radius=0.1];
	
	\end{tikzpicture}
	\protect\caption{the lattice point set $\Psi(u)$ with $q=4$}
\end{figure}
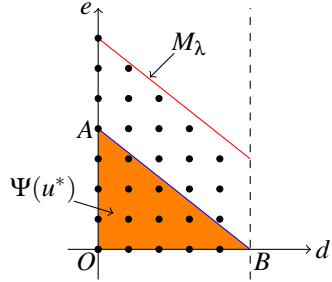

\begin{lem} \label{lem:Phi_b}
	Let $b $ be an integer such that $ 0 \leqslant b < q-1 $ and let $t$ be an non-negative integer.  Suppose that $ 	{\Phi}_{ b }^{(1)} $ is a lattice point set
	\begin{align*}
	{\Phi}_{ b }^{(1)}:=\Big\{\, ( {i}, {j}) \,\Big|\,
	\,    0  \leqslant    {i} < q^2+q, \,\, - t- (q+1)b \leqslant  q  {i} +(q^2+q) {j}  < 0 \, \Big\}.
	\end{align*}
	Then  we obtain
	\[    \sum_{b=0}^{q-1} \# {\Phi}_{ b }^{(1)}=q^2 (q-1)/2 +q t  . \]
\end{lem}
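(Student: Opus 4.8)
The plan is to evaluate $\#\Phi_b^{(1)}$ in closed form for each fixed $b$ and then sum over $b$. Following the substitution already used in the proof of Lemma~\ref{lem:Psi_a}, I would record a point of $\Phi_b^{(1)}$ by the value $n := qi + (q^2+q)j$; since $j$ is then recovered as $(n-qi)/(q^2+q)$, the set $\Phi_b^{(1)}$ is in bijection with the set of pairs $(i,n)$ satisfying $0 \leqslant i < q^2+q$, $-t-(q+1)b \leqslant n < 0$ and $n \equiv qi \pmod{q^2+q}$. Consequently
\[
\#\Phi_b^{(1)} = \sum_{-t-(q+1)b\,\leqslant\, n\,<\,0} \#\{\, i : 0 \leqslant i < q^2+q,\ qi \equiv n \pmod{q^2+q} \,\}.
\]

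The key step is the inner count. Writing $q^2+q = q(q+1)$, the congruence $qi \equiv n \pmod{q(q+1)}$ is solvable only when $q \mid n$, and if $n = qn'$ it reduces to $i \equiv n' \pmod{q+1}$, which has exactly $q$ solutions $i$ in $\{0,1,\dots,q(q+1)-1\}$. Hence each $n$ divisible by $q$ contributes $q$ to the sum and all other $n$ contribute $0$. The negative multiples of $q$ lying in the interval $[-t-(q+1)b,\,0)$ are $-q,-2q,\dots,-q\floor{(t+(q+1)b)/q}$, so there are $\floor{(t+(q+1)b)/q}$ of them. Using $(q+1)b = qb+b$ this gives the closed form
\[
\#\Phi_b^{(1)} = q\floor{\frac{t+(q+1)b}{q}} = qb + q\floor{\frac{t+b}{q}}.
\]

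Finally I would sum this identity over $b = 0,1,\dots,q-1$. The first summand contributes $q\sum_{b=0}^{q-1}b = q^2(q-1)/2$. For the second, writing $t = aq+s$ with $0 \leqslant s < q$ one has $\floor{(t+b)/q} = a + \floor{(s+b)/q}$, where $\floor{(s+b)/q}$ equals $1$ for exactly the $s$ indices $b \in \{q-s,\dots,q-1\}$ and $0$ otherwise; hence $\sum_{b=0}^{q-1}\floor{(t+b)/q} = qa+s = t$ (this is Hermite's identity). Adding the two contributions yields $q^2(q-1)/2 + qt$, which is the asserted value. The only genuinely non-formal point in the argument is the inner count of solutions of $qi \equiv n \pmod{q^2+q}$ — that is, the observation that $i \mapsto qi \bmod (q^2+q)$ is a $q$-to-one map onto the multiples of $q$ in $\{0,\dots,q^2+q-1\}$ — and everything else is a short finite computation, so I do not anticipate any serious obstacle.
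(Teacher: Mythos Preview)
Your proof is correct and follows essentially the same route as the paper: the paper slices $\Phi_b^{(1)}$ into the sets $L_\lambda=\{(i,j):0\leqslant i<q^2+q,\ qi+(q^2+q)j=-\lambda\}$, observes $\#L_\lambda=q$ when $q\mid\lambda$ and $0$ otherwise, and obtains $\#\Phi_b^{(1)}=q\floor{(t+(q+1)b)/q}$ before summing over $b$ exactly as you do. Your explicit invocation of Hermite's identity is the same computation the paper carries out in the last line of its proof.
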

\begin{proof}
	Denote by $L_\lambda $ the lattice point set 
	\begin{align*}
	L_{\lambda}:=\Big\{\,(i,j)\,\Big|\,0 \leqslant i< q^2+q, \,\,
	qi+(q^2+q)j = -  \lambda  \, \Big\}.
	\end{align*} 
	The set $ L_{\lambda} $ is non-empty if and only if $ q \mid \lambda   $. For such $ \lambda $,  there exist $q$ lattice points on   $ L_{\lambda } $.
	So we have
	\[  
	\# L_{\lambda}  =\begin{cases}
	q  & \text{for $ q \mid \lambda $, }\\
	0  & \text{for $ q \nmid \lambda $. } 
	\end{cases}
	\]
	Using the fact that
	\[  {\Phi}_{ b }^{(1)}= \bigcup_{\lambda=1}^{t  + (q+1)b}{L_\lambda} ,\]
	we obtain
	\begin{align*}
	\sum_{b=0}^{q-1} \# {\Phi}_{ b }^{(1)} & =\sum_{b=0}^{q-1} \left(\sum_{\lambda =1}^{ t +(q+1)b} \# L_{\lambda} \right) \\
	&=\sum_{b=0}^{q-1} \floor{\frac{{t +(q+1)b}}{q}} q \\
	&=q \sum_{b=0}^{q-1} b +q \sum_{b=0}^{q-1} \floor{\frac{{t + b}}{q}}   \\
	&=q^2 (q-1)/2 +q t  . \\
	\end{align*}
	This completes the proof.
\end{proof}

\begin{lem} \label{lem:Omega_main}
	Let $r^{*}= q^3-q$, $u^{*}:= q^3 + q^2 $. Then we have for $0\leqslant t < q+1$  
	\[ \# \Omega_{ r^*, 0 , t, u^{*} } = 1-g^{(3)} +r^* + (q-1)t +u^{*}  .\] 
\end{lem}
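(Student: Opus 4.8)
The plan is to turn this three-dimensional lattice count into the two planar counts already disposed of in Lemmas~\ref{lem:Psi_a} and~\ref{lem:Phi_b}. Recall first that $g^{(3)}=q^3-2q+1$ by Proposition~\ref{prop:div_deg}(3), so the claimed value of $\#\Omega_{r^*,0,t,u^*}$ is simply $q^3+q^2+q+(q-1)t$. In the description of $\Omega_{r^*,0,t,u^*}$ (put $s=0$ in \eqref{eq:O1}) the second and third inequalities confine the forms $i+(q^2+q)k$ and $qi+(q^2+q)j+(q+1)k$ to half-open intervals of length $q^2+q$; hence, for each admissible $i$, they determine $k=\ceil{-i/(q^2+q)}$ and then $j$ by an explicit ceiling. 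Setting $i':=i+(q^2+q)k\in\{0,\dots,q^2+q-1\}$ and $m:=-k$, the constraint $i\geqslant-r^{*}=-(q^2+q)(q-1)$ becomes exactly $m\geqslant 1-q$, and — using $q^3-q=(q-1)(q^2+q)$ — the remaining pole bound at $P^{(3)}$ simplifies to
\[ q\,i'+(q-1)\,\tau+(q^2+q)\,m\;\leqslant\;q^3+q^2, \]
where $\tau$ denotes the window-determined $S_1^{(3)}$-coefficient $qi+(q^2+q)j+(q+1)k\in[-t,\,q^2+q-t)$. So after this reduction one is just counting lattice points of a planar region.

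I would then split this planar set according to an appropriate linear form — most naturally the sign of $\tau$ (equivalently of $qi+(q^2+q)j$). On the block where this form is nonnegative, absorbing the ``level'' parameter into one coordinate exactly as in the substitution $e=qa+c$ from the proof of Lemma~\ref{lem:Psi_a} should produce an affine, lattice-point-preserving bijection onto $\bigsqcup_{a\geqslant 0}\Psi_a(u^{*})$; on the block where it is negative one should recognize $\bigsqcup_{b=0}^{q-1}\Phi_b^{(1)}$; and what remains is an elementary set of points near the corner of the region, which I expect to enumerate by hand and whose cardinality is forced by the final arithmetic to be $\tfrac{(q-1)(q+1)(q+2)}{2}-t$. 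Granting this decomposition,
\[ \#\Omega_{r^{*},0,t,u^{*}}=\sum_{a\geqslant 0}\#\Psi_a(u^{*})+\sum_{b=0}^{q-1}\#\Phi_b^{(1)}+\Big(\tfrac{(q-1)(q+1)(q+2)}{2}-t\Big). \]

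To finish, substitute the two evaluations: Lemma~\ref{lem:Psi_a} at $u=u^{*}$ gives $\sum_{a\geqslant 0}\#\Psi_a(u^{*})=\tfrac{-q^2+q+2}{2}+\floor{(q^3+q^2)/q}=\tfrac{q^2+3q+2}{2}$, and Lemma~\ref{lem:Phi_b} gives $\sum_{b=0}^{q-1}\#\Phi_b^{(1)}=\tfrac{q^2(q-1)}{2}+qt$. Adding the three contributions, the numerator collapses to $2q^3+2q^2+2q$ and the $t$-terms to $(q-1)t$, so $\#\Omega_{r^{*},0,t,u^{*}}=q^3+q^2+q+(q-1)t$. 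Since $1-g^{(3)}+r^{*}+(q-1)t+u^{*}=1-(q^3-2q+1)+(q^3-q)+(q-1)t+(q^3+q^2)$ equals the same quantity, the lemma follows.

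The real work is the middle step: exhibiting the decomposition and verifying that the three blocks partition $\Omega_{r^{*},0,t,u^{*}}$ with no double-counting or omission. Eliminating $j$ and $k$ introduces congruences modulo $q$ and modulo $q+1$ that the new coordinates $i',m$ must respect, the interval for $\tau$ is shifted by $-t$, and the oblique boundary $q\,i'+(q-1)\tau+(q^2+q)m=q^3+q^2$ meets the lattice only after several floor corrections; confirming that all of these corrections telescope precisely into the residual term $\tfrac{(q-1)(q+1)(q+2)}{2}-t$ is the delicate part. (Lemma~\ref{lem:set_inv}, the symmetry $\#\Omega_{r,s,t,u}=\#\Omega_{u,t,s,r}$, may shorten some of this bookkeeping.)
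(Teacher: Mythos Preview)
Your reduction to planar coordinates and your target arithmetic are both correct and match the paper's opening move. The gap is exactly where you flag it: you claim bijections from the $\tau\geqslant 0$ and $\tau<0$ blocks onto $\bigsqcup_{a\geqslant 0}\Psi_a(u^*)$ and $\bigsqcup_b\Phi_b^{(1)}$ respectively, but these cannot hold as stated. On your positive block $\tau$ lies in the $t$-dependent interval $[0,q^2+q-t)$, whereas $\sum_a\#\Psi_a(u^*)=(q^2+3q+2)/2$ is independent of $t$; on the negative block $\tau\in[-t,0)$, whereas $\Phi_b^{(1)}$ uses the $b$-dependent window $[-t-(q+1)b,0)$. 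These are not the same shapes, so no affine bijection exists. Your ``residual'' is then reverse-engineered from the desired answer rather than exhibited as an actual set of lattice points, so at this point nothing has been proved.

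The paper supplies the missing device: an averaging over an auxiliary parameter. After writing $k=-qa+b$ with $0\leqslant b<q$, it lets $u$ run over $u^*+(q^2+q)l$ for $l=0,\dots,q-1$ and sums over $l$. Splitting by the sign of $q\widetilde{i}+(q^2+q)\widetilde{j}$ and then shifting $\widetilde{j}\mapsto\widetilde{j}+1$ on the negative piece changes the pole bound by $q^3-q=(q-1)(q^2+q)$, which amounts to cycling the $l$-index; summing over the full period $l=0,\dots,q-1$ makes these shifts telescope, so the pieces reassemble exactly into the $\Psi$-sets of Lemma~\ref{lem:Psi_a} together with the boundary terms $\Phi^{(1)}_{0,b,l}$ covered by Lemma~\ref{lem:Phi_b}. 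On the other side Lemma~\ref{lem:Omega_large} gives $\sum_{l=0}^{q-1}\#\Omega_{r^*,0,t,u^*+(q^2+q)l}=q\,\#\Omega_{r^*,0,t,u^*}+(q^4-q^2)/2$, and one solves this linear equation for $\#\Omega_{r^*,0,t,u^*}$. Without the $l$-summation the interval boundaries never align and no clean residual emerges.
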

\begin{proof}
	It is important to write $k=-q a + b$ with $a, b \in \mathbb{Z}$ and $0 \leqslant b <q  $.  We define
	\[\Theta_{k}:=\Theta_{a,b}:=\Big\{\,(i,j)\,\Big|\,(i,j,-qa+b)\in \Omega_{r^{*}, 0, t , u}\,\Big\} . \]
	Precisely speaking, $ \Theta_{k} $ is given by 
	\begin{align}
	\Theta_{k} =\Big\{ \,(i, j  )\,\Big|\, 
	-r^{*} &\leqslant i,\label{Omega_1} \tag{$ D_1 $}\\
	0 & \leqslant i+ (q^2+q)k < (q^2+q),\label{Omega_2}\tag{$ D_2 $}\\
	-t  & \leqslant q i+ (q^2+q)j+ (q +1)k <-t +(q^2+q)\tag{$ D_3 $}   ,\\
	-u & \leqslant -q^2 i- (q^3-q)j- (q^3+q^2-q-1)k     \, \Big\}.\label{Omega_4}\tag{$ D_4 $}
	\end{align}
	So we have $k = \ceil{\frac{-i}{q^2+q}} \leqslant \ceil{\frac{r^*}{q^2+q}}  = q-1$ and therefore  $\Theta_k$ is empty for $k > q-1$. While for $k \leqslant q-1$, Condition (\ref{Omega_2}) implies that
	\[  i\geqslant -(q^2+q)(q-1)= -r^*. \] 
	So Condition (\ref{Omega_1}) is invalid. Then we have 
	\begin{equation}\label{eq:Omega_sum}
	\Omega_{r^*, 0, t, u}= \bigcup_{k=-\infty}^{q-1} \Theta_{k}  = \bigcup_{b=0}^{q-1}\bigcup_{a=0}^{\infty} \Theta_{a,b}  .
	\end{equation}

	Let $ \widetilde{i} : = i + (q^2+q) k   $, $ \widetilde{j} := j - q k - a $, and $u =u^{*} + l (q^2+q) $ with $ 0 \leqslant l < q $. Then $\Theta_{a,b}$ becomes
	\begin{align}
	{\Phi}_{a,b,l}=\Big\{\, (\widetilde{i},\widetilde{j}) \,\Big|\,
	& 0 \leqslant   \widetilde{i}  < q^2+q, \nonumber \\
	& - t - (q+1)b   \leqslant q \widetilde{i} +(q^2+q)\widetilde{j}   <  (q^2+q)  - \left(t  + (q+1)b\right), \label{eq:Phi_2} \tag{$ E_2 $}\\
	& q^2  \widetilde{i} + (q^3-q)    \widetilde{j}     \leqslant   u^{*}+ (q^2+q)l + (q+1)b -(q^3+q^2)a  \, \Big\}. \nonumber
	\end{align}
	By Equation (\ref{eq:Omega_sum}) and Lemma \ref{lem:Omega_large} we obtain
	\begin{equation} \label{eq:Omega_sum_l}
	\sum_{b=0}^{q-1}\sum_{a=0}^{\infty} \# \Phi_{a,b,l}=\# \Omega_{r^*, 0,t ,u^{*}+ (q^2+q)l} = \# \Omega_{r^*, 0,t ,u^{*}}+ (q^2+q)l.
	\end{equation}
	Condition (\ref{eq:Phi_2}) can be split into two parts; namely,
	\begin{align*}
	\text{Part (1): } &-\left(t  +(q+1)b \right) \leqslant q\widetilde{i} + (q^2+q)\widetilde{j} < 0 , \\
	\intertext{and} 
	\text{Part (2): }& 0 \leqslant q\widetilde{i} + (q^2+q)\widetilde{j} <  (q^2+q)  - t - (q+1)b  .
	\end{align*}   
	Then we obtain
	\[ \Phi_{a,b,l}= \Phi^{(1)} _{a,b,l} \bigcup \Phi^{(2)} _{a,b,l} ,\]
	where 
	\begin{align}
	{\Phi}_{a,b,l}^{(1)}=\Big\{\, (\widetilde{i},\widetilde{j}) \,\Big|\,
	& 0 \leqslant   \widetilde{i} < q^2+q, \nonumber \\
	& - t - (q+1)b  \leqslant q \widetilde{i} +(q^2+q)\widetilde{j}   < 0, \nonumber  \\
	&  q^2  \widetilde{i} + (q^3-q)    \widetilde{j}     \leqslant   u^{*} +(q^2+q)l + (q+1)b  -(q^3+q^2)a    \, \Big\}, \label{eq:Phi_3} \tag{$ F_3 $}
	\end{align}
	and
	\begin{align*}
	{\Phi}_{a,b,l}^{(2)}=\Big\{\, (\widetilde{i},\widetilde{j})\,\Big|\,
	& 0 \leqslant   \widetilde{i}  < q^2+q, \\
	& 0  \leqslant q \widetilde{i} +(q^2+q)\widetilde{j}   <  (q^2+q)  - \left(t + (q+1)b\right),  \\
	& q^2  \widetilde{i} + (q^3-q)    \widetilde{j}     \leqslant   u^{*} +(q^2+q)l + (q+1)b  -(q^3+q^2)a    \, \Big\}.
	\end{align*}
	Let $  \widehat{j}= \widetilde{j} +1 $, then $ {\Phi}_{a,b,l}^{(1)} $ is equivalent to 
	\begin{align*}
	{\widehat \Phi}_{a,b,l}^{(1)}=\Big\{ \,(\widetilde{i},\widehat{j}) \,\Big|
	\,  & 0 \leqslant   \widetilde{i}  < q^2+q, \\
	& (q^2+q)- \left(t + (q+1)b\right)  \leqslant q \widetilde{i} +(q^2+q)\widehat{j}   < q^2+q  ,  \\
	& q^2  \widetilde{i} + (q^3-q)    \widehat{j}     \leqslant   u^{*} +(q^2+q)(l+q-1) + (q+1)b -(q^3+q^2)a   \, \Big\}. 
	\end{align*}
	We set 
	\begin{align*}
	{\Psi}_{a,b,l} = \Big\{ \,(\widetilde{i},\widetilde{j}) \,\Big|\,  &
	0 \leqslant   \widetilde{i}  < q^2+q, \\
	& 0  \leqslant q \widetilde{i} +(q^2+q)\widetilde{j}   <  (q^2+q)   ,\\
	& q^2  \widetilde{i} + (q^3-q)    \widetilde{j}     \leqslant   u^{*}+(q^2+q)l + (q+1)b  -(q^3+q^2)a   \, \Big\}.
	\end{align*}
	Then we can split the set $ {\Psi}_{a,b,l} $   as follows
	\begin{align*}
	\Psi_{a,b,q-1} & =   \widehat\Phi_{a,b,0}^{(1)}    \cup   \Phi_{a,b,q-1}^{(2)},   \\
	\Psi_{a,b,0} &=  \widehat\Phi_{a+1,b,1}^{(1)} \cup   \Phi_{a,b,0}^{(2)}, \\
	\Psi_{a,b, 1} &= \widehat\Phi_{a+1,b,2}^{(1)} \cup   \Phi_{a,b,1}^{(2)},    \\
	& \cdots, \\
	\Psi_{a,b,q-2} &=\widehat \Phi_{a+1,b,q-1}^{(1)} \cup   \Phi_{a,b,q-2}^{(2)}  . \\
	\end{align*}
	This implies 
	\[  
	\sum_{l=0}^{q-1} \# \Psi_{a,b,l} =  \# \widehat \Phi^{(1)}_{a ,b,0}+ \sum_{l=1}^{q-1} \# \widehat \Phi^{(1)}_{a+1,b,l}+\sum_{l=0}^{q-1} \# \Phi^{(2)}_{a,b,l}  .  
	\]
	Using this equation, we conclude that
	\begin{align}
	\sum_{b=0}^{q-1}\sum_{l=0}^{q-1}\sum_{a=0}^{\infty} \# \Phi_{a,b,l} & = \sum_{b=0}^{q-1}\sum_{l=0}^{q-1}\sum_{a=0}^{\infty} \# \Phi_{a,b,l} ^{(1)} + \sum_{b=0}^{q-1}\sum_{l=0}^{q-1}\sum_{a=0}^{\infty} \# \Phi_{a,b,l} ^{(2)} \nonumber \\
	& = \sum_{b=0}^{q-1} \sum_{a=0}^{\infty} \# \widehat \Phi_{a,b,0} ^{(1)} +   
	\sum_{b=0}^{q-1}\sum_{l=1}^{q-1}\left(    \# \widehat \Phi_{0,b,l} ^{(1)} + \sum_{a=1}^{\infty}\# \widehat \Phi_{a,b,l} ^{(1)} 
	\right)      + \sum_{b=0}^{q-1}\sum_{l=0}^{q-1}\sum_{a=0}^{\infty} \# \Phi_{a,b,l} ^{(2)}\nonumber \\
	& =  \sum_{b=0}^{q-1}\sum_{l=0}^{q-1}\sum_{a=0}^{\infty} \# \Psi_{a,b,l} +  \sum_{b=0}^{q-1}\sum_{l=1}^{q-1}  \# \Phi_{0,b,l} ^{(1)} . \label{eq:sum_all}
	\end{align}
	
	In the following we give the explicit calculation of Equation \eqref{eq:sum_all}. 
	From the proof of Lemma \ref{lem:Omega_large}, we see that when $ a=0 $  the condition (\ref{eq:Phi_3}) is invalid.
	Applying Lemma \ref{lem:Phi_b} we get
	\[  \sum _{b=0}^{q-1} \# \Phi_{0,b,l} ^{(1)} = {{q t +q^2 (q-1)/2 }} ,  \]
	and then we have
	\begin{equation} \label{eq:sum_right_2}
	\sum_{b=0}^{q-1}\sum_{l=1}^{q-1}  \# \Phi_{0,b,l} ^{(1)}  = (q-1)qt  +q^2 (q-1)^2/2. 
	\end{equation}
	By Lemma \ref{lem:Psi_a} we have
	\[   \sum_{a=0}^{\infty} \# \Psi_{a,b,l}   =   ( -q^2 +  q +2  )/2 +  \floor{\frac{u^{* } + (q^2+q)l +(q+1)b }{q}},  \]
	and therefore
	\begin{align}
	\sum_{b=0}^{q-1}\sum_{l=0}^{q-1}\sum_{a=0}^{\infty} \# \Psi_{a,b,l}  &= \sum_{b=0}^{q-1}\sum_{l=0}^{q-1}\left( ( -q^2 +  q +2  )/2 +  \floor{\frac{u^{* } + (q^2+q)l +(q+1)b }{q}}  \right) \nonumber\\
	&=q^2 \left(   -q^2 +  q +2    \right)/2 + \sum_{b=0}^{q-1}\sum_{l=0}^{q-1} ((q+1)l+b)  + \sum_{l=0}^{q-1} \sum_{b=0}^{q-1} \floor{\frac{u^*+b}{q}}  \nonumber  \\
	&=q^2 \left(  -q^2 +  q +2  \right) /2 + q\sum_{b=0}^{q-1} b + (q+1)q\sum_{l=0}^{q-1}l  + \sum_{l=0}^{q-1} u^* \nonumber \\
	&=q^2 \left(  -q^2 +  q +2   \right)/2  + (q^3-q^2) /2  + (q^4-q^2) /2  +q u^* \nonumber\\
	&=q^3 +q u^{* } . \label{eq:sum_right_1}
	\end{align}
	On the other hand, it follows from Equation \eqref{eq:Omega_sum_l} that
	\begin{align}
	\sum_{l=0}^{q-1} \sum_{b=0}^{q-1}\sum_{a=0}^{\infty} \# \Phi_{a,b,l} & =\sum_{l=0}^{q-1} \left(  \# \Omega_{r^* ,0,t ,u^{*}} +  (q^2+q)l \right) \nonumber  \\
	& = q  \# \Omega_{r^* ,0,t ,u^{*}} + (q^4-q^2)/2.  \label{eq:sum_left}
	\end{align}
	By Proposition \ref{prop:div_deg}, the genus is
	\[
	g^{(3)}= q^3-2q+1 =r^*-q+1.
	\]
	Using Equations \eqref{eq:sum_all}, \eqref{eq:sum_right_2}, \eqref{eq:sum_right_1} and \eqref{eq:sum_left}, we conclude that
	\begin{align*}
	\# \Omega_{r^{*},0,t , u^{*}}&  = q + (q-1)t  +u ^{*}\\
	& =  1-g^{(3)} + r^{*} + (q-1)t  +u ^{*}.
	\end{align*} 
	This finishes the proof.
\end{proof}
Now Proposition \ref{prop:Omega_num} follows from Lemmas  \ref{lem:Omega_reduction},  \ref{lem:Omega_large}  and  \ref{lem:Omega_main}.
\begin{proof}[Proof of Proposition \ref{prop:Omega_num}]
	Let $ r \geqslant R $. The cardinality of $ \Omega_{r ,s,t , u } $ is given as follows: 
	\begin{align*} 
	\# \Omega_{r ,s,t , u }&  = \# \Omega_{\widehat r,0 , \widehat t,\widehat u} \quad \quad \textup{(by Lemma \ref{lem:Omega_reduction})}\\
	& =  \# \Omega_{r^* ,0, \widehat t , u^* } + (\widehat r -r^* ) +(\widehat u - u^*) \quad \quad \textup{(by Lemma \ref{lem:Omega_large})}  \\
	& = 1- g^{(3)}+ r^* + (q-1) \widehat t + u^* + (\widehat r -r^* ) +(\widehat u - u^*) \quad \quad \textup{(by Lemma \ref{lem:Omega_main})} \\
	& = 1- g^{(3)}+ \widehat r + (q-1) \widehat t +  \widehat u    \\ 
	& = 1- g^{(3)}+ r  + (q-1)s + (q-1)   t + u.  \quad \quad \textup{(by Lemma \ref{lem:Omega_reduction})}  
	\end{align*}
	This finishes the proof of Proposition \ref{prop:Omega_num}.
\end{proof}

\section{Divisors on a line}\label{sec:divi}

Let $  \alpha_\mu $ be distinct elements in  $\mathbb{F}_{q^2}^{*} $ such that $  \alpha_\mu^{q-1}+1 =0 $ for $ 1 \leqslant \mu \leqslant q-1 $. 
Recall that the divisors $ S_0^{(3)} $ and $ S_1^{(3)} $ are decomposed as follows:
\[ S_0^{(3)} = \sum_{\mu = 1 }^{q-1} S_{0,\mu}^{(3)}, \quad S_1^{(3)} = \sum_{\mu = 1 }^{q-1} S_{1,\mu}^{(3)},
\]
where $ S_{0,\mu}^{(3)} $ and $ S_{1,\mu}^{(3)} $ are rational places of
$ z_3-\alpha_\mu $ and $ z_2-\alpha_\mu $, respectively. 
By Definition \ref{def:tow} that $ z_2=x_1 x_2 $ and $ z_3=x_2 x_3 $, we have 
\begin{align*} 
\Div_3(z_2) &=  (q+1)Q^{(3)} + (q+1)S_0^{(3)} -(q^2+q)P^{(3)},\\
\Div_3(z_3) &=  (q^2+q)Q^{(3)} - (q+1)S_1^{(3)} -(q+1)P^{(3)}, \\ 
\Div_2(z_2 - \alpha_ \mu ) &=  (q+1) S_{1,\mu}^{(2)} -(q+1)P^{(2)} ,  \\	
\Div_3(z_2- \alpha_\mu ) &=  (q^2+q) S_{1,\mu}^{(3)} -(q^2+q)P^{(3)},\\ \Div_3(z_3 - \alpha_\mu) &= (q^2+q ) S_{0,\mu}^{(3)} - (q+1)S_1^{(3)}-(q+1)P^{(3)}.
\end{align*}
In this section we denote $ G: =  r Q^{(3)} + \sum_{\mu = 1}^{q-1} s_\mu S_{0, \mu }^{(3)} + \sum_{\nu = 1}^{q-1} t_\nu S_{1, \nu }^{(3)}  + u P^{(3)}$ and define a lattice point set $ \Omega_{r,s_1, \cdots, s_{q-1} ,t_1,\cdots, t_{q-1}, u} $ as follows:	
\begin{align}\label{eq:Omega2_0}
\Big\{ \, &  (i, j_1,\cdots,j_{q-1} , k_1,\cdots,k_{q-1} )   \, \Big|\,    -r \leqslant i ,  \nonumber \\
&-s_\mu \leqslant  i + (q^2+q) k_\mu < -s_\mu+ (q^2+q) \textup{ for } \mu =1, \cdots, q-1,   \nonumber \\
&-t_\nu \leqslant  qi + (q^2+q) j_\nu - (q+1) \sum_{\mu = 1}^{q-1} k_\mu < -t_\nu + (q^2+q) \textup{ for } \nu =1, \cdots, q-1,  \nonumber \\
&-u \leqslant  -q^2 i - (q^2+q) \sum_{\nu = 1}^{q-1} j_\nu -(q+1) \sum_{\mu = 1}^{q-1}  k_\mu    \,\Big\},  
\end{align}
or equivalently it is written as
\begin{align}\label{eq:Omega2}
  \Big\{ \, &  (i, j_1,\cdots,j_{q-1} , k_1,\cdots,k_{q-1} )   \, \Big|\,   -r \leqslant i  , \nonumber \\
&  k_\mu = \ceil{ \dfrac{-i-s_{\mu}}{q^2+q}} \textup{ for } \mu =1, \cdots, q-1,  \nonumber \\
&j_\nu= \ceil{\dfrac{-qi-t_{\nu}}{q^2+q} +\dfrac{1}{q} \sum_{\mu=1 }^{q-1} k_\mu } \textup{ for } \nu =1, \cdots, q-1,  \nonumber \\
&-u \leqslant  -q^2 i - (q^2+q) \sum_{\nu = 1}^{q-1} j_\nu -(q+1) \sum_{\mu = 1}^{q-1}  k_\mu    \,\Big\}.  
\end{align}

The main results of this section is given below.
\begin{prop} \label{prop:Omega2}
	There exists a constant $ R           $ depending on $s_\mu $, $ t_\nu $ and $u $, such that for $ r \geqslant R $,
	\[ \#  \Omega_{r,s_1, \cdots, s_{q-1} ,t_1,\cdots, t_{q-1}, u} = 1-g^{(3)}+ r +  \sum_{\mu =1 }^{q-1} s_{\mu } +   \sum_{\nu =1 }^{q-1} t_ {\nu } +u .\]
\end{prop}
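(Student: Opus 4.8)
The plan is to mimic the strategy used for Proposition \ref{prop:Omega_num}, adapting each step from the single-divisor case $S_0^{(3)}=\sum S_{0,\mu}^{(3)}$, $S_1^{(3)}=\sum S_{1,\nu}^{(3)}$ to the multi-variable refinement. First I would record the divisor identities from the list above (the principal divisors of $z_2$, $z_3$, $z_2-\alpha_\nu$, $z_3-\alpha_\mu$, together with those of $x_1$, $w$, $v$ already computed in Section \ref{sec:arith}), so that the exponent vector $(i,j_1,\dots,j_{q-1},k_1,\dots,k_{q-1})$ of a monomial $x_1^{i}\prod_\nu (z_2-\alpha_\nu)^{j_\nu}\prod_\mu (z_3-\alpha_\mu)^{k_\mu}$ translates exactly into the four families of inequalities defining $\Omega_{r,s_1,\dots,s_{q-1},t_1,\dots,t_{q-1},u}$. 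The key structural observation is that the $k_\mu$ are pairwise decoupled — each $k_\mu$ is determined by $i$ and $s_\mu$ alone via $k_\mu=\ceil{(-i-s_\mu)/(q^2+q)}$ — and the $j_\nu$ depend only on $i$, $t_\nu$ and the single symmetric quantity $\sum_\mu k_\mu$. So once $i$ is fixed, the whole tuple is determined, and the only genuine constraint is the last inequality, i.e. an upper bound on $-q^2 i-(q^2+q)\sum_\nu j_\nu-(q+1)\sum_\mu k_\mu$ in terms of $u$.

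Next I would prove the analogue of Lemma \ref{lem:Omega_reduction}: a divisor-equivalence reduction showing that, by subtracting a suitable principal divisor of the form $x_1^{a}\prod_\nu(z_2-\alpha_\nu)^{b_\nu}\cdot(\text{correction in }w,v)$, one may assume each $s_\mu$ lies in a fixed residue window and each $t_\nu$ satisfies $0\le t_\nu<q+1$, with the claimed linear quantity $r+\sum s_\mu+\sum t_\nu+u$ invariant under the substitution. Then the analogue of Lemma \ref{lem:Omega_large} handles the regime where $r$ and $u$ are both large: the set-difference $\Omega_{r,\dots}\setminus\Omega_{r^*,\dots}$ collapses to an interval in the $i$-variable of length $r-r^*$ (the argument that the $A_4$-type condition becomes vacuous once $k_\mu\geqslant q-1$ goes through verbatim, now uniformly in $\mu$), and symmetrically for $u$; so it suffices to evaluate $\#\Omega_{r^*,0,\dots,0,t_1,\dots,t_{q-1},u^*}$ for bounded $t_\nu$. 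Finally, as in Lemma \ref{lem:Omega_main}, I would slice along $k_\mu=-qa_\mu+b_\mu$; because the $k_\mu$ are independent, the three-dimensional slicing in the single-variable proof is replaced by slicing over $(a_1,b_1),\dots,(a_{q-1},b_{q-1})$, and each slice reduces to counting lattice points in triangles of the type treated by Lemmas \ref{lem:Psi_a} and \ref{lem:Phi_b} — with the shifts now involving $\sum_\mu(q+1)b_\mu$ rather than a single $(q+1)b$. Summing the resulting arithmetic-progression contributions over all the $b_\mu$ gives the count.

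The main obstacle I anticipate is bookkeeping in the slicing step: in the single-variable case the term $\sum_\mu k_\mu$ appearing in the $j_\nu$-constraint is controlled by one pair $(a,b)$, whereas here it is $\sum_\mu(-qa_\mu+b_\mu)$, so the $j_\nu$-windows and the final $u$-bound couple all the $b_\mu$ together even though the $k_\mu$ themselves do not. Disentangling this — showing that after the change of variables $\widetilde j_\nu=j_\nu-qk-a_\nu$ the counting still factors into a product of one-dimensional sums, and that the leftover cross terms telescope exactly as in \eqref{eq:sum_all} — will require care, but no new idea beyond Pick's theorem and the summation identities $\sum_{b} b$ and $\sum_b\floor{(c+b)/q}$ already exploited in Lemmas \ref{lem:Psi_a} and \ref{lem:Phi_b}. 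Once $\#\Omega_{r^*,0,\dots,0,t_1,\dots,t_{q-1},u^*}=1-g^{(3)}+r^*+\sum_\nu(q-1)\cdot\!\text{(reduced $t_\nu$ contribution)}+u^*$ is established, chaining the three reductions back together yields the stated formula for general $r\geqslant R$, exactly as in the proof of Proposition \ref{prop:Omega_num}.
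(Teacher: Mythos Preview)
Your high-level outline (a reduction step, a large-$r$/large-$u$ step, then a hard count at the anchor $(r^*,u^*)$) matches the paper's architecture, but the decisive counting step is handled quite differently, and the difference is exactly the obstacle you flag but do not resolve.

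You propose to slice simultaneously over all $k_\mu=-qa_\mu+b_\mu$ and hope that after a change of variables the count ``factors into a product of one-dimensional sums.'' It does not: the final inequality $-u\leqslant -q^2\widetilde{i}-(q^2+q)\sum_\nu \widetilde{j}_\nu-(q+1)\sum_\mu\widetilde{k}_\mu$ couples all the $\widetilde{j}_\nu$ through their sum, so the resulting region is a genuine $(q{-}1)$-dimensional polytope, not a product of triangles. Pick's theorem and the sums $\sum_b b$, $\sum_b\lfloor(c+b)/q\rfloor$ alone will not disentangle it. (Note also that your divisor-equivalence reduction can only force $s_\mu\in[0,q^2{+}q)$, not $s_\mu=0$, because $\Div_3(z_3-\alpha_\mu)$ moves $s_\mu$ in steps of $q^2{+}q$; so the $k_\mu$ will in general remain distinct and the coupling persists.)

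The paper sidesteps this entirely with two ideas you are missing. First, a symmetry lemma (their Lemma \ref{lem:set_inv2}): $\#\Omega$ is invariant under permutations of $(r,s_1,\dots,s_{q-1})$ and of $(u,t_1,\dots,t_{q-1})$, and under swapping the two blocks. Second, armed with this symmetry, the paper reduces \emph{one coordinate at a time}: Lemma \ref{lem:s} slices only along $k_1$ (holding the other $k_\mu$ as dependent variables) to show $\#\Omega_{r^*,0,s_2,\dots}=\#\Omega_{r^*,0,0,s_3,\dots}+s_2$, then permutes and repeats; Lemma \ref{lem:t} does the same for the $t_\nu$. Once all $s_\mu=t_\nu=0$, every $k_\mu$ equals the common $k=\lceil -i/(q^2+q)\rceil$ and every $j_\nu$ is equal, so $\Omega_{r^*,0,\dots,0,0,\dots,0,u^*}$ collapses to the single-variable set $\Omega_{r^*,0,0,u^*}$ already counted in Lemma \ref{lem:Omega_main} --- no new triangle count is needed. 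The telescoping you hoped for is thus achieved not inside one big slice, but across $2(q-1)$ single-coordinate reductions.
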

We will prove Proposition \ref{prop:Omega2} after some preparations. Before we do so, we can determine a basis for the Riemann-Roch space $ \mathcal{L}(G) $ applying Proposition \ref{prop:Omega2}.
\begin{prop}\label{prop:basis2}
	The elements 
	$ 	x_1^{i} \prod_{\nu=1}^{q-1} {(z_2 -\alpha_\nu)^ {j_\nu} } \prod_{\mu=1}^{q-1} {(z_3 -\alpha_\mu)^ {k_\mu} } $ with $ (i, j_1,\cdots,j_{q-1} , k_1,\cdots,k_{q-1} ) \in \Omega_{r,s_1, \cdots, s_{q-1} ,t_1,\cdots, t_{q-1}, u } $ form a basis of $ \mathcal{L}(G) $.
\end{prop}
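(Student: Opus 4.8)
The plan is to mirror, almost verbatim, the argument used to establish Proposition~\ref{prop:basis}, replacing the single-variable monomials $x_1^i w^j v^k$ by the products $f_{i,\mathbf{j},\mathbf{k}} := x_1^{i} \prod_{\nu=1}^{q-1} (z_2 -\alpha_\nu)^{j_\nu} \prod_{\mu=1}^{q-1} (z_3 -\alpha_\mu)^{k_\mu}$. First I would compute the principal divisor of $f_{i,\mathbf{j},\mathbf{k}}$ using the divisors $\Div_3(x_1)$, $\Div_3(z_2-\alpha_\nu)$ and $\Div_3(z_3-\alpha_\mu)$ listed at the start of Section~\ref{sec:divi}. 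The key bookkeeping point is that $\Div_3(z_3-\alpha_\mu)$ contributes $(q^2+q)S_{0,\mu}^{(3)}$ to its own component but contributes $-(q+1)S_1^{(3)} = -(q+1)\sum_\nu S_{1,\nu}^{(3)}$ uniformly across all the $S_{1,\nu}^{(3)}$ components, and $\Div_3(z_2-\alpha_\nu)$ contributes $(q^2+q)S_{1,\nu}^{(3)}$ only to its own $S_{1,\nu}^{(3)}$ component; tracking these one obtains
\begin{align*}
v_{Q^{(3)}}(f_{i,\mathbf{j},\mathbf{k}}) &= i, \\
v_{S_{0,\mu}^{(3)}}(f_{i,\mathbf{j},\mathbf{k}}) &= i + (q^2+q)k_\mu, \\
v_{S_{1,\nu}^{(3)}}(f_{i,\mathbf{j},\mathbf{k}}) &= qi + (q^2+q)j_\nu - (q+1)\textstyle\sum_{\mu=1}^{q-1} k_\mu, \\
v_{P^{(3)}}(f_{i,\mathbf{j},\mathbf{k}}) &= -q^2 i - (q^2+q)\textstyle\sum_{\nu=1}^{q-1} j_\nu - (q+1)\textstyle\sum_{\mu=1}^{q-1} k_\mu,
\end{align*}
and $f_{i,\mathbf{j},\mathbf{k}}$ has no other poles or zeros. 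Comparing these valuations with the defining inequalities of $\Omega_{r,s_1,\cdots,s_{q-1},t_1,\cdots,t_{q-1},u}$ in \eqref{eq:Omega2_0} shows immediately that $(i,\mathbf{j},\mathbf{k})\in\Omega_{r,s_1,\cdots,s_{q-1},t_1,\cdots,t_{q-1},u}$ implies $f_{i,\mathbf{j},\mathbf{k}}\in\mathcal{L}(G)$.

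Next I would establish linear independence. Since $v_{Q^{(3)}}(f_{i,\mathbf{j},\mathbf{k}}) = i$ and distinct lattice points in $\Omega_{r,s_1,\cdots,s_{q-1},t_1,\cdots,t_{q-1},u}$ carry distinct values of $i$ (because once $i$ is fixed, the equivalent description \eqref{eq:Omega2} forces each $k_\mu$ and each $j_\nu$ via ceilings, so the point is uniquely determined by $i$), any nontrivial $K$-linear combination has a well-defined pole order at $Q^{(3)}$ equal to $-\min\{i : a_{i,\mathbf{j},\mathbf{k}}\neq 0\}$ and hence cannot vanish. Thus the $f_{i,\mathbf{j},\mathbf{k}}$ are linearly independent, and it remains only to show $\ell(G) = \#\Omega_{r,s_1,\cdots,s_{q-1},t_1,\cdots,t_{q-1},u}$. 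For $r$ large this follows from the Riemann--Roch theorem together with Proposition~\ref{prop:Omega2}: one has $\deg(G) = r + \sum_\mu s_\mu + \sum_\nu t_\nu + u$ (using that each $S_{0,\mu}^{(3)}$ and $S_{1,\nu}^{(3)}$ is a rational place of degree one), so $\ell(G) = 1 - g^{(3)} + \deg(G) = \#\Omega_{r,s_1,\cdots,s_{q-1},t_1,\cdots,t_{q-1},u}$, forcing the $f_{i,\mathbf{j},\mathbf{k}}$ to span $\mathcal{L}(G)$.

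Finally, to remove the hypothesis that $r$ is large, I would run the same descent argument as in the proof of Proposition~\ref{prop:basis}: pick $r' > r$ large enough that the result holds for $G' := r' Q^{(3)} + \sum_\mu s_\mu S_{0,\mu}^{(3)} + \sum_\nu t_\nu S_{1,\nu}^{(3)} + u P^{(3)}$, write $\mathcal{L}(G) = \{f\in\mathcal{L}(G') : v_{Q^{(3)}}(f)\geqslant -r\}$, expand any $f\in\mathcal{L}(G)$ in the spanning set $\{f_{i,\mathbf{j},\mathbf{k}} : (i,\mathbf{j},\mathbf{k})\in\Omega_{r',s_1,\cdots,s_{q-1},t_1,\cdots,t_{q-1},u}\}$, and observe that the coefficient of $f_{i,\mathbf{j},\mathbf{k}}$ must vanish whenever $i < -r$ because $v_{Q^{(3)}}$ of the sum is the minimum of the $i$ with nonzero coefficient; since $\Omega_{r,\cdots}$ is exactly the subset of $\Omega_{r',\cdots}$ with $i\geqslant -r$, this exhibits $f$ in the span of the claimed basis. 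The only genuinely new work compared with Proposition~\ref{prop:basis} is the divisor computation in the first paragraph — in particular correctly propagating the cross-term contributions of $\Div_3(z_3-\alpha_\mu)$ to all the $S_{1,\nu}^{(3)}$ and $P^{(3)}$ slots — and, of course, it rests on Proposition~\ref{prop:Omega2}, whose proof is deferred; that counting proposition is where the real difficulty lies, not in the present deduction.
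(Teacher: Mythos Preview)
Your proposal is correct and follows precisely the approach the paper takes: the paper computes the same divisor (arriving at the same four valuation formulas you wrote) and then simply says ``Similar to the proof of Proposition~\ref{prop:basis}, we achieve the proof by using Proposition~\ref{prop:Omega2},'' which is exactly the linear-independence-via-$v_{Q^{(3)}}$, Riemann--Roch-for-large-$r$, and descent-to-general-$r$ argument you have spelled out. You have in fact written out more detail than the paper does, and correctly identified that the substantive work is deferred to Proposition~\ref{prop:Omega2}.
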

\begin{proof}
	It is easy to compute the divisors of these elements in $ F^{(3)} $:
	\begin{align*}
	& \Div_3 \left(	 x_1^{i} \prod_{\nu=1}^{q-1} {(z_2 -\alpha_\nu)^ {j_\nu} } \prod_{\mu=1}^{q-1} {(z_3 -\alpha_\mu)^ {k_\mu} }  \right)\\
	&  =  i Q^{(3)} + \sum_{\mu=1}^{q-1}\left( i+(q^2+q) k_\mu  \right) S_{0,\mu}^{(3)} + \sum_{\nu=1}^{q-1} \left( qi + (q^2+q)  j_\nu -(q+1)\sum_{\mu=1}^{q-1} k_\mu  \right) S_{1,\nu}^{(3)} \\
	& \quad + \left( -q^2 i -(q^2+q) \sum_{\nu=1}^{q-1} j_\nu -(q+1) \sum_{\mu=1}^{q-1} k_\mu \right) P^{(3)} .
	\end{align*}
	Similar  to the proof of Proposition \ref{prop:basis},  we achieve the proof by using Proposition \ref{prop:Omega2}.
\end{proof}

Now we give some lemmas describing the lattice point set $ \Omega_{r ,s_1,\cdots,s_{q-1}, t_1, \cdots, t_{q-1}, u } $.
\begin{lem}  \label{lem:set_inv2}
	Denote $ s_0:=r  $ and $ t_0:=u $. The lattice point set $ \Omega_{s_0,s_1, \cdots, s_{q-1} ,t_1,\cdots, t_{q-1},t_0} $ satisfies the following identities:	
	\[\# \Omega_{s_0,s_1,\cdots, s_{q-1}, t_1,\cdots, t_{q-1}, t_0} = \# \Omega_{s_0', s_1',\cdots, s_{q-1}', t_1',\cdots, t_{q-1}', t_0'} , \]
	and \[\# \Omega_{s_0,s_1,\cdots, s_{q-1}, t_1,\cdots, t_{q-1}, t_0} = \# \Omega_{  t_0',t_1',\cdots, t_{q-1}', s_1',\cdots, s_{q-1}',s_0'} , \]
	where the sequences $ (s_i')_{i=0}^{q-1}  $ and $ (t_i')_{i=0}^{q-1}  $ are permutations of $ (s_i)_{i=0}^{q-1}  $ and $ (t_i)_{i=0}^{q-1}  $, respectively.
\end{lem}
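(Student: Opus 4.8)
The plan is to deduce every assertion by exhibiting a cardinality-preserving transformation of the ambient lattice $\mathbb{Z}^{2q-1}$, with coordinates $i,j_1,\dots,j_{q-1},k_1,\dots,k_{q-1}$, that carries one set $\Omega$ onto another, exactly in the spirit of the proofs of Lemmas \ref{lem:Omega_reduction} and \ref{lem:set_inv}. Since the symmetric group acting on $(s_0,\dots,s_{q-1})$ is generated by the permutations fixing $s_0$ together with the single transposition $s_0\leftrightarrow s_1$ (and similarly for the $t$-block), and since the second displayed identity is obtained from the first by composing with the ``transpose'' interchanging the whole $s$-block with the whole $t$-block, it suffices to establish: (i) invariance under an arbitrary permutation of $(s_1,\dots,s_{q-1})$ or of $(t_1,\dots,t_{q-1})$; (ii) the transpose identity $\#\Omega_{s_0,\dots,s_{q-1},t_1,\dots,t_{q-1},t_0}=\#\Omega_{t_0,t_1,\dots,t_{q-1},s_1,\dots,s_{q-1},s_0}$; (iii) the transpositions $s_0\leftrightarrow s_1$ and $t_0\leftrightarrow t_1$.

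Part (i) is immediate: simultaneously permuting the coordinates $k_1,\dots,k_{q-1}$ (respectively $j_1,\dots,j_{q-1}$) is unimodular, it permutes the pinning windows of the $k_\mu$ (respectively $j_\nu$) accordingly, and it leaves the remaining defining inequalities untouched because those involve the $k_\mu$ and $j_\nu$ only through the symmetric sums $\sum_\mu k_\mu$ and $\sum_\nu j_\nu$.

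For Part (ii) I would imitate the substitution used for Lemma \ref{lem:set_inv}. Starting from the divisor of $x_1^{i}\prod_\nu(z_2-\alpha_\nu)^{j_\nu}\prod_\mu(z_3-\alpha_\mu)^{k_\mu}$ computed in the proof of Proposition \ref{prop:basis2}, I take the negated $P^{(3)}$-coefficient $I:=-q^2i-(q^2+q)\sum_\nu j_\nu-(q+1)\sum_\mu k_\mu$ as the new ``$i$'', the $S_{1,\mu}^{(3)}$-coefficients as the new ``$k_\mu$'', and suitably corrected $S_{0,\nu}^{(3)}$-coefficients as the new ``$j_\nu$''; a direct check (using the divisor identities for $x_1$, $z_2-\alpha_\nu$ and $z_3-\alpha_\mu$) shows that the resulting map $\mathbb{Z}^{2q-1}\to\mathbb{Z}^{2q-1}$ has determinant $\pm1$, that it sends the one-sided inequality $-s_0\leqslant i$ to $-s_0\leqslant I$, the one-sided inequality bounding the $P^{(3)}$-coefficient to the one bounding the new $Q^{(3)}$-slot for the value $s_0=r$, and the pinning windows of the $k_\mu$, $j_\nu$ to those of the $t_\nu$, $s_\mu$; hence the image of $\Omega_{s_0,\dots,t_0}$ is exactly $\Omega_{t_0,t_1,\dots,t_{q-1},s_1,\dots,s_{q-1},s_0}$, up to a relabelling absorbed by Part (i).

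Part (iii) is the crux, since --- unlike in Lemma \ref{lem:set_inv} --- there is no unimodular substitution that directly exchanges the one-sided slot $s_0$ with a windowed slot $s_\mu$. My plan is first to rewrite the one-sided condition $-r\leqslant i$ as the conjunction of the windowed condition $-r\leqslant i+(q^2+q)k_0<-r+(q^2+q)$ on a fresh auxiliary coordinate $k_0$ --- which pins $k_0$ --- together with the sign constraint $k_0\leqslant 0$; that these two are equivalent to $i\geqslant -r$ is an elementary check of the type already performed in the proof of Lemma \ref{lem:Omega_large}. After this rewriting all $q$ slots $s_0,\dots,s_{q-1}$ stand on an equal footing, the only asymmetry being the extra datum $k_0\leqslant 0$ and the fact that the sums in the $t$-conditions and in the last inequality run over $k_1,\dots,k_{q-1}$ only. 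Next, exactly as in the proof of Proposition \ref{prop:basis}, I eliminate the $j_\nu$ and the $k_\mu$ $(\mu\geqslant 1)$ in favour of $i$, reducing $\#\Omega$ to the number of integers $i$ with $i\geqslant -s_0$ and
\[
\sum_{m=0}^{q-1}K^{(s_m)}(i)+\sum_{\nu=1}^{q-1}T_\nu(i)\ \leqslant\ u ,
\]
where $K^{(s)}(i)$ denotes the unique representative of $i$ modulo $q^2+q$ in $[-s,\,-s+q^2+q)$ and $T_\nu(i)$ is a similarly normalized quantity depending on $i$ and on the $K^{(s_\mu)}(i)$; one then checks that this one-variable counting problem is unchanged under permuting $s_0,\dots,s_{q-1}$, which gives $s_0\leftrightarrow s_1$, and $t_0\leftrightarrow t_1$ then follows by conjugating with the transpose of Part (ii). The main difficulty of the whole lemma is precisely this last invariance check: one must track how the floor-function corrections hidden in $K^{(s_m)}(i)$ and $T_\nu(i)$ interact as the $s_m$ are permuted, and it is here that the coupling between the lower bound $i\geqslant -s_0$ and the upper bound given by $u$ is genuinely used.
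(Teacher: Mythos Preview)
Your Parts (i) and (ii) are sound and agree with the paper's treatment of the second displayed identity. The gap is in Part (iii). After you introduce the auxiliary $k_0$, you correctly note that two asymmetries remain: the sign constraint $k_0\leqslant 0$, and the fact that the sums in the $t$-windows and in the $u$-inequality run over $k_1,\dots,k_{q-1}$ only. But your subsequent one-variable reduction is written inconsistently with this: the displayed bound $\sum_{m=0}^{q-1}K^{(s_m)}(i)+\cdots\leqslant u$ has no right to include the term $m=0$, since $k_0$ never appears in the $u$-inequality. With the correct reduction the parameters $s_0$ and $s_1,\dots,s_{q-1}$ enter in genuinely different ways (one through the range $i\geqslant -s_0$, the others through the inequality), and you have not shown why the count is nevertheless invariant under $s_0\leftrightarrow s_1$. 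Your closing sentence effectively concedes that this ``last invariance check'' is the whole content of the assertion and leaves it undone.

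The paper avoids this difficulty by never reducing to one variable; instead it performs two further substitutions that make the description \emph{manifestly} symmetric. First, for the $t$-block, it writes $-t_0=qi+(q^2+q)j_0-(q+1)\sum_{\mu=1}^{q-1}k_\mu-\eta$ with $0\leqslant\eta<q^2+q$; this pins a new coordinate $j_0$ obeying the same window as $j_1,\dots,j_{q-1}$, and the one-sided $t_0$-condition collapses to the symmetric residual $i+\sum_{\nu=0}^{q-1}j_\nu\leqslant 0$. Second, for the $s$-block, it writes $i=i'+(q^2+q)l'$ with $0\leqslant i'<q^2+q$, substitutes $k_\mu'=k_\mu+l'$, $j_\nu'=j_\nu+(q+1)l'$, and observes that $i\geqslant -s_0$ becomes $l'\geqslant k_0':=\lceil(-i'-s_0)/(q^2+q)\rceil$; setting $\iota'=l'-k_0'\geqslant 0$, the asymmetric combination $\sum_{\mu=1}^{q-1}k_\mu'+l'$ appearing in every $t$-window rewrites as $\sum_{\mu=0}^{q-1}k_\mu'+\iota'$, which is symmetric in all $s_\mu$. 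This absorption of $l'$ into a fully symmetric sum is the idea your sketch is missing.
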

\begin{proof}
	Recall that the lattice point set $ \Omega_{s_0,s_1, \cdots, s_{q-1} ,t_1,\cdots, t_{q-1}, t_0} $  is defined by
	\begin{align}
	\Big\{ \, &  (i, j_1,\cdots,j_{q-1} , k_1,\cdots,k_{q-1})   \, \Big|\,    -s_0 \leqslant i ,  \nonumber \\
	&  k_\mu = \ceil{ \dfrac{-i-s_{\mu}}{q^2+q}} \textup{ for } \mu =1, \cdots, q-1,  \nonumber \\
	&-t_\nu \leqslant  qi + (q^2+q) j_\nu - (q+1) \sum_{\mu = 1}^{q-1} k_\mu < -t_\nu + (q^2+q) \textup{ for } \nu =1, \cdots, q-1,  \nonumber \\
	&-t_0 \leqslant  -q^2 i - (q^2+q) \sum_{\nu = 1}^{q-1} j_\nu -(q+1) \sum_{\mu = 1}^{q-1}  k_\mu   \label{eq:Omi_t}\tag{$H_4$}   \,\Big\}.  
	\end{align}
	Write $ -t_0= qi+(q^2+q)j_0-(q+1) \sum_{\mu=1}^{q-1} k_{\mu}-\eta $ for integers $ j_0 $ and $ \eta $, where $ 0 \leqslant \eta < q^2+q $. Then the last condition \eqref{eq:Omi_t} gives that
	\[i+\sum_{\nu = 0}^{q-1} j_\nu \leqslant \dfrac{\eta}{q^2+q}<1,\]
	and consequently
	\[i+\sum_{\nu = 0}^{q-1} j_\nu \leqslant 0 .\]
	Namely the set
	$ 	\Omega_{s_0,s_1, \cdots, s_{q-1} ,t_1,\cdots, t_{q-1}, t_0}  $ becomes
	\begin{align}\label{eq:Ome_t2}
	\Big\{ \, &  (i, j_0, j_1,\cdots,j_{q-1} , k_1,\cdots,k_{q-1} )   \, \Big|\,    -s_0 \leqslant i ,  \nonumber \\
	&  k_\mu = \ceil{ \dfrac{-i-s_{\mu}}{q^2+q}} \textup{ for } \mu =1, \cdots, q-1,   \nonumber  \\
	&-t_\nu \leqslant  qi + (q^2+q) j_\nu - (q+1) \sum_{\mu = 1}^{q-1} k_\mu < -t_\nu + (q^2+q) \textup{ for } \nu =0, \cdots, q-1,   \nonumber  \\
	&i+\sum_{\nu = 0}^{q-1} j_\nu\leqslant 0   \,\Big\}.  
	\end{align}
	So the number of the lattice points does not depend on the order of $t_{\nu}$ with $ 0 \leqslant \nu \leqslant q-1 $.
	
	Further, we
	write $ i=i'+(q^2+q)l' $ with $ 0 \leqslant i' <q^2+q $. Let $  k_{\mu} =  k_{\mu}'-l' $ and $ j_\nu=j_\nu'-(q+1)l'  $ for $  \mu \geqslant 1$ and  $\nu \geqslant 0 $. The set 	$ 	\Omega_{s_0,s_1, \cdots, s_{q-1} ,t_1,\cdots, t_{q-1}, t_0}  $ in \eqref{eq:Ome_t2} becomes	
	\begin{align*}
	\Big\{& (i',l',j_0',j_1',\cdots,j_{q-1}',k_1',\cdots,k_{q-1}')
	~\Big|~ -s_0 \leqslant i'+(q^2+q)l', \,\,0 \leqslant i' <q^2+q ,\\
	&	~ k_{\mu}' = \left \lceil \frac{-i'-s_{\mu}}{q^2+q} \right \rceil  ~~\text{for} ~~ \mu =1,\cdots, q-1,\nonumber \\
	&-t_\nu \leqslant  qi' + (q^2+q) j_\nu' - (q+1) \sum_{\mu = 1}^{q-1} k_\mu'-(q+1)l' < -t_\nu + (q^2+q) \textup{ for } \nu =0, \cdots, q-1,   \\
	& i'+\sum_{\nu = 0}^{q-1} j_\nu' \leqslant 0
	\Big\}.
	\end{align*}
	The first inequality gives that  $ l' \geqslant k_0': =  \ceil{ \dfrac{-i'-s_0}{q^2+q} }  $. By taking $ l'=  k_0' +\iota' $ with $ \iota' \geqslant 0 $, we can rewrite $ 	\Omega_{s_0,s_1, \cdots, s_{q-1} ,t_1,\cdots, t_{q-1}, t_0}   $ as
	\begin{align*}
	\Big\{& (i',\iota',j_0',j_1',\cdots,j_{q-1}',k_0',k_1',\cdots,k_{q-1}')
	~\Big| \,\,0 \leqslant i' <q^2+q , \,\, \iota' \geqslant 0, \\
	&	~ k_{\mu}' = \left \lceil \frac{-i'-s_{\mu}}{q^2+q} \right \rceil  ~~\text{for} ~~ \mu =0,1,\cdots, q-1,\nonumber \\
	&-t_\nu \leqslant  qi' + (q^2+q) j_\nu' - (q+1) \sum_{\mu = 0}^{q-1} k_\mu'-(q+1)\iota'  < -t_\nu + (q^2+q) \textup{ for } \nu =0, \cdots, q-1,   \\
	&i'+\sum_{\nu = 0}^{q-1} j_\nu' \leqslant 0
	\Big\}.
	\end{align*}
	This means that the number of the lattice points also does not depend on the order of $s_{\mu}$ with $ 0 \leqslant \mu \leqslant q-1 $, which concludes the first assertion. 
	
	For the second assertion, the proof is similar to that of Lemma \ref{lem:set_inv}. The details are showed below. By taking 
	\begin{align*}
	I &:= -q^2 i - (q^2+q) \sum_{\nu=1}^{q-1} j_{\nu}-(q+1)\sum_{\mu=1}^{q-1} k_{\mu},\\
	K_{\mu}&:= i + \sum_{\nu=1}^{q-1} j_{\nu} + j_{\mu} \textup{ for } \mu=1,\cdots,q-1,\\
	J_{\nu} &:= qi+ (q+1) \sum_{a=1}^{q-1} j_{a} + \sum_{\mu=1}^{q-1} k_{\mu}+k_{\nu}  \textup{ for } \nu=1,\cdots,q-1,
	\end{align*}
	the set $ \Omega_{s_0,s_1, \cdots, s_{q-1} ,t_1,\cdots, t_{q-1} ,t_0} $ in
	\eqref{eq:Omega2_0} becomes
\begin{align*} 
\Big\{ \, &  (I, J_1,\cdots,J_{q-1} , K_1,\cdots,K_{q-1} )   \, \Big|\,    -t_0 \leqslant I ,  \nonumber \\
&-t_\mu \leqslant  I + (q^2+q) K_\mu < -t_\mu+ (q^2+q) \textup{ for } \mu =1, \cdots, q-1,   \nonumber \\
&-s_\nu \leqslant  qI + (q^2+q) J_\nu - (q+1) \sum_{\mu = 1}^{q-1} K_\mu < -s_\nu + (q^2+q) \textup{ for } \nu =1, \cdots, q-1,  \nonumber \\
&-s_0 \leqslant  -q^2 I - (q^2+q) \sum_{\nu = 1}^{q-1} J_\nu -(q+1) \sum_{\mu = 1}^{q-1}  K_\mu    \,\Big\}.  
\end{align*}
We find that this is just the set $ \Omega_{t_0,t_1, \cdots, t_{q-1} ,s_1,\cdots, s_{q-1} ,s_0} $. So the second assertion follows directly from the first assertion. This finishes the whole proof.
\end{proof}

\begin{lem}  \label{lem:Omes1}
	Suppose that $ 0 \leqslant s_\mu, t_\nu < q^2+q $. Let $ s_1=\min_{1\leqslant \mu \leqslant q-1}\{s_{\mu}\} $, $ \widehat r = r-(q^2+q+1)s_1$, $ \widehat s_{\mu} =  s_{\mu}-s_1$, $  t_{\nu} + s_1 =\widehat t_{\nu} + (q^2+q)l_{\nu}$ and  $ \widehat u = u+ (q^2+q+1)s_1+\sum_{\nu=1}^{q-1} l_{\nu} $, then we have
	\begin{align*} 
	\# \Omega_{r ,s_1,s_2,\cdots,s_{q-1}, t_1, \cdots, t_{q-1}, u }
	& = \# \Omega_{\widehat r ,0, \widehat s_2,\cdots,\widehat s_{q-1}, \widehat t_1, \cdots, \widehat t_{q-1}, \widehat u } , \\
	r+ \sum_{\mu=1}^{q-1} s_\mu +\sum_{\nu=1}^{q-1} t_\nu + u &=
	\widehat r+ \sum_{\mu=2}^{q-1} \widehat s_\mu +\sum_{\nu=1}^{q-1} \widehat t_\nu +\widehat u.
	\end{align*}
\end{lem}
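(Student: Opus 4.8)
The statement of Lemma~\ref{lem:Omes1} is, in spirit, a direct analogue of Lemma~\ref{lem:Omega_reduction} adapted to the multi-place setting, so my plan is to mimic that proof: exhibit an explicit function $f\in F^{(3)}$ whose divisor, added to $G$, produces the reduced divisor with parameters $(\widehat r,0,\widehat s_2,\dots,\widehat s_{q-1},\widehat t_1,\dots,\widehat t_{q-1},\widehat u)$, and then translate the corresponding bijection on lattice points. The natural candidate is a monomial in $x_1$, $z_2-\alpha_\nu$ and $z_3-\alpha_\mu$, or equivalently in $x_1$, $w$ and $v$; concretely, since subtracting $s_1$ from every $S_{0,\mu}^{(3)}$-coefficient simultaneously is exactly what multiplication by an appropriate power of $x_1$ does (recall $\Div_3(x_1)$ has coefficient $1$ on each $S_{0,\mu}^{(3)}$ after the decomposition, together with the $\prod(z_3-\alpha_\mu)^{k_\mu}$ bookkeeping), I would take $f$ to be $x_1^{-s_1}$ times a correction term built from $\Div_3(x_1^{q^2+q}w^{-q}v^{-1})$ and $\Div_3(x_1^{q^3+q^2}w^{1-q^2}v^{-q})$ exactly as in Proposition~\ref{prop:div_equi}. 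The effect on the $P^{(3)}$-coefficient is what forces the appearance of $\sum_{\nu}l_\nu$ in $\widehat u$, because reducing each $t_\nu+s_1$ modulo $q^2+q$ discards a multiple $l_\nu(q^2+q)$ of $S_{1,\nu}^{(3)}$, and $\Div_3(z_2-\alpha_\nu)=(q^2+q)S_{1,\nu}^{(3)}-(q^2+q)P^{(3)}$ converts each such discarded block into $-(q^2+q)l_\nu P^{(3)}$, i.e. a shift of $l_\nu$ after dividing by $q^2+q$.

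\textbf{Steps.} First I would record the principal divisors already computed in the excerpt and assemble the element
\[
f \;=\; x_1^{-s_1}\cdot\prod_{\nu=1}^{q-1}(z_2-\alpha_\nu)^{-l_\nu}\cdot(\text{power of }x_1\text{ absorbing the resulting }P^{(3)}\text{ and }Q^{(3)}\text{ shifts}),
\]
checking that $G+\Div_3(f)$ equals $\widehat r\,Q^{(3)}+\sum_{\mu=2}^{q-1}\widehat s_\mu S_{0,\mu}^{(3)}+\sum_{\nu=1}^{q-1}\widehat t_\nu S_{1,\nu}^{(3)}+\widehat u\,P^{(3)}$, with $\widehat u$ forced to be $u+(q^2+q+1)s_1+\sum_\nu l_\nu$ by a coefficient bookkeeping on $P^{(3)}$. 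Second, I would perform the induced linear change of variables on $(i,j_1,\dots,j_{q-1},k_1,\dots,k_{q-1})$ — shifting $i$ by $s_1$, shifting each $k_\mu$ by $\lceil\cdot\rceil$-type amounts, and shifting each $j_\nu$ by $l_\nu$ — and verify term by term that the defining inequalities of $\Omega_{r,s_1,\dots}$ in~\eqref{eq:Omega2} go over exactly to those of $\Omega_{\widehat r,0,\widehat s_2,\dots,\widehat t_{q-1},\widehat u}$; this is a routine but slightly lengthy substitution, the one subtlety being that the $S_{0,1}^{(3)}$-condition $-s_1\le i+(q^2+q)k_1<-s_1+(q^2+q)$ collapses to $0\le i+(q^2+q)k_1<q^2+q$, i.e. determines $k_1$ rather than contributing a coordinate. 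Since the map is a bijection between finite lattice-point sets, $\#\Omega_{r,s_1,\dots}=\#\Omega_{\widehat r,0,\widehat s_2,\dots}$. Finally, the degree identity $r+\sum s_\mu+\sum t_\nu+u=\widehat r+\sum_{\mu\ge 2}\widehat s_\mu+\sum\widehat t_\nu+\widehat u$ I would obtain either by a direct algebraic check from the substitution formulas for $\widehat r,\widehat s_\mu,\widehat t_\nu,\widehat u$ (substituting $\widehat r=r-(q^2+q+1)s_1$, $\widehat s_\mu=s_\mu-s_1$, $\widehat t_\nu=t_\nu+s_1-(q^2+q)l_\nu$, $\widehat u=u+(q^2+q+1)s_1+\sum l_\nu$, the $(q^2+q+1)s_1$ terms cancelling against the $(q-1)$ copies of $-s_1$ in $\sum\widehat s_\mu$ plus the $(q-1)$ copies of $+s_1$ in $\sum\widehat t_\nu$ plus the missing $S_{0,1}^{(3)}$-term, and the $(q^2+q)l_\nu$ terms cancelling the $l_\nu$ terms) or, more cleanly, from the fact that linearly equivalent divisors have equal degree, so $\deg G=\deg(G+\Div_3 f)$ gives the identity immediately once $\deg S_{0,\mu}^{(3)}=\deg S_{1,\nu}^{(3)}=1$ is used.

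\textbf{Main obstacle.} The genuinely delicate point is pinning down the exponents in $f$ so that \emph{every} coefficient matches — in particular the $P^{(3)}$-coefficient, which accumulates contributions from the $x_1$-power, from the auxiliary divisors $x_1^{q^2+q}w^{-q}v^{-1}$ and $x_1^{q^3+q^2}w^{1-q^2}v^{-q}$, and from the $(z_2-\alpha_\nu)^{-l_\nu}$ factors, and must come out to exactly $-\widehat u$; getting the $(q^2+q+1)s_1$ and $\sum l_\nu$ bookkeeping right here is where errors would hide. A secondary subtlety is that the hypothesis $0\le s_\mu,t_\nu<q^2+q$ is needed to guarantee $l_\nu\ge 0$ (so that we are discarding, not adding, $S_{1,\nu}^{(3)}$-mass) and to keep the reduced parameters in range; I would flag explicitly that $\widehat s_\mu\ge 0$ since $s_1$ is the minimum, and that $0\le\widehat t_\nu<q^2+q$ by construction, so that the reduced set is again of the form covered by the definition. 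Once the divisor identity is established, everything else is formal, exactly paralleling Lemma~\ref{lem:Omega_reduction}.
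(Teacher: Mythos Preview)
Your approach is correct and lands on essentially the same proof as the paper, but you take a longer route to get there. The paper's proof is entirely combinatorial: it simply writes down the affine substitution
\[
I = i + (q^2+q+1)s_1,\qquad K_\mu = k_\mu - s_1,\qquad J_\nu = j_\nu - (q+1)s_1 - l_\nu,
\]
and checks line by line that the inequalities in \eqref{eq:Omega2_0} defining $\Omega_{r,s_1,\dots,t_{q-1},u}$ transform into those of $\Omega_{\widehat r,0,\widehat s_2,\dots,\widehat t_{q-1},\widehat u}$; the degree identity then follows from the same substitution. No function $f$ or divisor interpretation is invoked. Your divisor-equivalence layer is a legitimate conceptual explanation of \emph{why} such an affine bijection must exist (multiplication by $f$ on basis elements is exactly a translation of exponents), and your plan to ultimately verify the lattice-point substitution is the same verification the paper does. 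So the core step coincides; you simply motivate it first.

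Two small corrections to your plan. First, the $K_1$-coordinate does not ``collapse'' or disappear: the target set $\Omega_{\widehat r,0,\widehat s_2,\dots}$ still carries the condition $0\le I+(q^2+q)K_1<q^2+q$ (with parameter $\widehat s_1=0$), so $K_1$ remains a coordinate of the lattice set, just as it was before --- in both sets every $k_\mu$ is already uniquely determined by $i$, so nothing special happens at $\mu=1$. Second, your proposed $f=x_1^{-s_1}\cdot\prod_\nu(z_2-\alpha_\nu)^{-l_\nu}\cdot(\text{corrections in }w,v)$ is workable but more awkward than necessary; the cleanest choice, dictated directly by the paper's substitution, is
\[
f \;=\; x_1^{(q^2+q+1)s_1}\;\prod_{\mu=1}^{q-1}(z_3-\alpha_\mu)^{-s_1}\;\prod_{\nu=1}^{q-1}(z_2-\alpha_\nu)^{-(q+1)s_1-l_\nu},
\]
since multiplying a basis element by this $f$ is precisely the shift $(i,j_\nu,k_\mu)\mapsto(I,J_\nu,K_\mu)$.
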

\begin{proof}
	Note that the set $ \Omega_{r,s_1, \cdots, s_{q-1} ,t_1,\cdots, t_{q-1}, u} $ is defined by \eqref{eq:Omega2_0}. 
	By setting $ I := i+(q^2+q+1)s_1 $, $ K_{\mu} := k_{\mu} -s_1 $ and $ J_{\nu} := j_{\nu}-(q+1)s_1 -l_{\nu} $, we find that $  \Omega_{r,s_1, \cdots, s_{q-1} ,t_1,\cdots, t_{q-1}, u} $ becomes
	\begin{align*}
	\Big\{ \, &  (I, J_1,\cdots,J_{q-1} , K_1,\cdots,K_{q-1} )   \, \Big|\,    
	-\widehat r \leqslant I ,  \\
	& \phantom{- s} 0 \leqslant  I + (q^2+q) K_1 <  (q^2+q),\\
	& -\widehat s_\mu \leqslant I + (q^2+q) K_\mu < -\widehat s_\mu+ (q^2+q)  \textup{ for } \mu =2, \cdots, q-1,   \\
	&-\widehat t_\nu \leqslant  qI + (q^2+q) J_\nu - (q+1) \sum_{\mu = 1}^{q-1} K_\mu < -\widehat t_\nu + (q^2+q) \textup{ for } \nu =1, \cdots, q-1,   \\
	&-\widehat  u \leqslant  -q^2 I - (q^2+q) \sum_{\nu = 1}^{q-1} J_\nu -(q+1) \sum_{\mu = 1}^{q-1}  K_\mu    \,\Big\},  
	\end{align*}
	which is exactly the set  $ \Omega_{\widehat r ,0, \widehat s_2,\cdots,\widehat s_{q-1}, \widehat t_1, \cdots, \widehat t_{q-1}, \widehat u } $, giving the desired conclusion. This finishes the proof.	 
\end{proof}

\begin{lem} \label{lem:s}
	Let $r^*: = q^3 - q $, $u ^{*}:=  q^3+ q^2 $. Assume that  $ 0 \leqslant s_\mu, t_\nu < q^2+q $ and $ u \geqslant u^* $, 
	then we have
	\[\# \Omega_{r^*, 0 , s_2,  \cdots, s_{q-1}, t_1, \cdots, t_{q-1}, u  } = \# \Omega_{r^*, 0 , 0, \cdots, 0, t_1,  \cdots, t_{q-1}, u }  + \sum_{\mu=2}^{q-1} s_\mu . \]  
\end{lem}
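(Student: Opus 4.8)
**Proof plan for Lemma 4.11 ($\#\Omega_{r^*,0,s_2,\dots,s_{q-1},t_1,\dots,t_{q-1},u} = \#\Omega_{r^*,0,0,\dots,0,t_1,\dots,t_{q-1},u} + \sum_{\mu=2}^{q-1}s_\mu$).**

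The plan is to prove the statement one coordinate at a time: it suffices to show that increasing a single $s_\mu$ by $1$ (keeping $0\le s_\mu<q^2+q$ and $u\ge u^*$, and keeping $s_1=0$, $r=r^*$) increases $\#\Omega$ by exactly $1$, because then telescoping over $\mu=2,\dots,q-1$ and over the successive unit increments gives the claimed additive contribution $\sum_{\mu=2}^{q-1}s_\mu$. By the permutation symmetry of Lemma~\ref{lem:set_inv2} we may assume we are changing $s_2$. So fix all other parameters and compare $\Omega_{r^*,0,s_2,s_3,\dots}$ with $\Omega_{r^*,0,s_2+1,s_3,\dots}$.

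First I would pass to the equivalent description~\eqref{eq:Omega2}: once $i$ is chosen with $i\ge -r^*$, each $k_\mu$ is forced by $k_\mu=\ceil{(-i-s_\mu)/(q^2+q)}$, each $j_\nu$ is forced by the middle condition, and the only remaining constraint is the final inequality $-u\le -q^2 i-(q^2+q)\sum j_\nu-(q+1)\sum k_\mu$. Thus $\Omega$ is in bijection with the set of admissible values of $i$, and I must compare, for the two values $s_2$ and $s_2+1$, the sets of $i\ge -r^*$ satisfying the last inequality. The effect of raising $s_2$ by $1$ is: for those $i$ with $q^2+q \mid i+s_2+1$ we have $k_2$ jump up by $1$ (hence $j_\nu$ possibly shift by the fractional-ceiling rule), and for all other $i$ nothing changes in $k_2$. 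The heart of the matter is to show that this replacement moves the ``upper boundary'' for $i$ by exactly one lattice point, i.e. that exactly one value of $i$ that was admissible for $s_2$ becomes inadmissible for $s_2+1$ (or vice versa, depending on signs — I expect, following the sign conventions in Lemmas~\ref{lem:Omega_large} and~\ref{lem:Omega_main}, that one value is gained). Concretely I would expand the right-hand side of the last inequality as an explicit piecewise-linear (in $i$) function with slope controlled by the $\ceil{\cdot}$ terms, exactly as is done in Equations~\eqref{eq:invalid} and around Lemma~\ref{lem:Omega_large}, and read off how its super-level set $\{i : \text{RHS}\ge -u\}$ changes.

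Alternatively — and this may be cleaner — I would mimic the method of Lemma~\ref{lem:Omega_large} directly: write $\Omega_{r^*,0,s_2+1,s_3,\dots}\setminus\Omega_{r^*,0,s_2,s_3,\dots}$ (after possibly enlarging $r^*$ to a big $r$ and using Lemma~\ref{lem:Omega_large} to subtract off the known linear-in-$r$ part, so that the hypothesis $r\ge r^*$ plays the same role as there) as an explicit lattice set, substitute $k_\mu=\ceil{(-i-s_\mu)/(q^2+q)}$ to kill the $k$-conditions, substitute the $j_\nu$-formulas to kill the middle conditions, and check using $u\ge u^*=q^3+q^2$ that the final inequality then reduces to a single interval of length $1$ for $i$ — giving one extra point. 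The routine but slightly delicate part is bookkeeping the ceilings: when $s_2\mapsto s_2+1$, one has $\ceil{(-i-s_2-1)/(q^2+q)}=\ceil{(-i-s_2)/(q^2+q)}$ unless $q^2+q\mid i+s_2+1$, in which case it increases by $1$; and the induced change in $j_\nu$ via the $\frac1q\sum k_\mu$ term must be tracked across all $\nu$ simultaneously. I expect this ceiling/floor bookkeeping, together with verifying that the hypothesis $u\ge u^*$ is exactly what guarantees the relevant inequality is not vacuously true or false, to be the main obstacle; everything else is linear algebra over $\mathbb{Z}$ of the kind already carried out in Lemmas~\ref{lem:Omega_reduction}--\ref{lem:Omega_main}. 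Once the single-step increment is established, the lemma follows by summing.
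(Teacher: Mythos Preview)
Your plan differs from the paper's, and the gap lies precisely in what you flag as ``the main obstacle''. You reduce the lemma to the claim that each unit increase of a single $s_\mu$ changes $\#\Omega$ by exactly~$1$, but you do not prove this, and it is not clear that it is any easier than the lemma itself. Two concrete issues. First, a small sign slip: when $s_2\mapsto s_2+1$ the argument of $\ceil{(-i-s_2)/(q^2+q)}$ \emph{decreases}, so $k_2$ drops (not jumps up) by $1$ exactly at $i\equiv -s_2-1\pmod{q^2+q}$. Second, your approach~B is not well-posed as written: the sets $\Omega_{r^*,0,s_2,\dots}$ and $\Omega_{r^*,0,s_2+1,\dots}$ are \emph{not} nested as sets of tuples (different $s_2$ shift the window for $i+(q^2+q)k_2$), so ``$\Omega\setminus\Omega$'' has no direct meaning. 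Passing to the sets $A_{s_2}=\{i\ge -r^*: F(i,s_2)\le u\}$ of admissible $i$ fixes this and gives $A_{s_2}\subseteq A_{s_2+1}$, but then the extra points are those $i$ in a fixed residue class modulo $q^2+q$ with $F(i,s_2+1)\le u<F(i,s_2)$; since $\Delta F=(q+1)+(q^2+q)m$ with $m\in\{0,\dots,q-1\}$ depending on how many $j_\nu$ drop, showing that \emph{exactly one} such $i$ exists for every $s_2$ and every $u\ge u^*$ requires the very ceiling bookkeeping you defer. You cannot shortcut this via $\ell(G+P)-\ell(G)\in\{0,1\}$ without circularity, because that identification of $\#\Omega$ with $\ell(G)$ is what Proposition~\ref{prop:Omega2} (and hence this lemma) is used to establish.

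The paper instead reduces $s_2$ to $0$ in one stroke by a slice-and-shift rearrangement. It slices $\Omega_{r^*,0,s_2,s_3,\dots}$ by the value $k_1=k$ (using $s_1=0$), changes variables so each slice $\Phi_k$ has $0\le\widetilde i<q^2+q$, and splits the $s_2$-window $[-s_2,\,q^2+q-s_2)$ for $\widetilde i+(q^2+q)\widetilde k_2$ into $[-s_2,0)$ and $[0,\,q^2+q-s_2)$. After shifting $\widetilde k_2\mapsto\widetilde k_2+1$, the first piece of slice $k$ dovetails with the second piece of slice $k+1$ to form the $k{+}1$ slice of the $s_2=0$ problem. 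Summing over $k\le q-1$ telescopes everything except a single leftover piece $\Phi_{q-1}^{(1)}$, which (by the same ``last condition becomes vacuous'' argument as in Lemma~\ref{lem:Omega_large}) has exactly $s_2$ points. Then Lemma~\ref{lem:set_inv2} lets one iterate over $\mu$. This avoids tracking the $j_\nu$-ceilings across all $\nu$ simultaneously, which is exactly where your unit-step plan gets stuck.
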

\begin{proof}
	The proof is proceeded as that of Lemma \ref{lem:Omega_main}. We start the proof by fixing the index $k_1 $. Then we define 
	\[  \Theta_{k}: =  \Big\{\, (i, j_1, \cdots, j_{q-1},  k_1 = k, k_2, \cdots, k_{q-1}) \in \Omega_{r^* ,0,s_2, \cdots, s_{q-1} ,t_1,\cdots, t_{q-1}, u} \,  \Big\}, \] 
	where $ k=\ceil{\dfrac{-i}{q^2+q}} \leqslant q-1 $.
	So we obtain 
	\begin{align}\label{eq:Thetak}
	\# \Omega_{r^*, 0 , s_2,  \cdots, s_{q-1}, t_1,   \cdots, t_{q-1}, u } =\sum _{k=- \infty }^{q-1}  \# \Theta_k.  
	\end{align}
	
	For convenience we always assume the  subscripts $ \nu = 1 , \cdots, q -1 $, $ \mu =2, \cdots , q-1 $ and $ \kappa = 3 , \cdots , q-1 $.
	
	Let $ \widetilde{i} := i + (q^2+q) k  $,   $ \widetilde{k}_\mu =  k_\mu - k  $ and $ \widetilde{j}_\nu : = j_\nu -(q+1) k   $. We find that $ \Theta_k$ is equivalent to 
	\begin{align}
	\Phi_{k}: =  \Big\{ \,& (\widetilde{i}, \widetilde{j}_1,\cdots,\widetilde{j}_{q-1}, \widetilde{k}_2,\cdots,\widetilde{k}_{q-1} ) \,\Big|\,   0  \leqslant \widetilde{i} < q^2+q , \nonumber \\
	&-s_2\leqslant \widetilde{i} + (q^2+q) \widetilde{k}_2< -s_2+ (q^2+q) , \label{eq:wide_Phi_2} \tag{$G_2 $}  \\
	&-s_\kappa \leqslant \widetilde{i} + (q^2+q) \widetilde{k}_\kappa < -s_\kappa+ (q^2+q) \textup{ for } 3\leqslant \kappa \leqslant q-1,  \nonumber \\
	& -t_\nu \leqslant  q\widetilde{i} + (q^2+q) \widetilde{j}_\nu  +  (q+1)k - (q+1) \sum_{\mu=2}^{q-1}   \widetilde{k}_\mu < -t_\nu + (q^2+q) \textup{ for } 1\leqslant \nu \leqslant q-1, \nonumber \\
	& -u \leqslant  -q^2 \widetilde{i}    - (q^2+q) \sum_{\nu=1}^{q-1}  \widetilde{j}_\nu   -(q+1) \sum_{\mu=2}^{q-1}  \widetilde{k} _\mu  +(q+1)k 	    \, \Big\}. \nonumber
	\end{align}
	We decompose the set  $   \Phi_{k} $ by splitting Condition (\ref{eq:wide_Phi_2}), namely $ \Phi_{k}  =  \Phi_{k}^{(1)} \cup  \Phi_{k}^{(2)} $, where
	\begin{align*}
	\Phi_{k}^{(1)}: =  \Big\{ \,  &(\widetilde{i}, \widetilde{j}_1,\cdots,\widetilde{j}_{q-1}, \widetilde{k}_2,\cdots,\widetilde{k}_{q-1} ) \,\Big|\,   0  \leqslant \widetilde{i} < q^2+q , \\
	&-s_2 \leqslant \widetilde{i} + (q^2+q) \widetilde{k}_2< 0 ,  \\
	&-s_\kappa \leqslant \widetilde{i} + (q^2+q) \widetilde{k}_\kappa < -s_\kappa+ (q^2+q) \textup{ for } 3\leqslant \kappa \leqslant q-1 ,  \\
	&-t_\nu \leqslant  q\widetilde{i} + (q^2+q) \widetilde{j}_\nu  +  (q+1)k - (q+1) \sum_{\mu=2}^{q-1} \widetilde{k}_\mu < -t_\nu + (q^2+q) \textup{ for } 1\leqslant \nu \leqslant q-1, \\
	& -u \leqslant  -q^2 \widetilde{i}    - (q^2+q) \sum_{\nu=1}^{q-1}  \widetilde{j}_\nu   -(q+1) \sum_{\mu=2}^{q-1} \widetilde{k} _\mu  +(q+1)k \, \Big\}, 
	\end{align*}
	and
	\begin{align*}
	\Phi_{k}^{(2)}: =  \Big\{ \, &(\widetilde{i}, \widetilde{j}_1,\cdots,\widetilde{j}_{q-1}, \widetilde{k}_2,\cdots,\widetilde{k}_{q-1} )   \,\Big|\,  0  \leqslant \widetilde{i} < q^2+q , \\
	&\phantom{-s}0 \leqslant \widetilde{i} + (q^2+q) \widetilde{k}_2< -s_2+ (q^2+q) ,   \\
	& -s_\kappa \leqslant \widetilde{i} + (q^2+q) \widetilde{k}_\kappa < -s_\kappa+ (q^2+q) \textup{ for } 3\leqslant \kappa \leqslant q-1,  \\
	& -t_\nu \leqslant  q\widetilde{i} + (q^2+q) \widetilde{j}_\nu  +  (q+1)k - (q+1) \sum_{\mu=2}^{q-1} \widetilde{k}_\mu < -t_\nu + (q^2+q) \textup{ for } 1\leqslant \nu \leqslant q-1, \\
	& -u 	  \leqslant  -q^2 \widetilde{i}    - (q^2+q) \sum_{\nu=1}^{q-1}  \widetilde{j}_\nu   -(q+1) \sum_{\mu=2}^{q-1} \widetilde{k} _\mu  +(q+1)k  \, \Big\}.
	\end{align*}
	
	Let $\widehat k_{2} =\widetilde{k}_{2} +1 $, then $ \Phi_k^{(1)}$ becomes 
	\begin{align*}
	\widehat{\Phi}_{k}^{(1)}: = \Big\{\, &(\widetilde{i}, \widetilde{j}_1,\cdots,\widetilde{j}_{q-1}, \widehat{k}_2,\widetilde{k}_3,\cdots,\widetilde{k}_{q-1}  ) \,\Big|\, 0  \leqslant \widetilde{i} < q^2+q ,\\
	&\phantom{-}(q^2  +q )-s_2  \leqslant \widetilde{i} + (q^2+q) \widehat{k}_2<  q^2+q ,\\
	&-s_\kappa \leqslant \widetilde{i} + (q^2+q) \widetilde{k}_\kappa < -s_\kappa+ (q^2+q) \textup{ for } 3\leqslant \kappa \leqslant q-1,\\
	&-t_\nu \leqslant  q\widetilde{i} + (q^2+q) \widetilde{j}_\nu  +  (q+1)(k+1) - (q+1) \widehat{k}_2 - (q+1) \sum_{\kappa=3}^{q-1}  \widetilde{k}_\kappa < -t_\nu + (q^2+q)\\
	& \qquad \qquad \qquad \qquad \qquad \qquad \qquad \qquad  \qquad \qquad \qquad \qquad  \textup{  for  } 1\leqslant \nu \leqslant q-1,\\
	&-u \leqslant  -q^2 \widetilde{i}    - (q^2+q) \sum_{\nu=1}^{q-1} \widetilde{j}_\nu  -(q+1)\widehat{k}_2  -(q+1) \sum_{\kappa=3}^{q-1}  \widetilde{k} _\kappa  +(q+1)(k+1)	~  \Big\}.
	\end{align*}
	Let us define the lattice point set
	\begin{align*}
	{\Psi}_{k} : = \Big\{\, &( \widetilde{i},\widetilde{j}_1,\cdots,\widetilde{j}_{q-1}, \widetilde{k}_2,\cdots,\widetilde{k}_{q-1} )  \,\Big|\, 0  \leqslant  \widetilde{i} < q^2+q ,\\
	&\phantom{-s}0   \leqslant \widetilde{i} + (q^2+q) \widetilde{k}_2<  q^2+q ,\\
	& -s_\kappa \leqslant \widetilde{i} + (q^2+q) \widetilde{k}_\kappa < -s_\kappa+ (q^2+q) \textup{ for } 3\leqslant \kappa \leqslant q-1,\\
	&-t_\nu \leqslant  q\widetilde{i} + (q^2+q) \widetilde{j}_\nu  +  (q+1)k   - (q+1) \sum_{\mu=2}^{q-1}  \widetilde{k}_\mu < -t_\nu + (q^2+q) \textup{ for } 1\leqslant \nu \leqslant q-1,\\
	&-u \leqslant  -q^2 \widetilde{i}    - (q^2+q) \sum_{\nu=1}^{q-1} \widetilde{j}_\nu     -(q+1) \sum_{\mu=2}^{q-1}  \widetilde{k} _\mu  +(q+1)k	~  \Big\}.
	\end{align*}
	Note that
	\begin{equation*}
	\widehat \Phi_{k}^{(1)} \cup\Phi_{k+1}^{(2)} = \Psi _{k+1}.
	\end{equation*}
	Then we have
	\begin{equation} \label{eq:widehat_Phi}
	\sum_{k=-\infty}^{q-1} \#  \Phi _{k} =\sum_{k=-\infty}^{q-1} \#  \Phi^{(1)} _{k}+\sum_{k=-\infty}^{q-1} \#  \Phi^{(2)} _{k} = \sum_{k=-\infty}^{q-1} \#  \Psi _{k}+ \# \Phi^{(1)}_{q-1}  .
	\end{equation}
	By a similar argument as we have done in the proof of Lemma \ref{lem:Omega_large}, we claim that 
	\[  \# \Phi_{q-1}^{(1)}= s_2  . \]
	If we combine Equations \eqref{eq:Thetak} and \eqref{eq:widehat_Phi}, then
	\begin{align}\label{eq:Omega_s2}
	\# \Omega_{r^*, 0 , s_2,s_3, \cdots, s_{q-1}, t_1, t_2, \cdots, t_{q-1}, u } = \# \Omega_{r^*, 0 , 0,s_3 \cdots, s_{q-1}, t_1, t_2, \cdots, t_{q-1},u }  + s_2.  
	\end{align}  
	Applying Lemma \ref{lem:set_inv2} and Equation 
	\eqref{eq:Omega_s2}, we get
	\[\# \Omega_{r^*, 0 , s_2,s_3, \cdots, s_{q-1}, t_1, t_2, \cdots, t_{q-1}, u} = \# \Omega_{r^*, 0 , 0, \cdots, 0, t_1, t_2, \cdots, t_{q-1},u }  + \sum s_\mu  .\]
	This finishes the proof.   
\end{proof}

\begin{lem} \label{lem:Omega_large2}
	Let $r^*: = q^3 - q $, $u ^{*}:=  q^3+ q^2 $. If $ r \geqslant r^* $, $ u \geqslant u^* $ and $0 \leqslant s_\mu, t_\nu < q^2+q $ for $ 1\leqslant \mu,\nu \leqslant q-1 $, we have
	\[ \# \Omega_{r , s_1, \cdots, s_{q-1}, t_1, t_2,\cdots, t_{q-1}, u } =\# \Omega_{r^* , s_1, \cdots, s_{q-1}, t_1, t_2,\cdots, t_{q-1}, u^* } +( r-r^*) + (u-u^* ). \]
\end{lem}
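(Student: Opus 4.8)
The plan is to transplant the proof of Lemma \ref{lem:Omega_large} to the higher-dimensional lattice point set defined in \eqref{eq:Omega2_0}: the essential observation is again that the inequality attached to $P^{(3)}$ becomes redundant once $r$ is large, so that enlarging $r$ adds only points controlled by the single free variable $i$.

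First I would note that for $r \geqslant r^{*}$ the only defining inequality of \eqref{eq:Omega2_0} that changes is $-r \leqslant i$. Hence $\Omega_{r^{*},s_1,\cdots,s_{q-1},t_1,\cdots,t_{q-1},u} \subseteq \Omega_{r,s_1,\cdots,s_{q-1},t_1,\cdots,t_{q-1},u}$, and the complement consists exactly of the tuples $(i,j_1,\cdots,j_{q-1},k_1,\cdots,k_{q-1})$ satisfying all conditions of \eqref{eq:Omega2_0} together with $-r \leqslant i \leqslant -(r^{*}+1)$.

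The main step is to show, in the spirit of Equation \eqref{eq:invalid}, that on this complement the last condition
\[
-u \leqslant -q^{2}i-(q^{2}+q)\sum_{\nu=1}^{q-1}j_\nu-(q+1)\sum_{\mu=1}^{q-1}k_\mu
\]
holds automatically. Put $N_\nu := qi+(q^{2}+q)j_\nu-(q+1)\sum_{\mu=1}^{q-1}k_\mu$; then the third family of conditions forces $N_\nu < q^{2}+q$ (since $t_\nu \geqslant 0$), and a direct rearrangement, using that $\nu$ runs over $q-1$ indices, yields
\[
-q^{2}i-(q^{2}+q)\sum_{\nu=1}^{q-1}j_\nu-(q+1)\sum_{\mu=1}^{q-1}k_\mu = -qi-\sum_{\nu=1}^{q-1}N_\nu-(q^{2}+q)\sum_{\mu=1}^{q-1}k_\mu.
\]
From the second family of conditions, $k_\mu = \ceil{(-i-s_\mu)/(q^{2}+q)} < (-i)/(q^{2}+q)+1$ because $s_\mu \geqslant 0$, hence $-(q^{2}+q)\sum_{\mu}k_\mu > (q-1)i-(q^{3}-q)$; combining this with $\sum_{\nu}N_\nu < (q-1)(q^{2}+q)=q^{3}-q$ and with $-i \geqslant r^{*}+1 = q^{3}-q+1$, the right-hand side above exceeds $-i-2(q^{3}-q)$, which is at least $-q^{3}+q+1$, hence is larger than $-(q^{3}+q^{2}) = -u^{*} \geqslant -u$. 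So the $P^{(3)}$-inequality is implied by the others, and the remaining conditions of \eqref{eq:Omega2_0} merely determine each $k_\mu$ and each $j_\nu$ uniquely in terms of $i$, as recorded in \eqref{eq:Omega2}. Therefore the complement is in bijection with $\{\,i\in\mathbb{Z} \mid -r \leqslant i \leqslant -(r^{*}+1)\,\}$, so that $\#\Omega_{r,s_1,\cdots,s_{q-1},t_1,\cdots,t_{q-1},u} = \#\Omega_{r^{*},s_1,\cdots,s_{q-1},t_1,\cdots,t_{q-1},u}+(r-r^{*})$.

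Finally I would invoke Lemma \ref{lem:set_inv2}, which gives $\#\Omega_{r,s_1,\cdots,s_{q-1},t_1,\cdots,t_{q-1},u} = \#\Omega_{u,t_1,\cdots,t_{q-1},s_1,\cdots,s_{q-1},r}$, and run the same reduction on the first coordinate of the right-hand set; here $u \geqslant u^{*} > r^{*}$ plays the role of the first coordinate and $r \geqslant r^{*}$ that of the last, which is amply enough for the estimates above. This yields $\#\Omega_{\rho,s_1,\cdots,s_{q-1},t_1,\cdots,t_{q-1},u} = \#\Omega_{\rho,s_1,\cdots,s_{q-1},t_1,\cdots,t_{q-1},u^{*}}+(u-u^{*})$ for any $\rho \geqslant r^{*}$. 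Combining the two reductions,
\[
\#\Omega_{r,\cdots,u}-\#\Omega_{r^{*},\cdots,u^{*}} = \big(\#\Omega_{r,\cdots,u}-\#\Omega_{r^{*},\cdots,u}\big)+\big(\#\Omega_{r^{*},\cdots,u}-\#\Omega_{r^{*},\cdots,u^{*}}\big) = (r-r^{*})+(u-u^{*}),
\]
which is the assertion. I expect the only delicate point to be the bookkeeping in the algebraic identity above and the chain of estimates showing the $P^{(3)}$-inequality is redundant; everything else is a faithful copy of the one-place argument in Lemma \ref{lem:Omega_large}.
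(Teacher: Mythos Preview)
Your proposal is correct and follows exactly the approach the paper intends: the paper's own proof of Lemma~\ref{lem:Omega_large2} consists of the single sentence ``The proof is similar to that of Lemma~\ref{lem:Omega_large},'' and you have carried out precisely that adaptation, including the use of Lemma~\ref{lem:set_inv2} in place of Lemma~\ref{lem:set_inv} to handle the $u$-coordinate. Your algebraic identity and the chain of estimates showing the $P^{(3)}$-inequality is redundant on the complement are both correct.
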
 
\begin{proof}
	The proof is similar to that of Lemma \ref{lem:Omega_large}.
\end{proof}

\begin{lem}\label{lem:phi2}
	Let 
	\begin{align*}
	\Gamma_{b}^{(1)}: = \Big\{\, ( {i},  {j}) \,\Big|\, 0   \leqslant  {i} < q^2+q ,
	\,\,
	- t_1 &\leqslant  q {i}+ (q^2+q) \ {j}  + (q+1)b  < 0 	 \,\Big\}.
	\end{align*}	
	Then $ \sum_{b=0}^{q-1} \# \Gamma_b ^{(1)}= qt_1 $.
\end{lem}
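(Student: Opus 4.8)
The plan is to imitate the proof of Lemma \ref{lem:Phi_b}, slicing $\Gamma_b^{(1)}$ into the level sets of the linear form $q i + (q^2+q) j$. Reusing the notation of that proof, I would set $L_\lambda := \{(i,j)\mid 0 \leqslant i < q^2+q,\ qi+(q^2+q)j = -\lambda\}$; since $qi+(q^2+q)j = q\big(i+(q+1)j\big)$, the set $L_\lambda$ is empty unless $q \mid \lambda$, and in that case $\# L_\lambda = q$ (fix $j$, read off $i$, and count the admissible $j$), exactly as already established there. Writing $\lambda := -(qi+(q^2+q)j)$, the defining condition $-t_1 \leqslant qi+(q^2+q)j+(q+1)b < 0$ becomes $(q+1)b < \lambda \leqslant t_1+(q+1)b$, so $\Gamma_b^{(1)} = \bigcup_{\lambda=(q+1)b+1}^{t_1+(q+1)b} L_\lambda$, a disjoint union since distinct $\lambda$ give distinct values of the form.

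Counting the multiples of $q$ in the interval $\big((q+1)b,\ t_1+(q+1)b\big]$ then gives $\# \Gamma_b^{(1)} = q\big(\floor{(t_1+(q+1)b)/q} - \floor{(q+1)b/q}\big)$. Because $0 \leqslant b < q$, one has $\floor{(q+1)b/q} = b$ and $\floor{(t_1+(q+1)b)/q} = b + \floor{(t_1+b)/q}$, so the two $b$'s cancel and $\# \Gamma_b^{(1)} = q\floor{(t_1+b)/q}$. Summing over $b=0,\dots,q-1$ and using the elementary identity $\sum_{b=0}^{q-1}\floor{(t_1+b)/q} = t_1$ (the same one invoked in Lemma \ref{lem:Phi_b}) yields $\sum_{b=0}^{q-1}\# \Gamma_b^{(1)} = q t_1$.

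Alternatively, and perhaps more economically, one can deduce the result directly from Lemma \ref{lem:Phi_b}: with $t=t_1$, the set $\Gamma_b^{(1)}$ is obtained from $\Phi_b^{(1)}$ by shrinking the upper bound from $0$ to $-(q+1)b$, so $\Gamma_b^{(1)} \subseteq \Phi_b^{(1)}$ and $\Phi_b^{(1)} \setminus \Gamma_b^{(1)} = \bigcup_{\lambda=1}^{(q+1)b} L_\lambda$ has exactly $q\floor{(q+1)b/q} = qb$ points; hence $\sum_{b=0}^{q-1}\#\Gamma_b^{(1)} = \sum_{b=0}^{q-1}\#\Phi_b^{(1)} - q\sum_{b=0}^{q-1}b = \big(q^2(q-1)/2 + q t_1\big) - q^2(q-1)/2 = q t_1$. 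There is no real obstacle in either route; the only thing demanding attention is the shift of the summation range for $\lambda$ relative to Lemma \ref{lem:Phi_b}, and the corresponding floor bookkeeping under the hypothesis $0 \leqslant b < q$. I expect the direct version above to be the one presented, as it parallels Lemma \ref{lem:Phi_b} almost verbatim.
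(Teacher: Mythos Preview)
Your proof is correct and follows essentially the same slicing argument as the paper: the paper defines $H_t := \{(i,j)\mid 0\leqslant i<q^2+q,\ qi+(q^2+q)j+qb=-t\}$ (absorbing $qb$ rather than $(q+1)b$ into the level-set parameter), obtains $\Gamma_b^{(1)}=\bigcup_{t=b+1}^{t_1+b}H_t$, and then sums $q\floor{(t_1+b)/q}$ over $b$ just as you do. Your alternative derivation via Lemma~\ref{lem:Phi_b} is a valid shortcut that the paper does not mention.
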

\begin{proof}
	The proof is similar to that of Lemma \ref{lem:Phi_b}. Let  
	\begin{align*}
	{H}_{t}: = \Big\{\, ( {i},  {j})\,\Big|\, 0   \leqslant  {i} < q^2+q , \,\,	  q {i}+ (q^2+q)  {j}  + qb=-t    \,\Big\}.
	\end{align*}
	Then one verifies that
	\[  	\# H_t = \begin{cases}
	q  & \textup{ for $ q \mid t $}, \\
	0  & \textup{ for $ q \nmid t $} .
	\end{cases} \]
	It follows that
	\begin{align*}
	\sum_{b=0}^{q-1} \# \Gamma_b ^{(1)}& =  \sum_{b=0}^{q-1} \sum_{t=b+1}^{t_1+b } \# H_t  
	 =  \sum_{b=0}^{q-1} \floor{\frac{t_1+b }{q}}
	=q t_1,
	\end{align*}
	completing the proof of this lemma.
\end{proof}

\begin{lem}\label{lem:t}
	Set $r^*: = q^3-q$, $u ^{*}:=  q^3+q^2 $.
	Assume that  $ 0\leqslant t_\nu <q^2+q $ for $ 1\leqslant \nu \leqslant q-1 $, 
	then we have
	\[ \# \Omega_{r^*, 0, \cdots, 0, \,t_1,  \cdots, t_{q-1}, u^* } = \# \Omega_{r^*, 0, \cdots, 0,\, 0,  \cdots, 0, u^* } + \sum_{\nu =1 }^{q-1} t_\nu.  \]
\end{lem}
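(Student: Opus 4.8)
The plan is to follow the template of Lemmas~\ref{lem:Omega_main} and~\ref{lem:s}, reducing the parameters $t_1,\dots,t_{q-1}$ to the all-zero case by peeling them off one at a time. It suffices to prove the single reduction step
\[
\# \Omega_{r^*,\, 0,\dots,0,\; t_1,t_2,\dots,t_{q-1},\; u^*}
= \# \Omega_{r^*,\, 0,\dots,0,\; 0,t_2,\dots,t_{q-1},\; u^*} + t_1 ,
\]
since Lemma~\ref{lem:set_inv2} permits permuting the subscripts $1,\dots,q-1$ of the $t$'s while keeping $t_0 = u^*$ fixed, so the step applies in turn to $t_2,\dots,t_{q-1}$; summing the resulting identities gives the asserted $+\sum_{\nu=1}^{q-1} t_\nu$, and since $r^*$, $u^*$ and all the $s_\mu = 0$ never change, no additional bookkeeping of the linear form is needed.

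For the single step I would argue exactly as in Lemma~\ref{lem:s}, but with the $j$-variables playing the role of the $k$-variables. Since $s_1 = \dots = s_{q-1} = 0$, all the $k_\mu$ are forced to equal one integer $k := \ceil{-i/(q^2+q)}$, with $k \le q-1$ because $i \ge -r^*$; so I slice $\Omega_{r^*,0,\dots,0,t_1,\dots,t_{q-1},u^*}$ by $k$ and, as in Lemma~\ref{lem:Omega_main}, also write $k = -qa+b$ with $a\ge 0$, $0\le b<q$. The substitution $\widetilde i := i + (q^2+q)k$ (so $0 \le \widetilde i < q^2+q$), together with the attendant shifts of the $j_\nu$, turns each slice into a bounded lattice set $\Phi_{a,b}$; splitting the $t_1$-inequality according to whether its middle term lies in $[-t_1,0)$ or in $[0,\,q^2+q-t_1)$ decomposes $\Phi_{a,b} = \Phi_{a,b}^{(1)} \cup \Phi_{a,b}^{(2)}$, and the shift $\widehat j_1 := \widetilde j_1 + 1$ matches $\widehat\Phi_{a,b}^{(1)}$ against $\Phi_{a+1,b}^{(2)}$, their union being an auxiliary set $\Psi_{a+1,b}$ whose union over $(a,b)$ is precisely $\Omega_{r^*,0,\dots,0,0,t_2,\dots,t_{q-1},u^*}$, i.e.\ the same set with $t_1$ set to $0$. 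Telescoping in $a$ (for each fixed $b$) then leaves $\sum_{a,b}\#\Phi_{a,b} = \sum_{a,b}\#\Psi_{a,b} + \sum_{b}\#\Phi_{0,b}^{(1)}$, reducing the step to computing the boundary sum $\sum_{b=0}^{q-1}\#\Phi_{0,b}^{(1)}$. Here the hypothesis $u = u^*$ enters as in the proof of Lemma~\ref{lem:Omega_main}: at $a = 0$ the last, ``$u$'', inequality is automatically satisfied, so each $\Phi_{0,b}^{(1)}$ is genuinely two-dimensional.

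That boundary sum is exactly what Lemma~\ref{lem:phi2} computes. In the $k$-case of Lemma~\ref{lem:s} the pivot variable occurs with coefficient $1$ in the defining inequalities and the boundary is a plain interval of length $s_2$; here $i$ occurs in the $j_\nu$-relations through $qi$, so $\Phi_{0,b}^{(1)}$ coincides (after relabelling) with the set $\Gamma_b^{(1)}$ of Lemma~\ref{lem:phi2}, whence $\sum_{b=0}^{q-1}\#\Phi_{0,b}^{(1)} = \sum_{b=0}^{q-1}\#\Gamma_b^{(1)} = qt_1$. The surplus factor $q$ is the same $q$-fold multiplicity already carried by the $k = -qa+b$ decomposition (the mechanism that in Lemma~\ref{lem:Omega_main} turns a ``$q\,\#\Omega = \cdots$'' identity into a clean formula); dividing it out leaves the net boundary contribution $t_1$, which gives the single step, and iterating over $\nu$ proves the lemma. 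I expect the main obstacle to be precisely this constant bookkeeping --- keeping the substitutions $\widetilde i$, the shifted $j_\nu$, and $\widehat j_1$ consistent through the decomposition, confirming that the telescoping in $a$ really contributes only $\sum_b\#\Phi_{0,b}^{(1)}$ and nothing from $a\to\infty$, and lining up the $q$'s from Lemma~\ref{lem:phi2} against those from the slicing so that the net contribution is $t_1$ rather than $qt_1$.
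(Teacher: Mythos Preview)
Your overall plan --- reduce $t_1$ to $0$, then iterate via Lemma~\ref{lem:set_inv2} --- matches the paper. The gap is in the telescoping step. When you shift $\widehat j_1 := \widetilde j_1 + 1$, the $u$-constraint moves by exactly $(q^2+q)$ (since $j_1$ appears in $-(q^2+q)\sum_\nu j_\nu$), whereas passing from $a$ to $a+1$ moves it by $(q^3+q^2)$. So the pair $\widehat\Phi_{a,b}^{(1)}$ and $\Phi_{a+1,b}^{(2)}$ do \emph{not} share a common $u$-inequality, and their union is not your $\Psi_{a+1,b}$; the identity $\sum_{a,b}\#\Phi_{a,b} = \sum_{a,b}\#\Psi_{a,b} + \sum_b\#\Phi_{0,b}^{(1)}$ fails as written. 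This is exactly why in Lemma~\ref{lem:s} the analogous telescoping works cleanly in $k$ alone (there the shift $\widehat k_2 = \widetilde k_2 + 1$ and the shift $k\to k+1$ both move every later constraint by the same $(q+1)$), but here the asymmetry between the $j$-variable and the $k$-slicing breaks the simple match.

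The paper repairs this by introducing an extra parameter $l$: it works simultaneously with $u = u^* + (q^2+q)l$ for $0\le l<q$, so that $\widehat\Gamma_{a,b,l}^{(1)}$ glues to $\Gamma_{a,b,l+1}^{(2)}$ (with wraparound $l=q-1 \to l=0,\,a\to a+1$) to form $\Psi_{a,b,l+1}$. Summing over all $l$ and using Lemma~\ref{lem:Omega_large2} gives $q\,N(t_1)$ on one side, and the boundary term $\sum_b\#\Gamma_{0,b,q-1}^{(1)} = qt_1$ from Lemma~\ref{lem:phi2} on the other; the common factor $q$ then cancels. Your final paragraph correctly senses that a factor of $q$ must be divided out, but misattributes it to the $b$-range of $k=-qa+b$: that $b$-sum is already part of $\sum_{a,b}\#\Phi_{a,b}$ and cannot be reused. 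The missing ingredient is precisely the auxiliary $l$-summation.
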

\begin{proof}
	The proof is similar to that of Lemma \ref{lem:s} and we give the details here.
	We consider the set $ \Omega_{r^*, 0, \cdots, 0, t_1,  \cdots, t_{q-1}, u  } $, where $u = u^{*} + (q^2+q )l$ for $ 0 \leqslant l < q  $. Define
	\[  \Gamma_{k} :=  \Big\{\, (i, j_1, \cdots, j_{q-1}) \,\Big| \, (i, j_1, \cdots, j_{q-1},  k_1 =    \cdots= k_{q-1}=k) \in \Omega_{r^* ,0, \cdots, 0 ,t_1, \cdots, t_{q-1}, u} \,  \Big\}, \] 
	where $ k=\ceil{\dfrac{-i}{q^2+q}} $. Plugging $ \widetilde{i}: = i + (q^2+q) k  $  and $ \widetilde{j}_\nu : = j_\nu -(q+1) k   $ into $ \Gamma_{k} $ gives that 
	\begin{align*}
	\Gamma_{k} = \Big\{ \, &( \widetilde{i}, \widetilde{j}_1,\cdots,\widetilde{j}_{q-1} ) \, \Big| \, 0  \leqslant \widetilde{i} < q^2+q ,\\
	&-t_\nu \leqslant  q\widetilde{i} + (q^2+q) \widetilde{j}_\nu  +  (q+1)k   < -t_\nu + (q^2+q) \textup{ for } 1 \leqslant \nu \leqslant q-1,\\
	&-u \leqslant  -q^2 \widetilde{i}    - (q^2+q) \sum_{\nu = 1}^{q-1} \widetilde{j}_\nu      +(q+1)k 	\,\Big\}.
	\end{align*}
	Write $ k=-qa  +b $ with $a, b \in \mathbb{Z}$ and $0 \leqslant b <q  $. Let $\overline{i} := \widetilde{i}$ and $ \overline{j}_{\nu}  : =\widetilde{ j}_{\nu} -a $. It then follows that $ \Gamma_{k} $ becomes
	\begin{align*}
	{\Gamma} _{a,b,l}: = \Big\{\, & (\overline{i}, \overline{j}_1, \cdots,\overline{j}_{q-1} )\,\Big|\, 0 \leqslant \overline{i} < q^2+q ,\\
	&- t_1 \leqslant  q\overline{i}+ (q^2+q) \overline{j}_1 + (q+1) b   <  -t_1  + (q^2+q) ,\\
	&-t_\nu \leqslant  q\overline{i}+ (q^2+q) \overline{j}_\nu  +  (q+1) b < -t_\nu + (q^2+q) \textup{ for } 2 \leqslant \nu \leqslant q-1,\\
	&-(u^{*} + (q^2+q )l ) \leqslant  -q^2 \overline{i}    - (q^2+q) \sum_{\nu = 1}^{q-1} \overline{j}_\nu     +(q+1)b - (q^3+q^2 )a	\, \Big\}.
	\end{align*}
	The set  $   \Gamma_{a,b,l} $ has a partition $ \Gamma_{a,b,l}  =  \Gamma_{a,b,l}^{(1)} \cup  \Gamma_{a,b,l}^{(2)} $, where
	\begin{align*}
	{\Gamma} _{a,b,l}^{(1)}: = \Big\{ \, & (\overline{i}, \overline{j}_1, \cdots,\overline{j}_{q-1} )\,\Big|\, 0  \leqslant \overline{i} < q^2+q ,\\
	&- t_1 \leqslant  q\overline{i}+ (q^2+q) \overline{j}_\nu + (q+1)b  < 0 ,\\
	&-t_\nu \leqslant  q\overline{i}+ (q^2+q) \overline{j}_\nu  +  (q+1) b < -t_\nu + (q^2+q) \textup{ for } 2 \leqslant \nu \leqslant q-1,\\
	&-(u^{*} + (q^2+q )l )\leqslant  -q^2 \overline{i}    - (q^2+q) \sum_{\nu = 1}^{q-1} \overline{j}_\nu     +(q+1)b - (q^3+q^2 )a	\, \Big\},
	\end{align*}
	and
	\begin{align*}
	{\Gamma} _{a,b,l}^{(2)}: = \Big\{ \, & (\overline{i}, \overline{j}_1, \cdots,\overline{j}_{q-1} )\,\Big|\, 0 \leqslant \overline{i} < q^2+q ,\\
	& \phantom{-s} 0 \leqslant  q\overline{i}+ (q^2+q) \overline{j}_1 +  (q+1) b    < -t_1 + (q^2+q) ,\\
	&-t_\nu \leqslant  q\overline{i}+ (q^2+q) \overline{j}_\nu  +  (q+1) b < -t_\nu + (q^2+q) \textup{ for } 2 \leqslant \nu \leqslant q-1,\\
	&-(u^{*} + (q^2+q )l) \leqslant  -q^2 \overline{i}    - (q^2+q) \sum_{\nu = 1}^{q-1} \overline{j}_\nu     +(q+1)b - (q^3+q^2 )a	\, \Big\}.
	\end{align*}
	
	Let $  \widehat{j}_1:= \widetilde {j}_1 +1 $, then $ {\Gamma} _{a,b,l}^{(1)} $ is equivalent to
	\begin{align*}
	\widehat{\Gamma} _{a,b,l}^{(1)}: = \Big\{ \, & (\overline{i}, \widehat{j}_1, \overline{j}_2, \cdots,\overline{j}_{q-1} )\,\Big|\, 0 \leqslant \overline{i} < q^2+q ,\\
	&-t_1+(q^2+q) \leqslant  q\overline{i}+ (q^2+q) \widehat{j}_1  +(q+1)b  < q^2+q,\\
	&-t_\nu \leqslant  q\overline{i}+ (q^2+q) \overline{j}_\nu  +  (q+1) b < -t_\nu + (q^2+q) \textup{ for } 2 \leqslant \nu \leqslant q-1,\\
	&-(u^{*} + (q^2+q )(l+1)) \leqslant  -q^2 \overline{i}   - (q^2+q) \widehat{j}_1 - (q^2+q) \sum_{\nu =2}^{q-1} \overline{j}_\nu     +(q+1)b - (q^3+q^2 )a	\, \Big\}.
	\end{align*}
	We define a lattice point set
	\begin{align*}
	{\Psi} _{a,b,l} : = \Big\{ \, & (\overline{i}, \overline{j}_1, \cdots,\overline{j}_{q-1} )\,\Big|\, 0 \leqslant \overline{i} < q^2+q ,\\
	&\phantom{-s} 0 \leqslant  q\overline{i}+ (q^2+q) \overline{j}_1 +(q+1)b   < q^2+q,\\
	&-t_\nu \leqslant  q\overline{i}+ (q^2+q) \overline{j}_\nu  +  (q+1) b < -t_\nu + (q^2+q) \textup{ for } 2 \leqslant \nu \leqslant q-1,\\
	&-(u^{*} + (q^2+q )l) \leqslant  -q^2 \overline{i}    - (q^2+q) \sum_{\nu = 1}^{q-1} \overline{j}_\nu     +(q+1)b - (q^3+q^2 )a	\,\Big\}.
	\end{align*} 	
	Then the sets $ {\Psi} _{a,b,l} $ are splited into the following forms:
	\begin{align*}
	\Psi_{a, b,1}& =\widehat \Gamma_{a,b,0}^{(1)} \cup 	\Gamma_{a,b,1}^{(2)}, \\
	\Psi_{a, b,2}& =\widehat \Gamma_{a,b,1}^{(1)} \cup 	\Gamma_{a,b,2}^{(2)}, \\
	& \cdots, \\
	\Psi_{a, b,q-1}& =\widehat \Gamma_{a,b,q-2}^{(1)} \cup 	\Gamma_{a,b,q-1}^{(2)}, \\
	\Psi_{a , b,0}& =\widehat \Gamma_{a+1,b,q-1}^{(1)} \cup 	\Gamma_{a ,b,0}^{(2)}. 	 	  	 	  	 	 
	\end{align*} 
	Consequently we have 
	\begin{align*} \sum _ { l=0}^{q-1 } \# \Psi_{a,b,l} &= \# \widehat\Gamma_{a+1,b,q-1}^{(1)}+  \sum _ { l=0}^{q-2 } \#\widehat \Gamma_{a,b,l}^{(1)}+ \sum _ { l=0}^{q-1 } \# \Gamma_{a,b,l}^{(2)},\\   
	\sum _ { l=0}^{q-1 }\sum_{a=0}^{\infty}\# \Psi_{a,b,l}& =\sum_{a=1}^{\infty} \# \widehat\Gamma_{a,b,q-1}^{(1)}+  \sum _ { l=0}^{q-2 }\sum_{a=0}^{\infty} \#\widehat \Gamma_{a,b,l}^{(1)}+ \sum _ { l=0}^{q-1 }\sum_{a=0}^{\infty}\# \Gamma_{a,b,l}^{(2)}.
	\end{align*}
	The last equation leads to
	\begin{align}
	\sum _ { b=0}^{q-1 }\sum _ { l=0}^{q-1 }\sum_{a=0}^{\infty}\# \Gamma_{a,b,l}  &= \sum _ { b=0}^{q-1 }\sum _ { l=0}^{q-1 }\sum_{a=0}^{\infty}\# \Gamma_{a,b,l}^{(1)}+\sum _ { b=0}^{q-1 }\sum _ { l=0}^{q-1 }\sum_{a=0}^{\infty}\# \Gamma_{a,b,l}^{(2)}\nonumber \\
	&= 
	\sum _ { b=0}^{q-1 }\sum _ { l=0}^{q-1 }\sum_{a=0}^{\infty}\# \Gamma_{a,b,l}^{(1)}
	+\sum _ { b=0}^{q-1 }
	\Big(     \sum _ { l=0}^{q-1 }\sum_{a=0}^{\infty}\# \Psi_{a,b,l}-\sum_{a=1}^{\infty} \# \widehat\Gamma_{a,b,q-1}^{(1)}-  \sum _ { l=0}^{q-2 }\sum_{a=0}^{\infty} \#\widehat \Gamma_{a,b,l}^{(1)} \Big)\nonumber \\     
	&= \sum _ { b=0}^{q-1 }\sum _ { l=0}^{q-1 }\sum_{a=0}^{\infty}\# \Psi_{a,b,l} +  \sum _ { b=0}^{q-1 } \# \Gamma_{0,b,q-1}^{(1)} . \label{eq:Phi1}
	\end{align}
	Set $ N(t_1): = \# \Omega_{r^*, 0, \cdots, 0, t_1, t_2,\cdots, t_{q-1}, u^* } $.
	It follows from the definitions of $ \Gamma_{a,b,l} $ and $ \Psi_{a,b,l} $ that 
	\begin{align}
	\sum _ { b=0}^{q-1 }\sum _ { l=0}^{q-1 }\sum_{a=0}^{\infty}\# \Gamma_{a,b,l}& = \sum _ { l=0}^{q-1 } \left( N(t_1) + (q^2+q) l \right) = q N(t_1) + (q^4-q^2)/2, \label{eq:phi1}\\ 
	\sum _ { b=0}^{q-1 }\sum _ { l=0}^{q-1 }\sum_{a=0}^{\infty}\# \Psi_{a,b,l} &= \sum _ { l=0}^{q-1 } \left( N(0) + (q^2+q) l \right) = q N(0) + (q^4-q^2)/2.\label{eq:phi2}
	\end{align}
	By Lemma \ref{lem:phi2}, 
	\begin{align}\label{eq:phi3}
	\sum _ { b=0}^{q-1 } \# \Gamma_{0,b,q-1}^{(1)} = q t_1 .
	\end{align}
	Therefore we obtain from Equations \eqref{eq:Phi1}, \eqref{eq:phi1}, \eqref{eq:phi2} and \eqref{eq:phi3} that
	$ N(t_1) = N(0 ) + t_1  $, i.e.,   
	\begin{equation}\label{eq:Omet1}
	\# \Omega_{r^*, 0, \cdots, 0, t_1, t_2,\cdots, t_{q-1}, u^* } = \# \Omega_{r^*, 0, \cdots, 0, 0, t_2,\cdots, t_{q-1}, u^* } + t_1 .
	\end{equation}
	Applying Lemma \ref{lem:set_inv2} and Equation \eqref{eq:Omet1} gives that
	\[ \# \Omega_{r^*, 0, \cdots, 0, t_1, t_2,\cdots, t_{q-1}, u^* } = \# \Omega_{r^*, 0, \cdots, 0, 0, 0,\cdots, 0, u^* } + \sum_{\nu=1}^{q-1} t_\nu . \]
	This finishes the proof.  
\end{proof}

\begin{lem}\label{lem:Omeru}
	Set $r^*: = q^3-q$, $u ^{*}:=  q^3+q^2 $. Then
	$ \# \Omega_{r^*,0, \cdots, 0 ,0,\cdots, 0, u^*} =1-g^{(3)} +r^* + u^* $.
\end{lem}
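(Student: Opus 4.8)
The plan is to collapse this $(2q-1)$-variable lattice point set onto the three-variable set $\Omega_{r^*,0,0,u^*}$ from Section~\ref{sec:arith} (the instance of \eqref{eq:O1} with $s=t=0$, $r=r^*$, $u=u^*$), whose cardinality was already computed in Lemma~\ref{lem:Omega_main}; applying that lemma with $t=0$ then finishes the proof.

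First I would use that every ``$s$''- and ``$t$''-parameter is zero. In the description \eqref{eq:Omega2_0} with all $s_\mu=0$, the inequalities $0\leqslant i+(q^2+q)k_\mu<q^2+q$ force $k_\mu=\ceil{\tfrac{-i}{q^2+q}}$ for every $\mu$; hence $k_1=\cdots=k_{q-1}$, and writing $k$ for this common value gives $\sum_{\mu}k_\mu=(q-1)k$. Feeding this into the inequalities with all $t_\nu=0$, namely $0\leqslant qi+(q^2+q)j_\nu-(q+1)\sum_\mu k_\mu<q^2+q$, forces in the same way $j_1=\cdots=j_{q-1}=:j$, so $\sum_\nu j_\nu=(q-1)j$. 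Consequently $(i,j,k)\mapsto(i,j,\cdots,j,k,\cdots,k)$ is a bijection onto $\Omega_{r^*,0,\cdots,0,0,\cdots,0,u^*}$ from the three-variable set
\begin{align*}
\Xi:=\Big\{(i,j,k)\ \Big|\ & -r^*\leqslant i,\quad 0\leqslant i+(q^2+q)k<q^2+q,\\
& 0\leqslant qi+(q^2+q)j-(q^2-1)k<q^2+q,\\
& -u^*\leqslant -q^2 i-(q^3-q)j-(q^2-1)k\Big\},
\end{align*}
where I have used $(q+1)(q-1)=q^2-1$ and $(q^2+q)(q-1)=q^3-q$ to simplify the two sums. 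In particular $\#\Omega_{r^*,0,\cdots,0,0,\cdots,0,u^*}=\#\Xi$.

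Next I would identify $\Xi$ with $\Omega_{r^*,0,0,u^*}$ of \eqref{eq:O1}. The map $(i,j,k)\mapsto(i,j+k,k)$ is a bijection of $\mathbb{Z}^3$; its first two constraints do not involve $j$ and are unchanged, while substituting $j\mapsto j+k$ turns the term $(q^2+q)j+(q+1)k$ in the third constraint of \eqref{eq:O1} into $(q^2+q)j-(q^2-1)k$ (since $(q^2+q)-(q^2-1)=q+1$) and turns $-(q^3-q)j-(q^3+q^2-q-1)k$ in the fourth into $-(q^3-q)j-(q^2-1)k$ (since $(q^3-q)+(q^2-1)=q^3+q^2-q-1$). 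Hence this substitution carries $\Omega_{r^*,0,0,u^*}$ bijectively onto $\Xi$, so $\#\Xi=\#\Omega_{r^*,0,0,u^*}$. Combining this with Lemma~\ref{lem:Omega_main} taken at $t=0$ yields $\#\Omega_{r^*,0,\cdots,0,0,\cdots,0,u^*}=\#\Omega_{r^*,0,0,u^*}=1-g^{(3)}+r^*+u^*$, as claimed.

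There is no genuine obstacle here: all the hard counting — the reduction to two-dimensional sets and the Pick's-theorem computation — was already carried out in Lemma~\ref{lem:Omega_main}, and what remains is only the elementary bookkeeping that the collapse of the $k_\mu$ and $j_\nu$ together with the linear substitution $j\mapsto j+k$ line up the four defining inequalities exactly; the one point requiring a little care is keeping track of the identities $(q+1)(q-1)=q^2-1$, $(q^2+q)(q-1)=q^3-q$ and $(q^3-q)+(q^2-1)=q^3+q^2-q-1$. If a self-contained argument were preferred, $\Xi$ could instead be recounted directly by the same reduction to Lemmas~\ref{lem:Psi_a} and \ref{lem:Phi_b} used in the proof of Lemma~\ref{lem:Omega_main}.
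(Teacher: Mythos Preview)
Your proposal is correct and follows essentially the same approach as the paper: collapse the repeated $k_\mu$ and $j_\nu$ to a single $k$ and $j$, then use the linear change of variable $j'=j-k$ (equivalently your map $(i,j,k)\mapsto(i,j+k,k)$) to identify the resulting three-variable set with $\Omega_{r^*,0,0,u^*}$ from \eqref{eq:O1}, and finish with Lemma~\ref{lem:Omega_main} at $t=0$. One small notational quibble: when you say ``substituting $j\mapsto j+k$'' you really mean expressing the constraints of $\Omega_{r^*,0,0,u^*}$ in the image coordinates, which amounts to replacing $j$ by $j-k$ in those constraints; your computed outcomes and the supporting identities are nonetheless correct.
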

\begin{proof}
	It follows from the definition that
	\begin{align*}
	\Omega_{r^*,0, \cdots, 0 ,0,\cdots, 0, u^*}: = \Big\{ \,&  (i, j_1,\cdots,j_{q-1} , k_1,\cdots,k_{q-1}  )    \, \Big| \, -r^* \leqslant i ,\\
	&\phantom{-} 0 \leqslant  i + (q^2+q) k_\mu < q^2+q  \textup{ for } 1 \leqslant \mu \leqslant q-1,\\
	&\phantom{-} 0 \leqslant  qi + (q^2+q) j_\nu - (q+1) \sum_{\mu = 1}^{q-1} k_\mu <  q^2+q  \textup{ for } 1 \leqslant \nu \leqslant q-1,\\
	&-u^* \leqslant  -q^2 i - (q^2+q) \sum_{\nu = 1}^{q-1} j_\nu -(q+1) \sum_{\mu = 1}^{q-1}  k_\mu   \,\Big\}.
	\end{align*}
	Since $ k_1=k_2=\cdots =k_{q-1} $ and $ j_1=j_2=\cdots =j_{q-1} $,  we rewrite the set $ \Omega_{r^*,0, \cdots, 0 ,0,\cdots, 0, u^*} $ as 
	\begin{align*}
	\Omega_{r^*,0, \cdots, 0 ,0,\cdots, 0, u^*} = \Big\{\, (i, j  , k  )\, \Big|\, -r^* & \leqslant i ,\\
	0 &\leqslant  i + (q^2+q) k < q^2+q  ,\\
	0 &\leqslant  qi + (q^2+q) j  - (q^2-1)  k <  q^2+q  ,\\
	-u^* &\leqslant  -q^2 i - (q^3-q) j -(q^2-1)   k    	\,\Big\}.
	\end{align*}
	Let $ j'= j-k $. Then the set $ \Omega_{r^*,0, \cdots, 0 ,0,\cdots, 0, u^*}  $ becomes
	\begin{align*}
	\Big\{\, (i, j'  , k  ) \,\Big| \, -r^* & \leqslant i ,\\
	0 &\leqslant  i + (q^2+q) k < q^2+q  ,\\
	0 &\leqslant  qi + (q^2+q) j' +(q+1) k <  q^2+q  ,\\
	-u^* &\leqslant  -q^2 i - (q^3-q) j' -(q^3 + q^2-q-1)   k    	\,\Big\},
	\end{align*}
	which is exactly the set $ \Omega_{r^*,\,0,\,0,\,u^*}$ of \eqref{eq:O1}. 
	So we have from Lemma \ref{lem:Omega_main} that $ \# \Omega_{r^*,0, \cdots, 0 ,0,\cdots, 0, u^*} =1-g^{(3)}  +r^* + u^* $.
\end{proof}


The proof of Proposition \ref{prop:Omega2} is given as follows.
\begin{proof}[Proof of Proposition \ref{prop:Omega2}]
	When $ r \geqslant R $, the cardinality of $ \Omega_{r ,s_1,\cdots,s_{q-1},t_1,\cdots,t_{q-1} , u } $ is given as follows: 
	\begin{align*} 
	\# \Omega_{r ,s_1,\cdots,s_{q-1},t_1,\cdots,t_{q-1} , u }&  = \# \Omega_{\widehat r,0 ,\widehat s_2,\cdots,\widehat s_{q-1} ,\widehat t_1,\cdots, \widehat t_{q-1}, \widehat u} \quad \quad \textup{(by Lemmas \ref{lem:set_inv2} and \ref{lem:Omes1})}\\
	& =  \# \Omega_{r^* ,0 ,\widehat s_2,\cdots,\widehat s_{q-1} , \widehat t_1,\cdots, \widehat t_{q-1} , u^* } + (\widehat r -r^* )  +(\widehat u - u^*) \quad \quad \textup{(by Lemma \ref{lem:Omega_large2})} \\
	& =  \# \Omega_{  r^*  ,0,0,\cdots,0, \widehat t_1,\cdots, \widehat t_{q-1} ,   u^*  } + (\widehat r -r^* ) +  \sum_{\nu=2}^{q-1 } \widehat s_\nu  +(\widehat u - u^*) \quad \quad \textup{(by Lemma \ref{lem:s})}  \\			
	& = \# \Omega_{r^*, 0, 0,\cdots,  0, 0,\cdots, 0, u^* } + (\widehat r -r^* ) +  \sum_{\mu=2}^{q-1 } \widehat s_\mu + \sum_{\nu =1 }^{q-1} \widehat t_\nu +(\widehat u - u^*) \quad \quad \textup{(by Lemma \ref{lem:t})}\\
	& = 1- g^{(3)}+ r^* + u^* + (\widehat r -r^* ) + \sum_{\mu=2}^{q-1 } \widehat s_\mu + \sum_{\nu =1 }^{q-1} \widehat t_\nu+(\widehat u - u^*)  \quad \quad \textup{(by Lemma \ref{lem:Omeru})} \\
	& = 1- g^{(3)}+  \widehat r  + \sum_{\mu=2}^{q-1 } \widehat s_\mu + \sum_{\nu =1 }^{q-1} \widehat t_\nu + \widehat u   \\ 
	& = 1- g^{(3)}+   r  + \sum_{\mu=1}^{q-1 }   s_\mu + \sum_{\nu =1 }^{q-1}   t_\nu +  u .  \quad \quad \textup{(by Lemma \ref{lem:Omes1})}  
	\end{align*}
	This finishes the proof of Proposition \ref{prop:Omega2}.	
\end{proof}

\section{Weierstrass semigroups and pure gaps  }\label{sec:Weiersemipureg}
In this section, we calculate the Weierstrass semigroups and the pure gap sets at the places $  Q^{(3)} $,  $ S_{0, \mu }^{(3)} $, $ S_{1, \nu }^{(3)} $, $ P^{(3)} $, for $ 1 \leqslant \mu,\nu \leqslant q-1 $. 

We briefly introduce some preliminary notation and results before we begin. Let $ Q_1,\cdots, Q_k $ be distinct rational places of a function field $ F $. The Weierstrass semigroup
$ H(Q_1,\cdots, Q_k) $ is defined by
\[
\Big\{(s_1,\cdots, s_k)\in \mathbb{N}_0^k\,\,\Big| \,\,\exists f\in F \text{ with } \Div_{\infty}(f)=\sum_{i=1}^k s_i Q_i  \Big\},
\]
and the Weierstrass gap set $  G(Q_1,\cdots, Q_k)  $ is defined by $ \mathbb{N}_0^k \backslash H(Q_1,\cdots, Q_l) $, where $  \Div_{\infty}(f) $ is the pole divisor of $ f $ and $ \mathbb{N}_0= \mathbb{N} \cup \{0\}$ with $ \mathbb{N} $ denotes the set of positive integers. For more details, we refer  \cite{matthews2004weierstrass}.

Homma and Kim \cite{Homma2001Goppa} introduced the concept of pure gap set with respect to a pair of rational places. It was generalized by
Carvalho and Torres \cite{carvalho2005goppa} to several rational places, denoted by $ G_0(Q_1,\cdots, Q_k) $, which is given by
\begin{align*}
\Big\{&(s_1,\cdots,s_k)\in \mathbb{N}^k\,\,\Big| \,\,\ell(G) = \ell(G -Q_j ) \text{ for all }1\leqslant j \leqslant k, \text{ where }G=\sum_{i=1}^k s_iQ_i \Big\}.
\end{align*}
In \cite{carvalho2005goppa}, they also proved that $ (s_1,\cdots,s_k)  $ is a pure gap at $ (Q_1,\cdots, Q_k) $ if and only if
\begin{align*}
\ell(s_1Q_1+\cdots+s_k Q_k)=\ell((s_1-1)Q_1+\cdots+(s_k-1) Q_k).
\end{align*}

There is a useful way to calculate the Weierstrass semigroups, which can be regarded as an easy generalization of Lemma 2.1 due to Kim \cite{kim}.
\begin{lem}[\cite{carvalho2005goppa}, Lemma 2.2]
	\label{lem:Weierstsemi}
	For $ k $ distinct rational places $ Q_1,\cdots,Q_k $, the set $ H(Q_1,\cdots,Q_k) $ is given by
	\begin{align*}
	\Big\{&(s_1,\cdots,s_k)\in \mathbb{N}_0^k~\Big|~\ell(G) \neq \ell(G -Q_j ) \text{ for } 1\leqslant j \leqslant k, \text{ where } G=\sum_{i=1}^k s_iQ_i \Big\}.
	\end{align*}

\end{lem}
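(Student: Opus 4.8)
Throughout write $G=\sum_{i=1}^k s_iQ_i$ and recall $\ell(G)-\ell(G-Q_j)\in\{0,1\}$, so the stated condition $\ell(G)\neq\ell(G-Q_j)$ is equivalent to $\mathcal{L}(G-Q_j)\subsetneq\mathcal{L}(G)$, i.e.\ to the existence of some $f\in\mathcal{L}(G)$ with $v_{Q_j}(f)=-s_j$. The plan is to verify the two inclusions of the asserted set equality separately.

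For the inclusion $\subseteq$ I would start from $(s_1,\dots,s_k)\in H(Q_1,\dots,Q_k)$ and a function $f\in F$ with $\Div_\infty(f)=G$. If $s_j\geq 1$ then $v_{Q_j}(f)=-s_j$, so $f\in\mathcal{L}(G)\setminus\mathcal{L}(G-Q_j)$ directly. If $s_j=0$, then $G\geq 0$ gives $K\subseteq\mathcal{L}(G)$, and choosing $a\in K$ with $a\neq -f(Q_j)$ yields $f+a\in\mathcal{L}(G)$ with the same pole divisor and $v_{Q_j}(f+a)=0$, whence $f+a\in\mathcal{L}(G)\setminus\mathcal{L}(G-Q_j)$. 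In either case $\ell(G)\neq\ell(G-Q_j)$.

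For the inclusion $\supseteq$ I would assume $\mathcal{L}(G-Q_j)\subsetneq\mathcal{L}(G)$ for all $j$, fix a local parameter $t_j$ at each $Q_j$, and consider the $K$-linear form $\lambda_j\colon\mathcal{L}(G)\to K$ returning the coefficient of $t_j^{-s_j}$ in the local expansion; then $W_j:=\ker\lambda_j=\mathcal{L}(G-Q_j)$ is a proper $K$-subspace of $V:=\mathcal{L}(G)$. Any $f\in V\setminus\bigcup_{j=1}^k W_j$ has $v_{Q_j}(f)=-s_j$ for every $j$, and since the poles of $f\in\mathcal{L}(G)$ all lie among $Q_1,\dots,Q_k$ this forces $\Div_\infty(f)=\sum_i s_iQ_i=G$, i.e.\ $(s_1,\dots,s_k)\in H(Q_1,\dots,Q_k)$. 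Such an $f$ exists because $V$ cannot equal $\bigcup_{j=1}^k W_j$: this is automatic if $\dim V\leq 1$, and for $\dim V\geq 2$ it holds whenever $k\leq|K|$, since a vector space over a field of cardinality $|K|$ needs at least $|K|+1$ proper subspaces to be covered. In every instance used in this paper one has $k\leq 2q\leq q^2=|K|$, so this hypothesis is met.

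The only step I expect to be a genuine obstacle is this covering fact, which fails over very small fields, so a relation such as $k\leq|K|$ cannot be avoided; I would record it as a separate lemma, proved by the crude bound $\bigl|\bigcup_j W_j\bigr|\leq k\,|K|^{\dim V-1}<|K|^{\dim V}$ for $k<|K|$ together with the standard treatment of the borderline case $k=|K|$ (a minimal cover being $|K|+1$ hyperplanes through a common codimension-two subspace). Everything else reduces to routine manipulation of valuations and of Riemann--Roch spaces.
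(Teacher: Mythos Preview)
The paper does not supply its own proof of this lemma: it is quoted verbatim from Carvalho--Torres \cite{carvalho2005goppa} (Lemma~2.2) and used as a black box. So there is no in-paper argument to compare your proposal against.

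That said, your proof is essentially the standard one and is correct. A small simplification in the $\subseteq$ direction: when $s_j=0$ you can simply take the constant function $1\in\mathcal{L}(G)\setminus\mathcal{L}(G-Q_j)$ rather than adjusting $f$ by a constant. Your identification of the covering obstruction is accurate: the argument that $V\neq\bigcup_{j=1}^{k}W_j$ genuinely requires $k\leq|K|$, and this is exactly the hypothesis under which the Carvalho--Torres lemma is usually stated. Your observation that every use in this paper has $k\leq 2q\leq q^2=|K|$ is correct and suffices for all applications here.
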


For the purpose of calculating Weierstrass semigroups and pure gaps, we require auxiliary results which are described below.

\begin{lem}\label{lem:omegawith1}
	The following assertions hold.
	\begin{enumerate}
		\item
		$ \#\Omega_{s_0,s_1,\cdots, s_{q-1}, t_1,\cdots, t_{q-1},t_0}  = \#	\Omega_{s_0-1,s_1,\cdots, s_{q-1}, t_1,\cdots, t_{q-1}, t_0}+1  $ if and only if
		\[   \sum\limits_{\nu=0 }^{q-1} \ceil{\dfrac{qs_0-t_{\nu}}{q^2+q} +\dfrac{1}{q} \sum\limits_{\mu=1 }^{q-1} \ceil{\dfrac{s_0-s_{\mu}}{q^2+q} } } 
		\leqslant s_0  .\]
		\item
		$ \#\Omega_{s_0,s_1,\cdots, s_{q-1}, t_1,\cdots, t_{q-1}, t_0}  = \#	\Omega_{s_0,s_1,\cdots, s_{q-1}, t_1,\cdots, t_{q-1}, t_0-1}+1   $ if and only if
		\[   \sum_{\mu=0 }^{q-1} \ceil{\dfrac{qt_0-s_{\mu}}{q^2+q} +\dfrac{1}{q} \sum_{\nu=1 }^{q-1} \ceil{\dfrac{t_0-t_{\nu}}{q^2+q} } }  
		\leqslant   t_0    .\]
	\end{enumerate}
\end{lem}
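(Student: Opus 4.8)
The plan is to use the equivalent description \eqref{eq:Omega2} of $\Omega_{s_0,s_1,\cdots,s_{q-1},t_1,\cdots,t_{q-1},t_0}$, in which a lattice point is completely determined by its first coordinate $i$: the entries $k_\mu$ and $j_\nu$ are prescribed ceilings built from $i$, and the only effective restrictions are the bound $i\geqslant -s_0$ together with the final inequality \eqref{eq:Omi_t}. Consequently the set obtained by replacing $s_0$ with $s_0-1$ is contained in $\Omega_{s_0,s_1,\cdots,t_0}$, and the two differ by either nothing or exactly the single tuple attached to $i=-s_0$, which belongs to $\Omega_{s_0,s_1,\cdots,t_0}$ if and only if that tuple satisfies \eqref{eq:Omi_t}. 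Hence $\#\Omega_{s_0,s_1,\cdots,t_0}=\#\Omega_{s_0-1,s_1,\cdots,t_0}+1$ holds precisely when \eqref{eq:Omi_t} is valid at $i=-s_0$, and part (1) reduces to rewriting this one inequality in closed form.

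For the rewriting I would repeat the manipulation from the proof of Lemma \ref{lem:set_inv2}: introduce the integer $j_0$ and the remainder $\eta$, with $0\leqslant\eta<q^2+q$, defined by $-t_0=qi+(q^2+q)j_0-(q+1)\sum_{\mu=1}^{q-1}k_\mu-\eta$, so that $j_0=\ceil{\frac{-qi-t_0}{q^2+q}+\frac1q\sum_{\mu=1}^{q-1}k_\mu}$ is exactly the $\nu=0$ instance of the formula for $j_\nu$ with $t_0=u$ in place of $t_\nu$. Substituting this expression for $-t_0$ into \eqref{eq:Omi_t} and cancelling the common summand $-(q+1)\sum_{\mu}k_\mu$ collapses the inequality to $(q^2+q)\bigl(i+\sum_{\nu=0}^{q-1}j_\nu\bigr)\leqslant\eta$; since the left side is a multiple of $q^2+q$ while $0\leqslant\eta<q^2+q$, this is equivalent to $i+\sum_{\nu=0}^{q-1}j_\nu\leqslant 0$, i.e. $\sum_{\nu=0}^{q-1}j_\nu\leqslant -i$. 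Setting $i=-s_0$ and inserting $k_\mu=\ceil{\frac{s_0-s_\mu}{q^2+q}}$ and $j_\nu=\ceil{\frac{qs_0-t_\nu}{q^2+q}+\frac1q\sum_{\mu=1}^{q-1}k_\mu}$ yields precisely the inequality in part (1).

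Part (2) I would deduce from part (1) using the symmetry in Lemma \ref{lem:set_inv2}, namely $\#\Omega_{s_0,s_1,\cdots,s_{q-1},t_1,\cdots,t_{q-1},t_0}=\#\Omega_{t_0,t_1,\cdots,t_{q-1},s_1,\cdots,s_{q-1},s_0}$, under which lowering $t_0$ by one on the left corresponds to lowering the leading index by one on the right. Applying part (1) to the swapped set, with the two blocks of parameters $(s_0,s_1,\cdots,s_{q-1})$ and $(t_0,t_1,\cdots,t_{q-1})$ interchanged, and then renaming dummy summation indices, produces exactly the criterion of part (2).

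The only step needing care is the bookkeeping in the middle: one must recognise that the term coming from the final inequality is exactly the $j_0$ needed to complete $\sum_{\nu=1}^{q-1}j_\nu$ to $\sum_{\nu=0}^{q-1}j_\nu$, and keep the ceiling identity $\frac{(q+1)\sum_{\mu}k_\mu}{q^2+q}=\frac1q\sum_{\mu}k_\mu$ straight. Both are exactly the computations already carried out in the proof of Lemma \ref{lem:set_inv2}, so no genuinely new obstacle appears.
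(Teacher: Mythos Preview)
Your argument is correct and is essentially the paper's own proof: the paper takes the complementary set $\Phi$ at $i=-s_0$ using the reformulated description \eqref{eq:Ome_t2} (which already incorporates the $j_0$, $\eta$ manipulation you reproduce), and observes that $\Phi\neq\emptyset$ iff $\sum_{\nu=0}^{q-1}j_\nu\leqslant s_0$. Your deduction of part~(2) from part~(1) via the symmetry of Lemma~\ref{lem:set_inv2} is a clean way to make explicit what the paper leaves as ``similarly proved.''
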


\begin{proof}
	We only prove the first assertion. 
	Consider two lattice point sets $ \Omega_{s_0,s_1,\cdots, s_{q-1}, t_1,\cdots, t_{q-1},t_0} $  and $	\Omega_{s_0-1,s_1,\cdots, s_{q-1}, t_1,\cdots, t_{q-1}, t_0} $, which are given in Equation \eqref{eq:Ome_t2}. We obtain the complementary set $ \Phi $ of $ \Omega_{s_0-1,s_1,\cdots, s_{q-1}, t_1,\cdots, t_{q-1},t_0} $ in $ \Omega_{s_0,s_1,\cdots, s_{q-1}, t_1,\cdots, t_{q-1}, t_0} $ as follows:
	\begin{align*}
	\Phi:= \Big\{ \, &  (i,j_0, j_1,\cdots,j_{q-1} , k_1,\cdots,k_{q-1} )   \, \Big|\,   i= -s_0  ,  \\
	&  k_\mu = \ceil{ \dfrac{-i-s_{\mu}}{q^2+q}} \textup{ for } \mu =1, \cdots, q-1,   \\
	&j_\nu= \ceil{\dfrac{-qi-t_{\nu}}{q^2+q} +\dfrac{1}{q} \sum_{\mu=1 }^{q-1} k_\mu } \textup{ for } \nu =0, \cdots, q-1,   \\
	&i+\sum_{\nu=0}^{q-1} j_{\nu} \leqslant 0  \,\Big\}.  
	\end{align*}
	It follows immediately that the set $ \Phi $ is not empty if and only if the inequality 
	$  \sum_{\nu = 0}^{q-1} j_\nu   \leqslant  s_0    $ holds, which concludes the first assertion.
\end{proof}

The main results of this section are given below dealing with the Weierstrass semigroups and the pure gap sets. For simplicity, we write $ \mathcal{Q}_1  := Q^{(3)} $,
$  \mathcal{P}_1  :=P^{(3)} $ and $ \mathcal{Q}_{\mu+1}  := S_{0, \mu }^{(3)} $, $ \mathcal{P}_{\nu+1}  := S_{1, \nu }^{(3)} $ for $ 1\leqslant \mu, \nu \leqslant q-1 $.

\begin{thm}
	\label{lem:Weierstsemi2}
	Let $ 1\leqslant k,l \leqslant q $.
	Define 
	\begin{align*}
	S_j(l,k)= &  \sum_{\nu=1   }^{l} \ceil{\dfrac{q s_j -t_\nu}{q^2+q}+\frac{1}{q}\sum_{\mu=1 \atop \mu \neq j}^{k}\ceil{ \dfrac{s_j-s_{\mu}}{q^2+q}}+\frac{q-k}{q} \ceil{ \dfrac{s_j }{q^2+q}}}
	\\ 
	&+(q-l) \ceil{ \dfrac{s_j}{q+1}+\frac{1}{q}\sum_{\mu=1 \atop \mu \neq j}^{k}\ceil{ \dfrac{s_j-s_{\mu}}{q^2+q} } +\frac{q-k}{q} \ceil{ \dfrac{s_j }{q^2+q}}},\\
	T_i(k,l)=	&  \sum_{\mu=1  }^{k} \ceil{\dfrac{q t_i -s_\mu}{q^2+q}+\frac{1}{q}\sum_{\nu=1 \atop \nu\neq i}^{l}\ceil{ \dfrac{t_i-t_{\nu}}{q^2+q}}+\frac{q-l}{q} \ceil{ \dfrac{t_i }{q^2+q}}}
   \\ 
	&+(q-k) \ceil{ \dfrac{t_i}{q+1}+\frac{1}{q}\sum_{\nu=1 \atop \nu\neq i}^{l}\ceil{ \dfrac{t_i-t_{\nu}}{q^2+q} } +\frac{q- l}{q} \ceil{ \dfrac{t_i }{q^2+q}}}  .
	 \end{align*}The Weierstrass semigroups and the pure gap sets are given as follows. 
	\begin{enumerate}
		\item The Weierstrass semigroup $ H(  \mathcal{Q}_1  ,\cdots, \mathcal{Q}_k  \,)$ is given by
		\begin{align*}
		\Big\{\,&(s_1,\cdots,s_k)\in \mathbb{N}_0^k~\Big |
		~ \ceil{\dfrac{s_j}{q+1}+\frac{1}{q}\sum_{\mu=1 \atop \mu\neq j}^{k}\ceil{ \dfrac{s_j-s_\mu}{q^2+q}}+\frac{q-k}{q}\ceil{ \dfrac{s_j}{q^2+q}}}
		 \leqslant \dfrac{s_j }{q}
		\textup{ for all } 1\leqslant j \leqslant k \Big\}.
		\end{align*}
		\item The Weierstrass semigroup 	
		$	  H(  \mathcal{Q}_1  ,\cdots, \mathcal{Q}_k , \mathcal{P}_1 ,\cdots, \mathcal{P}_l) $ is given by
		\begin{align*}
		\Big\{\,(s_1,\cdots,s_k, t_1,\cdots,t_l)\in \mathbb{N}_0^{k+l}~\Big |\,& 
      	 S_j(l,k) \leqslant s_j   
      	 \textup{ for all } 1\leqslant j \leqslant k ,\\
      	 &   T_i(k,l) \leqslant t_i  
      	  \textup{ for all } 1\leqslant i \leqslant l        	   				
\,\,\Big\}.
		\end{align*}		
		\item The pure gap set $ G_0(  \mathcal{Q}_1  ,\cdots, \mathcal{Q}_k  \,)$ is given by
		\begin{align*}
		\Big\{\,&(s_1,\cdots,s_k)\in \mathbb{N}^k~\Big |
		~\ceil{\dfrac{s_j}{q+1}+\frac{1}{q}\sum_{ \mu=1 \atop \mu\neq j}^{k}\ceil{ \dfrac{s_j-s_{\mu}}{q^2+q}}+\frac{q-k}{q}\ceil{ \dfrac{s_j}{q^2+q}}}
		 > \dfrac{s_j }{q}
		\textup{ for all } 1\leqslant j \leqslant k \Big\}.
		\end{align*}
		\item The pure gap set 	
		$	  G_0(  \mathcal{Q}_1  ,\cdots, \mathcal{Q}_k , \mathcal{P}_1 ,\cdots, \mathcal{P}_l) $ is given by
		\begin{align*}
		\Big\{\,(s_1,\cdots,s_k, t_1,\cdots,t_l)\in \mathbb{N}^{k+l}~\Big |\,& 
      	 S_j(l,k) > s_j   
      	 \textup{ for all } 1\leqslant j \leqslant k ,\\
      	 &   T_i(k,l) > t_i 
      	 \textup{ for all } 1\leqslant i \leqslant l  
		\,\,\Big\}.
		\end{align*}	
	\end{enumerate}
\end{thm}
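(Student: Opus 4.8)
The plan is to reduce all four assertions to the increment criterion of Lemma~\ref{lem:omegawith1}, using three ingredients: the identity $\ell(G)=\#\Omega_{r,s_1,\dots,s_{q-1},t_1,\dots,t_{q-1},u}$ for every divisor $G=rQ^{(3)}+\sum_\mu s_\mu S_{0,\mu}^{(3)}+\sum_\nu t_\nu S_{1,\nu}^{(3)}+uP^{(3)}$ (Proposition~\ref{prop:basis2}); the permutation symmetry $\#\Omega_{s_0,\dots,t_0}=\#\Omega_{s_0',\dots,t_0'}$ of Lemma~\ref{lem:set_inv2}; and the fact that for a place of degree one the increment $\ell(D)-\ell(D-P)$ is always $0$ or $1$. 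Concretely, with $\mathcal{Q}_1=Q^{(3)}$, $\mathcal{Q}_{\mu+1}=S_{0,\mu}^{(3)}$, $\mathcal{P}_1=P^{(3)}$, $\mathcal{P}_{\nu+1}=S_{1,\nu}^{(3)}$ and $G=\sum_{j=1}^k s_j\mathcal{Q}_j+\sum_{i=1}^l t_i\mathcal{P}_i$, this $G$ has the standard form with $r=s_1$, with $S_0$-coefficients $s_2,\dots,s_k$ padded by $q-k$ zeros, with $u=t_1$, and with $S_1$-coefficients $t_2,\dots,t_l$ padded by $q-l$ zeros. By Lemma~\ref{lem:Weierstsemi}, the tuple lies in the Weierstrass semigroup exactly when $\ell(G)=\ell(G-\mathcal{Q}_j)+1$ for all $j$ and $\ell(G)=\ell(G-\mathcal{P}_i)+1$ for all $i$; by the pure-gap criterion of \cite{carvalho2005goppa}, it lies in the pure-gap set exactly when all of these increments are $0$. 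So everything reduces to deciding, for each place, whether $\#\Omega$ drops when the corresponding coordinate is lowered by one.

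\textbf{Handling one coordinate at a time.} Fix $j$ and consider $\mathcal{Q}_j$. Lowering its coefficient lowers the coordinate of $(r,s_1,\dots,s_{q-1})$ attached to $\mathcal{Q}_j$; by the $s$-permutation invariance of Lemma~\ref{lem:set_inv2} I move that coordinate into the $s_0$ slot, so that Lemma~\ref{lem:omegawith1}(1) applies verbatim. In the permuted tuple the $s_0$ slot holds $s_j$, the remaining $q-1$ entries among the $s$-coordinates are $\{s_1,\dots,s_k\}\setminus\{s_j\}$ together with $q-k$ copies of $0$, and the $q$ coordinates $t_0,\dots,t_{q-1}$ are $\{t_1,\dots,t_l\}$ together with $q-l$ copies of $0$. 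Substituting into the inequality of Lemma~\ref{lem:omegawith1}(1): the inner quantity $\tfrac1q\sum_{\mu=1}^{q-1}\ceil{(s_j-s_\mu)/(q^2+q)}$ becomes $\tfrac1q\sum_{\mu\ne j}\ceil{(s_j-s_\mu)/(q^2+q)}+\tfrac{q-k}{q}\ceil{s_j/(q^2+q)}$, which is independent of $\nu$; the outer sum over $\nu$ then breaks into $l$ terms indexed by $t_1,\dots,t_l$ plus $q-l$ identical terms from the padding zeros, and one reads off precisely $S_j(l,k)\leqslant s_j$. The place $\mathcal{P}_i$ is treated symmetrically, now using the $t$-permutation invariance to bring $t_i$ into the $t_0$ slot and applying Lemma~\ref{lem:omegawith1}(2); collecting the $q-k$ zeros among the $s$-coordinates and the $q-l$ zeros among the remaining $t$-coordinates gives $T_i(k,l)\leqslant t_i$. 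This yields (2), and (4) follows by negating each inequality. Statements (1) and (3) are the case $l=0$: there are no places $\mathcal{P}_i$, so the outer sum is $q$ copies of one ceiling, $S_j(0,k)=q\,\ceil{s_j/(q+1)+\tfrac1q\sum_{\mu\ne j}\ceil{(s_j-s_\mu)/(q^2+q)}+\tfrac{q-k}{q}\ceil{s_j/(q^2+q)}}$, and $S_j(0,k)\leqslant s_j$ is the asserted condition $\ceil{\cdots}\leqslant s_j/q$, with (3) the negation.

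\textbf{Where the work is.} The only genuine effort is the bookkeeping above: tracking which entries of $\Omega$ are the ``named'' coefficients $s_1,\dots,s_k,t_1,\dots,t_l$ and which are the padding zeros, and checking that after the permutation the nested sums of Lemma~\ref{lem:omegawith1} collapse to the displayed closed forms $S_j(l,k)$, $T_i(k,l)$ — in particular that $qs_j/(q^2+q)=s_j/(q+1)$ produces the ``$s_j/(q+1)$'' term. A minor point is that, for the semigroup, a tuple may have $s_j=0$, so $G-\mathcal{Q}_j$ carries a coefficient $-1$; this is harmless, because after the permutation of Lemma~\ref{lem:set_inv2} that coordinate sits in the $Q^{(3)}$ slot, where the basis description of Proposition~\ref{prop:basis2} still applies for an arbitrary integer coefficient at $Q^{(3)}$, by the general-case argument already used in the proof of Proposition~\ref{prop:basis}. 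With these checks in place, each of the four descriptions follows by direct substitution.
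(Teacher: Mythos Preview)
Your argument is correct and follows essentially the same route as the paper's proof: pad the coefficient tuple with zeros to length $q$ on each side, invoke Proposition~\ref{prop:basis2} to identify $\ell(G)$ with $\#\Omega$, use the permutation invariance of Lemma~\ref{lem:set_inv2} to move the coordinate under inspection into the $s_0$ (resp.\ $t_0$) slot, and then read off the increment condition from Lemma~\ref{lem:omegawith1}, splitting the resulting sums into the named terms and the zero-padding terms to obtain $S_j(l,k)$ and $T_i(k,l)$. The paper's proof is simply a terse version of this, citing the same four results and writing down the intermediate inequality $\sum_{\nu=1}^{q}\lceil\cdots\rceil\leqslant s_j$ before collecting the zero terms; your write-up spells out the bookkeeping (and the harmless $s_j=0$ edge case) that the paper leaves implicit.
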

\begin{proof}
	 We only show the second assertion in details since the others are similarly proved. Let us consider a special divisor $  G = \sum_{\mu=1}^{q} s_{\mu} \mathcal{Q}_{\mu}  + \sum_{\nu=1}^{q}  t_{\nu} \mathcal{P}_{\nu}   $, where $ s_{k+1}=\cdots=s_q=0 $ and $ t_{l+1}=\cdots=t_q=0 $. From Proposition \ref{prop:basis2}, Lemmas \ref{lem:set_inv2}, \ref{lem:Weierstsemi} and \ref{lem:omegawith1}, we know that $ (s_1,\cdots,s_k, t_1,\cdots,t_l)\in \mathbb{N}_0^{k+l} $ is in $	  H(  \mathcal{Q}_1  ,\cdots, \mathcal{Q}_k , \mathcal{P}_1 ,\cdots, \mathcal{P}_l) $ if and only if 
		\[   \sum\limits_{\nu=1 }^{q } \ceil{\dfrac{qs_j-t_{\nu}}{q^2+q} +\dfrac{1}{q} \sum\limits_{\mu=1  \atop \mu\neq j }^{q} \ceil{\dfrac{s_j-s_{\mu}}{q^2+q} } } 
		\leqslant s_j  \]
	 for all $ 1\leqslant j \leqslant k $, and 
	\[   \sum_{\mu=1 }^{q } \ceil{\dfrac{qt_i-s_{\mu}}{q^2+q} +\dfrac{1}{q} \sum_{\nu=1 \atop \nu\neq i }^{q } \ceil{\dfrac{t_i-t_{\nu}}{q^2+q} } }  
	\leqslant   t_i    \]
	for all $ 1\leqslant i \leqslant l$. The desired conclusion then follows.
\end{proof}

\begin{cor}\label{cor:onepoint}
	With notation as before, we have the following assertions.
	\begin{enumerate}
		\item The Weierstrass semigroup $ H(  \mathcal{Q}_1   )$ is given by
		\begin{align*}
		\Big\{\alpha\in \mathbb{N}_0 ~\Big |
		~  \ceil{\dfrac{\alpha}{q+1}+\frac{q-1}{q}\ceil{ \dfrac{\alpha}{q^2+q}}}   \leqslant \dfrac{ \alpha  }{q }\Big\}.
		\end{align*}
		\item The pure gap set $ G(  \mathcal{Q}_1   )$ is given by
		\begin{align*}
		\Big\{\,(q^2+q)m+r\in \mathbb{N} ~\Big | \, 1\leqslant r < q^2+q,\,\, m \geqslant 0,\,\,
	 \ceil{\dfrac{r}{q+1}-\frac{m+1}{q} }   >  \dfrac{  r }{q } -1\,\, \Big\}.
		\end{align*}
		\item The Weierstrass semigroup $ H(  \mathcal{Q}_1 ,\mathcal{P}_1  )$ is given by
		\begin{align*}
		\Big\{(\alpha,\beta)\in \mathbb{N}_0^2 ~\Big |\,
		&\ceil{\frac{q \alpha-\beta}{q^2+q} +\frac{q-1}{q}\ceil{\frac{\alpha}{q^2+q} }}  
		+ (q-1)\ceil{\frac{  \alpha }{q+1} +\frac{q-1}{q}\ceil{\frac{\alpha}{q^2+q} }}  \leqslant \alpha,
		\\
		& \ceil{\frac{q \beta-\alpha}{q^2+q} +\frac{q-1}{q}\ceil{\frac{\beta}{q^2+q} }}  
		+ (q-1)\ceil{\frac{ \beta}{q+1} +\frac{q-1}{q}\ceil{\frac{\beta}{q^2+q} }}  \leqslant \beta
		\,\,\Big\}.
		\end{align*}	
		\item The pure gap set $ G_0(  \mathcal{Q}_1 ,\mathcal{P}_1  )$ is given by
		\begin{align*}
		\Big\{\,\big((q^2+q)m_1+r_1,&\,(q^2+q)m_2+r_2\big)\in \mathbb{N}^2 ~\Big | \, 1\leqslant r_1,r_2 < q^2+q,\,\, m_1,m_2 \geqslant 0,\,\,\\
		&
		 \ceil{\dfrac{qr_1-r_2}{q^2+q}-\frac{m_1+1}{q} }  +(q-1)\ceil{\dfrac{r_1 }{q+1}-\frac{m_1+1}{q} }  > m_2+r_1-q\,\\
		&
	\ceil{\dfrac{qr_2-r_1}{q^2+q}-\frac{m_2+1}{q} }  +(q-1)\ceil{\dfrac{r_2 }{q+1}-\frac{m_2+1}{q} }  > m_1+r_2-q\, \,\,			 \Big\}.
		\end{align*}		
	\end{enumerate}	
\end{cor}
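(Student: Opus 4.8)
The plan is to read off each of the four assertions from Theorem~\ref{lem:Weierstsemi2} by plugging in the relevant small values of $k$ and $l$, and then to massage the resulting ceiling inequalities into the displayed form. Items~(1) and~(3) are immediate. Taking $k=1$ in part~(1) of Theorem~\ref{lem:Weierstsemi2}, the sum over $\mu\neq 1$ is empty and the membership condition for $H(\mathcal{Q}_1)$ becomes $\ceil{\frac{\alpha}{q+1}+\frac{q-1}{q}\ceil{\frac{\alpha}{q^2+q}}}\leqslant\frac{\alpha}{q}$ with $\alpha=s_1$; this is item~(1). Taking $k=l=1$ in part~(2), both inner sums in $S_1(1,1)$ and $T_1(1,1)$ are empty, and the conditions $S_1(1,1)\leqslant s_1$, $T_1(1,1)\leqslant t_1$ are literally the two inequalities of item~(3) once we write $(s_1,t_1)=(\alpha,\beta)$.

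For items~(2) and~(4) the extra step is an elementary change of variables. If $\alpha=(q^2+q)m+r$ with $1\leqslant r<q^2+q$ and $m\geqslant 0$, then $\ceil{\alpha/(q^2+q)}=m+1$, and applying $\ceil{n+x}=n+\ceil{x}$ for $n\in\mathbb{Z}$ twice yields
\[
\ceil{\frac{\alpha}{q+1}+\frac{q-1}{q}\ceil{\frac{\alpha}{q^2+q}}}=(q+1)m+1+\ceil{\frac{r}{q+1}-\frac{m+1}{q}}.
\]
For item~(2) I would take the complement of the semigroup in item~(1) --- for a single place the Weierstrass gap set and the pure gap set agree, so one may equally start from part~(3) of Theorem~\ref{lem:Weierstsemi2} --- substitute the identity above, and use $\alpha/q=(q+1)m+\tfrac{r}{q}$; the summand $(q+1)m$ cancels and what remains is $\ceil{\frac{r}{q+1}-\frac{m+1}{q}}>\frac{r}{q}-1$, which is item~(2). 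For item~(4) I would run the same substitution with $s_1=(q^2+q)m_1+r_1$, $t_1=(q^2+q)m_2+r_2$ inside the expressions for $S_1(1,1)$ and $T_1(1,1)$ from part~(4), expand, and collect the integer parts; the bulky term $(q^2+q)m_1$ cancels against $s_1$ and one is left with
\[
\ceil{\frac{qr_1-r_2}{q^2+q}-\frac{m_1+1}{q}}+(q-1)\ceil{\frac{r_1}{q+1}-\frac{m_1+1}{q}}>m_2+r_1-q,
\]
together with its mirror image under $(m_1,r_1)\leftrightarrow(m_2,r_2)$, which is exactly item~(4).

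None of this is deep; the only thing to watch is the bookkeeping. Each time $\ceil{n+x}=n+\ceil{x}$ is invoked one must check that the extracted $n$ really is an integer, and it is the constraint $1\leqslant r<q^2+q$ --- which pins $\ceil{\alpha/(q^2+q)}$ to $m+1$ with no ambiguity --- together with the precise coefficients $\tfrac{q-1}{q}$ and $\tfrac{1}{q+1}$ that makes the integer and fractional pieces split off cleanly. The most error-prone case is item~(4), where two independent pairs $(m_i,r_i)$ occur and the cross term $\frac{qr_1-r_2}{q^2+q}$ has to be handled carefully; but no idea beyond Theorem~\ref{lem:Weierstsemi2} is required.
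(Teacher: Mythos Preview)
Your approach matches the paper's almost exactly: items~(1) and~(3) are read off directly from Theorem~\ref{lem:Weierstsemi2}, and items~(2) and~(4) come from the same substitution $\alpha=(q^2+q)m+r$ followed by the ceiling identity $\ceil{n+x}=n+\ceil{x}$.

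There is one small omission. You parametrise $\alpha$ by $(q^2+q)m+r$ with $1\leqslant r<q^2+q$ from the outset, but this range does not cover the positive multiples of $q^2+q$. To conclude that the gap set is \emph{exactly} the displayed set you must also check that no $\alpha$ with $r=0$ is a gap. The paper handles this by first allowing $0\leqslant r<q^2+q$, rewriting the gap inequality as
\[
\ceil{\dfrac{r}{q+1}-\dfrac{m}{q}-\dfrac{1}{q}\ceil{\dfrac{r}{q^2+q}}}+\ceil{\dfrac{r}{q^2+q}}>\dfrac{r}{q},
\]
and observing that for $r=0$ this reads $\ceil{-m/q}>0$, impossible for $m\geqslant 0$; hence $1\leqslant r<q^2+q$ is forced and $\ceil{r/(q^2+q)}=1$. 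The same check is needed (twice) in item~(4) to rule out $r_1=0$ or $r_2=0$. Once you add this, your argument is complete and coincides with the paper's.
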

\begin{proof}
	The first assertion and the third assertion follow immediately from Theorem \ref{lem:Weierstsemi2}. For the second assertion, we note from Theorem \ref{lem:Weierstsemi2} that   
	\begin{align*}
	G(  \mathcal{Q}_1   )=\Big\{\alpha\in \mathbb{N} ~\Big |
	~  \ceil{\dfrac{\alpha}{q+1}+\frac{q-1}{q}\ceil{ \dfrac{\alpha}{q^2+q}}}  > \dfrac{  \alpha  }{q } \Big\}.
	\end{align*}
	By taking $ \alpha= (q^2+q)m+r $ for $ m\geqslant 0 $ and $ 0\leqslant r < q^2+q $, the inequality in $ 	G(  \mathcal{Q}_1   ) $ becomes
	\begin{align}\label{eq:rm}
	 \ceil{\dfrac{r}{q+1}-\frac{m}{q}-\frac{1}{q}\ceil{ \dfrac{r}{q^2+q}}} +  \ceil{ \dfrac{r}{q^2+q}} >   \dfrac{  r  }{q }  .
	\end{align}	
	If $ r=0 $, then it follows from \eqref{eq:rm} that
	\begin{align*} 
	  \ceil{ -\frac{m}{q} }  > 0  ,
	\end{align*}
	contradicting the fact $ m\geqslant 0 $. This means that $ 1\leqslant r < q^2+q $ and so $ \ceil{ \dfrac{r}{q^2+q}}=1 $. This gives the desired assertion as required.		
	The fourth assertion is similarly proved. 
\end{proof}

Several examples for pure gaps are showed in the following.
\begin{example}
	Let $ q=5 $. From Corollary \ref{cor:onepoint} there are $ 116 $ pure gaps in the set $ G(\mathcal{Q}_1 ) $ given by
	\begin{align*}
	\left\{
	\begin{array}{ll}
	 	\,1,\,2,\,3,\,4,\,5,\,6,\,7,\,8,\,9,\,10,
	\,11,\,12,\,13,\,14,\,15,\,16,\,17,\,18,\,19,\,20,\,21,\,22,\,23,\,24,\\
	 \,26,\,27,\,28,\,29,\,31,\,32,
	\,33,\,34,\,35,\,36,\,37,\,38,\,39,\,40,\,41,\,42,\,43,\,44,\,45,\,46,\,47,\\
	 \,48,\,49,\,51,\,52,\,53,
	\,54,\,57,\,58,\,59,\,61,\,62,\,63,\,64,\,65,\,66,\,67,\,68,\,69,\,70,\,71,\,72,\\
	 \,73,\,74,\,76,\,77,
	\,78,\,79,\,82,\,83,\,84,\,88,\,89,\,91,\,92,\,93,\,94,\,95,\,96,\,97,\,98,\,99,\\ 
	\,101,\,102,\,103,
	\,104,\,107,\,108,\,109,\,113,\,114,\,119,\,121,\,122,\,123,\,124,\,127,\\ 
	\,128,\,129,\,133,\,134,
	\,139,\,152,\,153,\,154,\,158,\,159,\,164,\,183,
	\,184,\,189,\,214\, 	 
	 \end{array}	
	\right\}.
	\end{align*} 	
\end{example}
\begin{example}
	Let $ q=3 $. The $ 167 $ pure gaps are in the set $ G_0(\mathcal{Q}_1 ,Q_2 ) $ given below:
	\begin{align*}
	A\cup \widehat{A} \cup \Big\{\,  ( 1,1 ), 
	( 2,2 ), 
	( 3,3 ), 
	( 4,4 ), 
	( 5,5 ), 
	( 7,7 ), 
	( 8,8 ), 
	( 11,11 ), 
	( 14,14 ) \, \Big\},
	\end{align*}
	where 
	\begin{align*}
	A := \left\{
	\begin{array}{ll}
	 ( 1,2 ), 
	( 1,3 ), 
	( 1,4 ), 
	( 1,5 ), 
	( 1,6 ), 
	( 1,7 ), 
	( 1,8 ), 
	( 1,10 ), 
	( 1,11 ), 
	( 1,13 ), 
	( 1,14 ), 
	( 1,15 ), \\ 
	( 1,16 ), 
	( 1,17 ), 
	( 1,19 ), 
	( 1,20 ), 
	( 1,23 ), 
	( 2,3 ), 
	( 2,4 ), 
	( 2,5 ), 
	( 2,6 ), 
	( 2,7 ), 
	( 2,8 ), 
	( 2,10 ),\\  
	( 2,11 ), 
	( 2,13 ),
	( 2,14 ), 
	( 2,15 ), 
	( 2,16 ), 
	( 2,17 ), 
	( 2,19 ), 
	( 2,20 ), 
	( 2,23 ), 
	( 2,26 ), 
	( 2,29 ), \\ 
	( 3,4 ), 
	( 3,5 ), 
	( 3,6 ), 
	( 3,7 ), 
	( 3,8 ), 
	( 3,10 ), 
	( 3,11 ), 
	( 3,13 ), 
	( 3,14 ), 
	( 4,5 ), 
	( 4,6 ), 
	( 4,7 ), \\ 
	( 4,8 ), 
	( 4,10 ), 
	( 4,11 ), 
	( 4,13 ), 
	( 4,14 ), 
	( 5,6 ), 
	( 5,7 ), 
	( 5,8 ), 
	( 5,10 ), 
	( 5,11 ), 
	( 5,13 ),\\  
	( 5,14 ), 
	( 5,17 ), 
	( 5,19 ), 
	( 5,20 ), 
	( 5,23 ), 
	( 5,26 ), 
	( 7,8 ),  
	( 7,10 ), 
	( 7,11 ), 
	( 7,13 ), 
	( 7,14 ), \\ 
	( 7,17 ), 
	( 8,10 ),
	( 8,11 ), 
	( 8,13 ), 
	( 8,14 ), 
	( 8,17 ), 
	( 11,13 ), 
	( 11,14 ), 
	( 11,17 ), 
	( 14,17 )\, 
	\end{array}
	\right\},
	\end{align*}
	and the set $ \widehat{A} $ associated with $ A $ is defined by $ \widehat{A} := \{\,(j,i)\,\big|\,(i,j) \in A \, \big\} $. Additionally the $ 182  $ pure gaps are in the set $ G_0(\mathcal{Q}_1 ,\mathcal{P}_1) $ given by
	\begin{align*}
	B \cup \widehat{B} \cup \Big\{\,  ( 1,1 ), 
	( 2,2 ), 
	( 3,3 ), 
	( 4,4 ), 
	( 5,5 ), 
	( 6,6 ), 
	( 7,7 ), 
	( 10,10 ), 
	( 13,13 ), 
	( 15,15 ) \,\Big\},
	\end{align*}
	where the set $ \widehat{B} $ is associated with $ B $ and
	\begin{align*}
	B := \left\{
	\begin{array}{ll}
	( 1,2 ), 
	( 1,3 ), 
	( 1,4 ), 
	( 1,5 ), 
	( 1,6 ), 
	( 1,7 ), 
	( 1,8 ), 
	( 1,10 ), 
	( 1,11 ), 
	( 1,13 ), 
	( 1,14 ), \\
	( 1,15 ), 
	( 1,16 ), 
	( 1,17 ), 
	( 1,19 ), 
	( 1,20 ), 
	( 2,3 ), 
	( 2,4 ), 
	( 2,5 ), 
	( 2,6 ), 
	( 2,7 ), 
	( 2,8 ), \\
	( 2,10 ), 
	( 2,11 ), 
	( 2,13 ), 
	( 2,14 ), 
	( 2,15 ), 
	( 2,16 ), 
	( 2,17 ), 
	( 2,19 ), 
	( 2,20 ), 
	( 2,25 ), \\
	( 2,26 ), 
	( 2,29 ), 
	( 3,4 ), 
	( 3,5 ), 
	( 3,6 ), 
	( 3,7 ), 
	( 3,8 ), 
	( 3,10 ), 
	( 3,11 ), 
	( 3,13 ), 
	( 3,14 ), \\
	( 3,15 ), 
	( 3,16 ), 
	( 3,17 ), 
	( 3,19 ), 
	( 3,20 ), 
	( 3,25 ), 
	( 3,26 ), 
	( 4,5 ), 
	( 4,6 ), 
	( 4,7 ), 
	( 4,8 ), \\
	( 4,10 ), 
	( 4,11 ), 
	( 4,13 ), 
	( 4,14 ), 
	( 4,15 ), 
	( 4,16 ), 
	( 4,17 ), 
	( 4,19 ), 
	( 5,6 ), 
	( 5,7 ), 
	( 5,8 ), \\
	( 5,10 ), 
	( 6,7 ), 
	( 6,8 ), 
	( 6,10 ), 
	( 6,13 ), 
	( 6,14 ), 
	( 6,15 ), 
	( 6,16 ), 
	( 6,17 ), 
	( 6,19 ), 
	( 6,25 ), \\ 
	( 7,8 ), 
	( 7,10 ), 
	( 7,13 ), 
	( 7,14 ), 
	( 7,15 ), 
	( 7,16 ), 
	( 10,13 ), 
	( 13,15 ), 
	( 13,16 ), 
	( 15,16 ) \,
	\end{array}
	\right\}.
	\end{align*}
	We can verify that these results coincide with Theorem \ref{lem:Weierstsemi2} and Corollary \ref{cor:onepoint}.
\end{example}


\end{document}